\newtheorem{theorem}{Theorem}[section]
\newtheorem{lemma}[theorem]{Lemma}
\newtheorem{proposition}[theorem]{Proposition}
\newtheorem{corollary}[theorem]{Corollary}
\newtheorem{fact}[theorem]{Fact}
\newtheorem{lem}{Lemma}[chapter]
\theoremstyle{definition}
\newtheorem{definition}[theorem]{Definition}
\newtheorem{example}[theorem]{Example}
\theoremstyle{remark}
\newtheorem{remark}[theorem]{Remark}
\newtheorem{rmk}[lem]{Remark}
\numberwithin{section}{chapter}
\numberwithin{equation}{chapter}
\def\mc#1{\mathcal{#1}}
\def\mb#1{\mathds{#1}}  
\def\tx#1{{\rm #1}}     
\newcommand{\R}{\mathds{R}}
\newcommand{\C}{\mathds{C}}
\newcommand{\Q}{\mathds{Q}}
\newcommand{\A}{\mathds{A}}
\newcommand{\Z}{\mathds{Z}}
\newcommand{\lmod}{\setminus}
\newcommand{\Gal}{\Gamma} 
\newcommand{\ul}[1]{\underline{#1}}
\def\from{\leftarrow}
\def\into{\hookrightarrow}
\def\onto{\twoheadrightarrow}
\newcommand{\ov}{\overline}
\newcommand{\lra}{\longrightarrow}
\newcommand{\lla}{\longleftarrow}
\newcommand{\ab}{\tx{ab}}
\newcommand{\ssc}{\tx{sc}}
\newcommand{\ad}{\tx{ad}}
\newcommand{\loc}{\tx{loc}}
\newcommand{\vbar}{{\overline{v}}}
\newcommand{\cH}{{\mc{H}}}
\newcommand{\cF}{{\mc{F}}}
\newcommand{\V}{V}
\newcommand{\Vbar}{{\,\ov \V\,}}
\newcommand{\G}{{\Gamma}}
\newcommand{\GEF}{\G}
\newcommand{\KK}{{\sf K}}
\newcommand{\GG}{{\mathds G}}
\newcommand{\Lam}{\Lambda}
\newcommand{\vk}{{\varkappa}}
\newcommand{\Fbar}{{\ov F}}
\DeclareMathOperator{\coker}{coker}
\DeclareMathOperator{\im}{im}
\newcommand{\tf}{{\rm tf}}
\DeclareMathOperator{\Tor}{Tor}
\DeclareMathOperator{\Hom}{Hom}
\DeclareMathOperator{\Ind}{Ind}
\newcommand{\SL}{{\rm SL}}
\DeclareMathOperator{\Ext}{Ext}
\newcommand{\fstar}{{ \framebox{\makebox[\totalheight]{$*$}} }}
\newcommand{\Tors}{\tx{Tors}}
\newcommand{\tor}{{\tx{Tors}}}
\newcommand{\sub}[1]{{(#1)}}
\newcommand{\sss}[1]{{(#1)}}
\newcommand{\bv}{{\breve v}}
\newcommand{\Gv}{{\G\sss{\hm\bv}}}
\newcommand{\Gt}{{\G\hm,\Tors}}
\newcommand{\Gvt}{{\Gv,\Tors}}
\newcommand{\Gtf}{{\G\hm,\hs\tf}}
\newcommand{\vp}{{\varphi'}}
\newcommand{\Mtf}{M_\tf}
\newcommand{\Ab}{{\tx{Ab}}}
\newcommand{\mtil}{{\tilde m}}
\newcommand{\cmp}{\complement}
\newcommand{\Tbul}{{T^\bullet}}
\DeclareMathOperator{\id}{id}
\DeclareMathOperator{\Aut}{Aut}
\newcommand{\lcm}{{\rm lcm}}
\DeclareMathOperator{\Cor}{Cor}
\newcommand{\FF}{{\mathds F}}
\newcommand{\pp}{{\mathfrak p}}
\newcommand{\FP}{{\mathcal{FP}}}
\newcommand{\MM}{{\bf M}}
\newcommand{\li}{{\,\labelt{\,\iota_*}\,}}
\newcommand{\lj}{{\,\labelt{\,\vk_*}\,}}
\newcommand{\rsa}{\rightsquigarrow}
\newcommand{\GF}{{\Gal(F^s\hm/F)}}
\newcommand{\phitil}{{\widetilde\phi}}
\newcommand{\res}{{\rm Res}}
\newcommand{\eff}{{\rm eff}}
\newcommand{\D}{\Delta}
\newcommand{\PGL}{{\rm PGL}}
\newcommand{\vphi}{\varphi}
\newcommand{\ups}{\upsilon}
\newcommand{\hs}{\kern 0.8pt}
\newcommand{\hssh}{\kern 1.2pt}
\newcommand{\hm}{\kern -0.8pt}
\newcommand{\hmm}{\kern -1.2pt}
\newcommand{\labelt}[1]{\xrightarrow{\makebox[1.em]{\scriptsize ${#1}$}}}
\newcommand{\labelto}[1]{\xrightarrow{\makebox[1.5em]{\scriptsize ${#1}$}}}
\newcommand{\labeltoo}[1]{\xrightarrow{\makebox[2em]{\scriptsize ${#1}$}}}
\newcommand{\labeltooo}[1]{\xrightarrow{\makebox[2.7em]{\scriptsize ${#1}$}}}
\newcommand{\isoto}{\xrightarrow{\sim}}
\newcommand{\longisot}{{\labelt\sim}}
\newcommand{\longisoto}{{\labelto\sim}}
\newcommand{\emm}{\bfseries}
\newcommand{\abcal}{{\mathcal{A}b}}
\newcommand{\Res}{{\rm Res}}
\newcommand{\Dt}{{\D,\Tors}}
\newcommand{\proj}{{\rm proj}}
\newcommand{\BarC}{{\mathrm{Bar}}}
\newcommand{\g}{{\gamma}}
\newcommand{\upgam}{{\gamma\hs}}
\newcommand{\Maps}{{\mathrm{Maps}}}
\DeclareTextFontCommand{\textcyr}{\fontencoding{OT2}
    \fontfamily{wncyr}\fontseries{m}\fontshape{n}\selectfont}
\newcommand{\Sha}{{\!\textcyr{Sh}}}
\begin{document}

\frontmatter

\title[Galois cohomology of reductive groups]
{Galois cohomology of reductive groups\\ over global fields}

\author{Mikhail Borovoi}

\address{Raymond and Beverly Sackler School of Mathematical Sciences,
Tel Aviv University, 6997801 Tel Aviv, Israel}
\email{borovoi@tauex.tau.ac.il}

\thanks{Borovoi was partially supported
by the Israel Science Foundation (grant 1030/22).}

\author{Tasho Kaletha\\{ \small with Appendix A  by} \\  Vladimir Hinich}

\address{Department of Mathematics, University of Michigan,
530 Church Street, Ann Arbor, MI 48109, USA}
\curraddr{Mathematics Center, Universit\"at Bonn, Endenicher Allee 60, 53115 Bonn, Germany}
\email{kaletha/math/uni-bonn/de}

\thanks{Kaletha was partially supported by NSF grants DMS-1801687 and DMS-2301507 and by a Simons Fellowship.}


\address{Department of Mathematics, University of Haifa, Mount Carmel,
Haifa 3498838, Israel}
\email{hinich@math.haifa.ac.il}

\date{7 May 2025}


\keywords{Galois cohomology, reductive groups, global fields, stable derived category, Tate hypercohomology, global Tate class}

\subjclass[2020]{
  11E72
, 20G10
, 20G25
, 20G30
}

\begin{abstract}
We give closed formulas for the abelian Galois cohomology groups
$H^1_{\rm ab}(F,G)$ and $H^2_{\rm ab}(F,G)$
of a connected reductive group $G$ over a global field $F$ in terms of the algebraic fundamental group $\pi_1(G)$ introduced earlier by one of us (M.B.).
We further give closed formulas for the effects of restriction, corestriction, and localization,
 in terms of these formulas and the analogous known formulas in the case of local fields.
 Building on this, we give formulas, suitable for computer computations,
 for the first nonabelian Galois cohomology set $H^1(F,G)$ of $G$
 and for the second Galois cohomology group $H^2(F,T)$ of an $F$-torus $T$.

As a preparation for the derivation of our formulas,
we review the interpretation of Tate cohomology of a finite group
in terms of the stable derived category of ${\mathbb Z}[\Gamma]$-modules due to Buchweitz,
and relate it to the explicit definition via cochains due to Kottwitz--Shelstad.
We use this to construct the Tate--Nakayama isomorphisms for bounded complexes of tori over local and global fields,
whose specialization to complexes of length $2$ is then applied to obtain the desired formulas.
\end{abstract}

\maketitle

\tableofcontents

\chapter*{Acknowledgements}
The authors thank Dylon Chow, Cristian D.~Gonz\'alez-Avil\'es,
Willem A.~de Graaf,  Robert E.~Kottwitz, Boris \`E.~Kunyavski\u{\i}, and Zev Rosengarten
for helpful comments and email correspondence.
We thank Kasper Andersen,  Keith Conrad, and Will Sawin for answering Borovoi's questions in MathOverflow.
Special thanks to Vladimir Hinich for writing Appendix \ref{app:Hinich}.
We thank the anonymous referees for thorough refereeing and helpful comments and suggestions.

\mainmatter

\chapter{Introduction}

\addtocontents{toc}{\protect\setcounter{tocdepth}{0}}

\section{}
Let $F$ be a local or global field, $\ov F$ a fixed algebraic closure of $F$,
\,$F^s$ the separable closure of $F$ in $\ov F$, and $\G\sub F=\tx{Gal}(F^s\hm/F)$ the absolute Galois group of $F$.
Let $G$ be a (connected) reductive $F$-group (we follow the convention of SGA3, where reductive groups are assumed to be connected).
We refer to Serre's book \cite{SerGC} for the definition of the first Galois cohomology set
\[H^1(F,G)\coloneqq H^1\big(\tx{Gal}(F^s\hm/F),\,G(F^s)\big). \]
In this paper we give a combinatorial description of $H^1(F,G)$ when $F$ is a  global field,
in terms of the algebraic fundamental group $\pi_1(G)$ and the sets $H^1(F_v, G)$
for the real places $v$ of $F$ (when $F$ has no real places, the description only involves $\pi_1(G)\hs$).
The algebraic fundamental group $\pi_1(G)$, which we recall in \S\ref{ss:fund-gp},
was introduced earlier by one of us in \cite{Brv98},
and also by Merkurjev \cite{Merkurjev98} and Colliot-Th\'el\`ene \cite{CT08}.
It is a finitely generated abelian group equipped with an action of $\GF$.
We also rederive the known description of $H^1(F,G)$ in terms of $\pi_1(G)$
when $F$ is a non-archimedean local field, a result that goes back to work of Kottwitz.

\section{}
In the case when $F$ is a $p$-adic field
(a finite extension of the field of $p$-adic numbers $\Q_p$),
Kottwitz \cite{Kot84} noticed that
the set $H^1(F,G)$ has a canonical structure of an abelian group,
and he computed this abelian group in \cite[Proposition 6.4]{Kot84}
in terms of the center of a Langlands dual group for $G$.
One can rephrase Kottwitz's formula in terms of $\pi_1(G)$,
see \cite[Corollary 5.5(i)]{Brv98} or Labesse \cite[\S1.6]{Labesse99}.
The result, which is essentially \cite[Theorem 1.2]{Kot86},
is a functorial bijection $H^1(F,G) \to \pi_1(G)_{\G\sub F,{\Tors}}$,
where $\pi_1(G)_{\G\sub F,{\Tors}}$ denotes the torsion subgroup
of the group of $\G\sub F$-coinvariants of $\pi_1(G)$.
This bijection induces a functorial abelian group structure on $H^1(F,G)$, namely,
the unique one that turns this bijection into an isomorphism of groups.

\section{}
In the case  $F=\R$, the set $H^1(F,G)$ was computed in \cite{Borovoi88}
(see also \cite{Borovoi22-CiM})
using the method of Borel and Serre \cite[Theorem 6.8]{Borel-Serre},
and  in \cite[Theorem 7.14]{BorTim24}
using the method of Kac \cite{Kac69}.
There is a computer program computing $H^1(\R,G)$
(even when $G$ is not necessarily reductive or connected); see \cite{BG23}.
Note that over $F=\R$,  the set $H^1(F,G)$ for a reductive $F$-group $G$
cannot be computed in terms of $\pi_1(G)$ only.
While Kottwitz's map $H^1(F,G) \to \pi_1(G)_{\G\sub F,{\Tors}}$ from \cite[Theorem 1.2]{Kot86} still exists, in general
it is neither injective nor surjective. Instead, there is an exact sequence of pointed sets
\[ H^1(F,G_\tx{sc}) \to H^1(F,G) \to \pi_1(G)_{\G\sub F,{\Tors}} \to \pi_1(G)_{\Tors}, \]
where the last map is induced by the norm homomorphism for the action of $\G\sub{F}$ on $\pi_1(G)$.
Here $G_\ssc$ is the universal cover of the commutator subgroup $[G,G]$ of $G$.
Contrary to the case of non-archimedean $F$, the set $H^1(F,G_\tx{sc})$ need not be a singleton for $F=\R$,
and there is no functorial group structure on $H^1(F,G)$;
see \cite[Theorem 1.2]{Borovoi24-Group-structure}.

\section{}
Let $F$ be a global field, that is, a finite extension either of the field $\Q$
of rational numbers, or of the field $\mb{F}_q(t)$ of rational functions over a finite field.
Write $\V_F$ for the set of places of $F$.

For each place $v\in \V_F$ there is the canonical localization map
$\tx{loc}_v\colon H^1(F,G)\to H^1(F_v,G)$ where $F_v$ denotes the completion of $F$ at $v$.
Consider the product of the localization maps
\[ H^1(F,G)\to \prod_{v\in V_F} H^1(F_v,G).\]
It is well known that this map actually takes image in
\[\bigoplus_{v\in \V_F}\!\! H^1(F_v,G)\,\coloneqq\,
    \bigg\{ \big(\eta_v\big)_{v\in \V_F}\in\!\! \prod_{v\in \V_F}\!\! H^1(F_v,G)\ \,
    \bigg|\ \,\eta_v=1\ \text{for almost all}\ v\bigg\};\]
see, for instance, \cite[Corollary 4.6]{Brv98}.
Moreover, this image is known;
see Kottwitz \cite[Proposition 2.6]{Kot86} or \cite[Theorem 5.15]{Brv98}.

Consider the Tate--Shafarevich kernel
\[\Sha^1(F,G)\coloneqq \ker\bigg[ H^1(F,G)\to \bigoplus_{v\in \V_F} H^1(F_v,G)\bigg].\]
Sansuc \cite[Theorem 8.5]{Sansuc81} showed that the finite set $\Sha^1(F,G)$
has a functorial structure of an abelian group.
Kottwitz \cite[(4.2.2)]{Kot84} computed the group $\Sha^1(F,G)$
in terms of the center of a Langlands dual group for $G$.
Another, more functorial version of the formula of Kottwitz,
again in terms of $\pi_1(G)$, was given in
\cite[Proposition 4.13 and Corollary 5.13]{Brv98}.
Using twisting, one can show that $\Sha^1(F,G)$
naturally acts on the non-empty fibers of the localization  map
\[\loc\colon H^1(F,G)\to\bigoplus_{v\in\V_F} H^1(F_v,G),\]
and this action is simply transitive.
Thus one knows the image and the fibers of this localization map.
In this paper we compute functorially  the set $H^1(F,G)$.
Moreover, for an $F$-torus $T$, we compute functorially $H^2(F,T)$.

\section{}
\label{ss:E/F}
Let $G$ be a reductive group over a global field $F$.
We write $M=\pi_1(G)$; then the absolute Galois group $\GF$ acts on $M$ via a finite quotient group.
We state our main results in terms of a sufficiently large {\em finite} Galois extension $E/F$.

Let $E$ be a finite Galois extension of $F$ in $F^s$ having no real places
and such that $\G(F^s/E)$ acts trivially on $M$.
Write $\GEF=\Gal(E/F)$, the Galois group of $E/F$, which is a finite group naturally acting on $M$.
Let $\V_E$ denote the set of places of $E$;
the Galois group $\G$ naturally acts on $\V_E$.
We compute $H^1(F,G)$ in terms of the action of $\G$ on the group $M$ and on the set $\V_E$,
using also the Galois cohomology sets $H^1(F_v,G)$ for the real places $v$ of $F$.

Consider the group of finite formal linear combinations
\[ M[\V_E]\coloneqq \Big\{\sum_{w\in \V_E}\! m_w\cdot w\ \Big|\ m_w\in M\Big\}. \]
The group $\GEF$ naturally  acts on $M[\V_E]$. We have a $\GEF$-equivariant homomorphism
\[ M[\V_E]\to M,\qquad \sum m_w\cdot w\,\mapsto \sum m_w \]
and we let $M[\V_E]_0$ denote its kernel.
We write $(M[\V_E]_0)_{\GEF}$ for the {\em group of coinvariants} of $\GEF$ in $M[\V_E]_0$\hs,
and we write
\[ (M[\V_E]_0)_\Gt\coloneqq \big(\hs(M[\V_E]_0)_\GEF\big)_\tor \]
for the torsion subgroup of $(M[\V_E]_0)_{\G}$\hs.
Similarly, we write
\[ \big(M[\V_E]_0\otimes\Q/\Z\big)_\G \]
for the group of coinvariants of $\G$ in $M[\V_E]_0\otimes\Q/\Z$
where the tensor product is taken over $\Z$.
Note that there is a canonical isomorphism
$\big(M[\V_E]_0\big)_\G\otimes\Q/\Z=\big(M[\V_E]_0\otimes\Q/\Z\big)_\G$\hs.
For each place $v$ of $F$ we choose a place $\bv$ of $E$ over $v$,
and we denote by  $\Gv$ the stabilizer in $\G$ of $\bv$;
it can be identified with the Galois group $\tx{Gal}(E_\bv/F_v)$.
If $v$ is an archimedean place, then $E_\bv\simeq \C$,
because by assumption $E$ has no real places.
If $v$ is a real place of $F$, then $\Gv$ is of order 2,
and if $v$ is a complex place, then $\Gv=\{1\}$.

We recall that a commutative  diagram
\[
\xymatrix@C=5mm@R=5mm{
A\ar[r]\ar[d] & B\ar[d]\\
C\ar[r] & D
}
\]
is called Cartesian if it identifies $A$ with the fibered product of $B$ and $C$ over $D$.
We prove the following theorem:

\begin{theorem}\label{t1:main}\ \\[-10pt]
\begin{enumerate}

\item[\rm(1)] With the notation above, for a reductive group $G$ over a global field $F$
with fundamental group  $M=\pi_1(G)$,
the pointed set $H^1(F,G)$ fits into a functorial Cartesian diagram
\[ \xymatrix@C=16mm{
	H^1(F,G)\ar[r]\ar[d]&\prod\limits_{v|\infty} H^1(F_v,G)\hskip -18mm \ar[d]\\
	(M[V_E]_0)_\Gt\ar[r]&\prod\limits_{v|\infty} M_\Gvt\hskip -16.5mm
}\]
where the products on the right run over the set of infinite places $v$ of $F$.

\item[\rm(2)] For a torus $T$ over a global field $F$, the abelian group $H^2(F,T)$
fits into  a functorial Cartesian diagram
\[ \xymatrix@C=16mm{
	H^2(F,T)\ar[r]\ar[d]&\prod\limits_{v|\infty}  H^0(\Gv, M)\hskip-20.5mm\ar[d]\\
	\hskip-2.5mm(M[V_E]_0 \otimes \Q/\Z)_{\G}\ar[r]&\prod\limits_{v|\infty} (M \otimes \Q/\Z)_\Gv\hskip-24mm
}\]
where $M=\pi_1(T)\coloneqq X_*(T)$ is the cocharacter group of $T$,
and  $ H^0(\Gv, M)$ denotes the {\em Tate cohomology} of the finite group $\Gv$ of order dividing 2.

\item[\rm(3)] If $F$ has no real places, that is, $F$ is a global function field or a totally imaginary number field,
 then we have a functorial bijection
\[H^1(F,G)\isoto (M[V_f(E)]_0)_\Gt\quad\text{with}\ \, M=\pi_1(G)\]
and a functorial isomorphism
\[H^2(F,T)\isoto (M[V_f(E)]_0 \otimes \Q/\Z)_\G\quad\text{with}\ \, M=\pi_1(T)=X_*(T)\]
where $V_f(E)$ denotes the set of finite (that is, non-archimedean) places of $E$.
\end{enumerate}
\end{theorem}

Assertion (1) is Theorem \ref{thm:main}. It follows from Theorem \ref{t1:H1-H2-ab}  and  Theorem \ref{t:Brv-GA}.
Assertion (2) follows immediately from Theorem \ref{t1:H1-H2-ab}.
Assertion (3) follows from Corollary \ref{c:tot-imaginary}.
For description of the maps we refer the reader to the body of the paper,
where we show that the diagrams above do not depend on the choice of the extension $E/F$
and that they are compatible with restriction (corresponding to a finite separable extension $F'/F$), localization, and connecting homomorphisms.

\begin{remark} When $G=T$ is a torus, assertion (1) of  Theorem \ref{t1:main}
follows from the result of Tate \cite{Tate66}.
Indeed, since $T$ splits over $E$, we may identify $H^1(F,T)=H^1(E/F, T)$.
By Tate  \cite{Tate66} we have a canonical isomorphism
$$H^1(E/F,T)\cong H^{-1}(\G,M[V_E]_0)$$
where $M=X_*(T)=\pi_1(T)$.
Finally, since $M[V_E]_0$ is torsion-free, we have
$$H^{-1}(\G,M[V_E]_0)=(M[V_E]_0)_\Gt\hs,$$
and we obtain a canonical isomorphism
\[ H^1(F,T)=H^1(E/F, T)\isoto H^{-1}(\G,M[V_E]_0)= (M[V_E]_0)_\Gt\hs. \]
\end{remark}

\begin{remark}
For a general connected reductive group $G$ over a global field $F$, assertion (1) of Theorem \ref{t1:main}
can be also deduced from a result of Kottwitz's preprint  \cite{KotBG}.
Using Galois gerbes,
Kottwitz defines pointed sets $B(F,G)_\tx{bsc}$ and $B(F_v,G)_\tx{bsc}$, equipped with maps to $\big(M[\V_E]_0\big)_\G$ and $M_\Gv$, respectively.
He identifies $B(F,G)_\tx{bsc}$ with the fiber product
of $\big(M[\V_E]_0\big)_\G$ and $\prod_{v|\infty} B(F_v,G)_\tx{bsc}$
over $\prod_{v|\infty} M_\Gv$  (\hs\cite[Proposition 15.1]{KotBG}\hs).
The Galois cohomology set $H^1(F,G)$ embeds into $B(F,G)_\tx{bsc}$ and is equal to the preimage of the torsion subgroup
$\big(M[V_E]_0\big)_\Gt\subseteq \big(M[V_E]_0\big)_\G$\hs. This identifies $H^1(F,G)$ with the fiber product
of $\big(M[\V_E]_0\big)_\Gt$ and $\prod_{v|\infty} H^1(F_v,G)$
over $\prod_{v|\infty} M_\Gvt$ (our Theorem \ref{t1:main}(1)\hs).
The proof we give here is different from Kottwitz's and does not involve Galois gerbes.
Rather, it proceeds by computing the abelianized Galois cohomology
$H^i_\tx{ab}(F,G)$ for all $i>0$; see \S\ref{ss:abelian} and \S\ref{ss:comp-abelian}.
Moreover, our construction of the map $H^1(F,G) \to (M[V_E]_0)_{\Gamma,\tx{Tors}}$ is more direct,
as it does not involve taking $z$-extensions.
Instead, we systematically use Galois hypercohomology of bounded complexes.
\end{remark}

\begin{remark}
When $G$ is a simply connected semisimple group, we have $M=\pi_1(G)=0$,
and Theorem \ref{t1:main}(1) gives a bijection
\[ H^1(F,G)\isoto \prod\limits_{v|\infty} H^1(F_v,G),\]
which is the celebrated Hasse principle of Kneser, Harder, and Chernousov.
We do not give a new proof of this result; we use it in our proof of Theorem \ref{t1:main}.
See Remark \ref{r:HPs5}.
\end{remark}

\begin{remark}
By Remark \ref{r:xi},
the right-hand vertical arrow in the diagram of assertion (2) of  Theorem \ref{t1:main}
is injective. Therefore, the diagram shows that $H^2(F,T)$ is isomorphic to a subgroup of
$(M[V_E]_0 \otimes \Q/\Z)_{\G}$.
\end{remark}

\section{}
For a reductive group $G$ over a global field $F$, Theorem \ref{t1:main}(1)
describes the infinite set $H^1(F,G)$ in terms of the finite
real Galois cohomology sets $H^1(F_v,G)$ and the infinite abelian group $(M[\V_E]_0)_\Gt$.
From Theorem  \ref{t1:main}(1) we deduce the following description of  $H^1(F,G)$
in terms of a finite set and finite groups.

Let $S_F\subset V_F$ be a finite subset,
and let $S_E\subset V_E$ denote the preimage of $S_F$ in $V_E$.
Write $S_F^\cmp=V_F\smallsetminus S_F$, $S_E^\cmp=V_E\smallsetminus S_E$.
We can and will choose $S_F$ in such a way
that $S_F$ contains all infinite places of $F$ and that
for any (finite) place $v\in S_F^\cmp$
there exists a {\em finite} place $w_S(\bv)$ such that $\G(w_S(\bv))\supseteq \G(\bv)$,
where $\G(w_S(\bv))$ and $\G(\bv)$ are the corresponding
decomposition groups (stabilizers in $\G$).
Then we can construct a $\G$-equivariant map
\[ \phi\colon S_E^\cmp\to S_E,\quad\ \gamma\cdot\bv\mapsto \gamma\cdot  w_S(\bv)\quad \ \text{for}\ \,\gamma\in\G.\]

We denote by $\cH^1_S(G)$ the fiber product of the maps
\[\big([M[S_E]_0\big)_\Gt\lra \prod_{v|\infty} M_\Gvt\longleftarrow \prod_{v|\infty}H^1(F_v,G)\]
where the arrows are natural maps (see Chapter \ref{sec:explicit}).

\begin{theorem}[Theorem \ref{t:explicit}]
\label{t:non-can}
For $G$ and $S_F$ as above, there is a bijection
\[H^1(F,G)\isoto \cH^1_S(G)\times\bigoplus_{v\in S_F^\cmp}M_\Gvt\hs.\]
\end{theorem}

Note that the bijection in the theorem \ref{t:non-can} is not canonical:
it depends on the choice of $\phi$.

In Theorem \ref{t:non-can}, the finite set $\cH^1_S(G)$
can be computed using a computer,
and the finite groups $M_\Gvt$ for $v\in  S_F^\cmp$
can be easily computed by hand or using a computer.
This makes it possible to use a computer for computing the Galois cohomology
of reductive groups over global fields.
We use our results for computations with Tate--Shafarevich groups
regarded as subgroups of cohomology groups.
We give an  example of a semisimple group $G$ over a number field $F$,
for which with the help of computer we show that the Tate--Shafarevich group
$\Sha^1_\ab(F,G)$  is not a direct summand of $H^1_\ab(F,G)$ (Proposition \ref{p:using-computer}).
Moreover,  we prove the following theorem:
\begin{theorem}[Corollary \ref{c:not-direct}]
\label{t:not-direct-intro}
Let $T$ be a torus over a global field $F$.
If $\Sha^2(F,T)$ is nontrivial,
then it is not a direct summand of $H^2(F,T)$.
\end{theorem}

\section{}
\label{ss:abelian}
The proof of Theorem \ref{t1:main} is based on the computation of the abelian Galois cohomology $H^i_\tx{ab}(F,G)$,
the definition of which is recalled in \S\ref{ss:fund-gp}.
This cohomology is available for all $i \geq 0$ and is an abelian group.
We compute this abelian group in terms of $\pi_1(G)$ for all $i >0$.

\begin{theorem}[Theorem \ref{thm:tnglob}]
\label{t1:H1-H2-ab}
	For a reductive group $G$ over a global field $F$ with algebraic fundamental group $M=\pi_1(G)$,
there are functorial Cartesian diagrams
\[
\xymatrix@C=43pt{
	H^1_\tx{ab}(F,G)\ar[r]\ar[d]&\prod\limits_{v|\infty}  H^{-1}(\Gv,M)\hskip-67pt\ar[d]\\
	(M[\V_E]_0)_\Gt\ar[r]&\prod\limits_{v|\infty} M_\Gvt\hskip-45pt
}\hskip 35pt
\]
\[
\xymatrix@C=35pt{
	H^2_\tx{ab}(F,G)\ar[r]\ar[d]&\prod\limits_{v|\infty}  H^0(\Gv,M)\hskip-62
pt\ar[d]\\
	(M[\V_E]_0 \otimes \Q/\Z)_{\G}\ar[r]&\prod\limits_{v|\infty} (M \otimes \Q/\Z)_\Gv\hskip-70pt
}\hskip 40pt
\]
Moreover, for $i>2$ there are functorial isomorphisms
\[ H^i_\tx{ab}(F,G)\,  \isoto\, \prod_{v|\infty} H^{i-2}(\Gv,M).\]
Furthermore, if $F$ has no real places, then there are functorial isomorphisms
\[ H^1_\tx{ab}(F,G)\isoto \big(M[\V_f(E)]_0\big)_\Gt\quad   \text{and} \quad
   H^2_\tx{ab}(F,G)\isoto \big(M[\V_f(E)]_0 \otimes \Q/\Z\big)_{\G}\,.
\]
\end{theorem}

We also describe the compatibility of these Cartesian diagrams  with respect to restriction,
corestriction, localization, and connecting homomorphisms.

\section{}
\label{ss:comp-abelian}
The computation of the abelian cohomology of $G$ goes as follows.
Consider the composite homomorphism
$\rho\colon G_\ssc\onto [G,G]\into G$.
Then $G_\tx{sc} \to G$ is a {\em symmetrically braided crossed module}
(see \S\ref{ss:stable-crossed} for the definition), and
we set  $H^i_\tx{ab}(F,G)=H^i(F,G_\tx{sc} \to G)$.
Given a  maximal torus $T\subseteq G$, we set $T_\ssc=\rho^{-1}(T)$.
The natural map $[T_\tx{sc} \to T] \to [G_\tx{sc} \to G]$ is
a quasi-isomorphism of crossed modules,
and this leads to the natural identification $H^i_\tx{ab}(F,G)=H^i(F,T_\tx{sc} \to T)$.
This latter cohomology group  is the hypercohomology group
of the complex of tori $(T_\ssc\to T)$ placed in degrees $-1$ and $0$.
By construction we have
$$H^i(F,T_\tx{sc} \to T)=\varinjlim_E H^i(E/F,T_\tx{sc} \to T),$$
where the colimit is taken over the set of finite Galois extensions $E/F$ in $F^s$,
and $H^i(E/F,T_\tx{sc} \to T)\coloneqq H^i\big(\G\sub{E/F},T_\tx{sc}(E) \to T(E)\big)$.
The group $$H^i\big(\G\sub{E/F},T_\tx{sc}(E) \to T(E)\big)$$ is in turn identified,
by the global Tate--Nakayama isomorphism defined by Tate in \cite{Tate66} for tori and extended
in \S\ref{sub:tiso} of this paper to bounded complexes of tori, with the group
\[ H^{i-2}\big(\G\sub{E/F},\,X_*(T_\tx{sc})[\V_E]_0\! \to\! X_*(T)[\V_E]_0\big) \]
where by $X_*(T)$ we denote the cocharacter group of $T$,
and $ H^{i-2}$ denotes the Tate (hyper)cohomology.
Since the homomorphism $X_*(T_\tx{sc}) \to X_*(T)$
is injective with cokernel $M=\pi_1(G)$, we have
\[ H^{i-2}\big(\G\sub{E/F},\,X_*(T_\tx{sc})[\V_E]_0
   \! \to\! X_*(T)[\V_E]_0\big) =  H^{i-2}(\G\sub{E/F},M[\V_E]_0). \]
This reduces the computation of $H^i_\tx{ab}(F,G)$ to the computation
of the transition maps in the directed system $ H^{i-2}(\G\sub{E/F},M[\V_E]_0)$
and of the colimit of that system.

\section{}
The paper is structured as follows.
At the end of this chapter  we explain our notation and terminology.
In Chapter \ref{sec:tcbc} we review briefly the definition and properties
of Tate hypercohomology of bounded complexes of $\G$-modules, for an arbitrary finite group $\G$.
This is done abstractly via the stable derived category of $\Z[\G]$-modules,
following the ideas of Buchweitz \cite{Buchweitz86}, which have by now been expounded in various references, such as \cite{Krause22}.
This abstract approach has the advantage that many useful properties follow
almost immediately
from the well-known properties of Tate cohomology of $\Z[\G]$-modules.
We then provide explicit formulas that allow us to compute Tate hypercohomology and restriction and corestriction maps by means of cochains.
These explicit formulas actually apply not only to bounded, but also to \emph{unbounded} complexes, and can be used to define and compute Tate hypercohomology in this more general setting as well, although we will have no need for that in this paper.

In Chapter \ref{sec:tniso} we discuss the local and global Tate classes following \cite{Tate66} and review the local and global Tate--Nakayama isomorphisms in the context of bounded complexes of tori. We further discuss the compatibility of these isomorphisms with restriction, corestriction, localization, and connecting homomorphisms.

In Chapter \ref{s:ab-coho} we perform the key computations of this paper. We specialize the discussion of Chapter \ref{sec:tniso} to the case of
a complex of tori $T^{-1} \to T^0$ of length $2$, placed in degrees $-1$ and $0$,
such that the kernel $T^{-1} \to T^0$ is finite.
This is equivalent to the homomorphism $X_*(T^{-1}) \to X_*(T^0)$ being injective.
Let $M$ denote the cokernel of this injective homomorphism.
In this setting, when $K/E/F$ is a tower of finite Galois extensions and $i=1,2$,
we compute the homomorphism of Tate cohomology
\begin{equation}\label{e:not-inf}
H^{i-2}(\G\sub{E/F},M[\V_E]_0) \to  H^{i-2}(\G\sub{K/F},M[\V_K]_0)
\end{equation}
that corresponds to the inflation homomorphism
\[  H^i(E/F,\, T^{-1} \to T_0) \to H^i(K/F,\, T^{-1} \to T^0) \]
under the global Tate isomorphisms.
(Note that this homomorphism \eqref{e:not-inf} is not an inflation homomorphism
because there are no inflation homomorphisms in Tate cohomology $H^n$ for $n\le 0$. Its description is more subtle.)
\ Then we compute the colimit of the groups $H^{i-2}(\G\sub{E/F},M[\V_E]_0)$
over the finite Galois extensions $E/F$.

In Chapter \ref{sec:red} we deal with the Galois cohomology of reductive groups.
After reviewing the abelian Galois cohomology groups $H^n_\tx{ab}(F,G)$
and their relation to the usual Galois cohomology $H^1(F,G)$,
we use the results of Chapter \ref{s:ab-coho} to compute
$H^n_\ab(F,G)$ for $n\ge 1$
for a reductive group $G$ over a global field $F$
in terms of the algebraic fundamental group $\pi_1(G)$,
and to compute $H^1(F,G)$ in terms of
$\pi_1(G)$ and the real Galois cohomology sets $H^1(F_v,G)$ for the real places $v$ of $F$.

In  Chapter \ref{sec:explicit} we give explicit versions
(Theorems \ref{t:FS} and \ref{t:explicit}) of the results of Chapter  \ref{sec:red}.
In  Chapter \ref{sec:Shafarevich} we use the results of Chapter \ref{sec:explicit}
for computations with Tate--Shafarevich kernels.

In Appendix \ref{app:Hinich}, for
a {\em finite} group $\G$ and a short exact sequence of $\G$-modules
$0\to B'\to B\to B''\to 0,$
Vladimir Hinich defines and investigates  an   exact sequence in group homology
containing $B_\Gt$ and $(B\otimes\Q/\Z)_\G$\hs.

In Appendix \ref{app:crossed} we recall the definition of the first  group cohomology set  with coefficients in a crossed module.
In Appendix \ref{app:duality} we prove some duality assertions, and in Appendix \ref{app:fiber}
we prove a lemma dealing with fiber products.
Appendix \ref{app:cheat-sheet} is a cheat sheet for Tate cohomology in low degrees.
The arXiv version \cite{BorKal23} of this paper contains Appendix F, which is the listing of a computer program and its output;
 see \S\ref{ss:computer}.

In the body of the paper, besides dealing with global fields $F$,
we also deal with non-archimedean local fields, and rederive classical results of Kottwitz.

\addtocontents{toc}{\protect\setcounter{tocdepth}{1}}

\section*{Notation and terminology}
\label{sec:notation}

\renewcommand{\thefootnote}{\ensuremath{\fnsymbol{footnote}}}

\begin{itemize}

\item
$\Z,\,\Q,\,\R,\,\C$ denote the ring of integers and the fields
of rational numbers, of real numbers, and of complex numbers, respectively.

\item For a field $F$, we denote by $\overline F$ a fixed algebraic closure of $F$,
and by $F^s$ a separable closure of $F$ in $\overline F$.
When considering a finite separable extension $F'/F$, a Galois extension $E/F$,
or a tower of Galois extensions $K/E/F$,
we tacitly assume that $F',E,K\subset F^s$.

\item For a Galois extension $E/F$, we write $\G(E/F)$ for the Galois group of $E/F$.

\item When $F$ is a global field, we denote by $V_F$ or $V(F)$ the set of places of $F$.
We denote by $V_f(F)$, $V_\infty(F)$, and $V_\R(F)$
the sets of finite (non-archimedean) places,
of infinite (archimedean) places, and of real places, respectively.
In particular, when $F$ is a global function field,
the sets $V_\infty(F)$ and $V_\R(F)$ are empty.

\item When $E/F$ is a Galois extension of global fields with
Galois group $\G=\G(E/F)$, and when $w\in V_E$,
we write $\G\sss{w}=\G(E/F,w)$ for the stabilizer of $w$ in $\G$
with respect to the natural action of $\G$ on $\V_E$.
\footnote{ We write $\G\sss{w}$  and  $\G\sub{E/F,w}$
rather than  $\G\hm_w$ and $\G_{E/F,w}$ in order to avoid
numerous double subscripts in our formulas and commutative diagrams.}

\item When $E/F$ is a finite Galois extension of global fields,
we choose a section $v\mapsto \bv\colon V_F\to V_E$
of the natural surjective map $V_E\to V_F$.
Similarly, we denote $\overline V\coloneqq V\sss{F^s}$
and choose a section $v\mapsto \vbar\colon V_F\to \overline V$
of the natural surjective map $\overline V\to V_F$.
Our results do not depend on these choices.

\item Let $A$ be an abelian group. We write $A_\Tors$
for the torsion subgroup of $A$, and we write  $A_\tf=A/A_\Tors$\hs.

\item Let $\G$ be a profinite group
and let $A$ be a $\G$-module, that is,
an abelian group written additively on which $\G$ acts continuously
with respect to the discrete topology on $A$.
As usual, we denote by $A^\G$ the subgroup of $\G$-invariants in $A$,
and by
\[ A_\G=A/\langle\gamma\cdot a-a\ |\ \gamma\in\G,\, a\in A\hs\rangle \]
the group of $\G$-coinvariants.
We write $A_\Gt=(A_\G)_\Tors$ and $A_\Gtf=(A_\G)_\tf$\hs.
We write $A^\vee$ for the $\Gamma$-module $\tx{Hom}_\Z(A,\Z)$.

\item If $\Gamma=\Gamma(E/F)$ is the Galois group of a finite extension of global fields
and $A$ is a $\Gamma$-module, we follow \S\ref{ss:E/F} and write $\Z[V_E]$
for the free abelian group on the set of places $V_E$ of $E$
equipped with the $\Gamma$-action induced from the $\Gamma$-action on $V_E$.
We write $A[V_E]=A\otimes_\Z\Z[V_E]$, and we write  $A[V_E]_0$
for the kernel of the augmentation map $A[V_E] \to A$
sending $\sum a_w\cdot w$ to  $\sum a_w$\hs.

\item $A\otimes B$ denotes $A\otimes_\Z B$,
the tensor product of two abelian groups $A$ and $B$ over $\Z$.

\item For a subgroup $A$ of an abelian group $B$,
we say that $A$ is a {\em direct summand} of $B$
if there exists a subgroup $C$ of $B$ such that $B=A\oplus C$.
(the internal direct sum of $A$ and $C$).

\item By an {\em exact} commutative diagram we mean a commutative diagram with exact rows and columns.

\end{itemize}

\chapter{Tate cohomology of bounded complexes} \label{sec:tcbc}

Let $\G$ be a finite group. It is well-known that the (usual, that is, not Tate) cohomology
of the group $\Gamma$, which can be constructed explicitly in terms of cochains,
can also be expressed using the (bounded) derived category $\mc{D}^b(\Gamma)$
of the category $\tx{Mod}(\Gamma)$ of all $\Z[\Gamma]$-modules.
Namely, the functor of invariants $\tx{Mod}(\Gamma) \to \tx{Mod}(\{1\})=\mc{A}$
that takes a $\Z[\Gamma]$-module $M$ to the abelian group $M^\Gamma$ of $\Gamma${\hyp}invariants
induces a functor between the derived categories $\mc{D}^b(\Gamma) \to \mc{D}^b(\mc{A})$.
The latter functor is better behaved than the former -- while the former is not exact,
in the sense that it does not map exact sequences to exact sequences, the latter is exact,
in the sense that it maps distinguished triangles to distinguished triangles.
The derived category $\mc{D}^b(\mc{A})$ comes equipped with cohomology functors
$H^i : \mc{D}^b(\mc{A}) \to \mc{A}$, and composing the derived invariants functor with $H^i$
leads to the $i$-th group cohomology functor for $\Gamma$.
The derived category $\mc{D}^b(\Gamma)$ comes equipped with an embedding
$\tx{Mod}(\Gamma) \to \mc{D}^b(\Gamma)$, and an equivalent description of the $i$-th group cohomology functor
is $\tx{Hom}_{\mc{D}^b(\Gamma)}(\Z,M[i])$, where $\Z$ is the trivial $\Gamma$-module,
and $[i]$ is the shift-functor on $\mc{D}^b(\Gamma)$.

A very useful byproduct of this interpretation of group cohomology is that it extends immediately from the case of individual $\Gamma$-modules to the case of bounded complexes of $\Gamma$-modules, which is classically called ``hypercohomology''.

In this chapter we will recall that a similar interpretation can be given to the Tate cohomology of $\G$. This originates in the work of Buchweitz \cite{Buchweitz86}, but has since been covered in other sources, such as \cite{CTVEZ} or \cite{Krause22}, and is based on the notion of the ``stable module category'', equivalently the ``stable derived category'', a certain quotient of the derived category. The $i$-th Tate cohomology group of the $\Gamma$-module $M$ is then given by $\tx{Hom}_{\ul{\mc{D}}^b(\Gamma)}(\Z,M[i])$, where $\ul{\mc{D}}^b(\Gamma)$ is the stable module category of $\Gamma$. As with usual cohomology, one of the benefits is that one obtains at once an extension to bounded complexes of $\Gamma$-modules.

For an introduction to derived categories we refer the reader to \cite[Chapter III]{GelfandManin} and for the relation to (usual) group cohomology we refer to \cite[Chapters 6 and 10, esp. \S10.7]{Weibel94}.

Since in this paper we will almost exclusively use Tate cohomology, our notation for Tate cohomology will be $H^i(\G,\,\cdot\, )$.

\section{The stable module category}

We give here three presentations of the stable derived category of $\Z[\G]$-mod\-ules, which we will denote by $\ul{\mc{D}}^b(\G)$. While the original treatment is given in  \cite{Buchweitz86}, we follow here the presentation of \cite{BeilinsonCFT}.

Let $\tx{Mod}(\G)$ denote the category of $\Z[\G]$-modules, and $\tx{mod}(\G)$ the subcategory of finitely generated (equivalently, finitely presented, since $\Z[\G]$ is Noetherian) modules. We write $\tx{Mod}_\tx{tf}(\G)$ for the subcategory of torsion-free modules, and $\tx{mod}_\tx{tf}(\G)=\tx{Mod}_\tx{tf}(\G) \cap \tx{mod}(\G)$.

Let $\ul{\mc{D}}^b_1(\G)$ denote the category whose objects are the torsion-free $\Z[\G]$-modules,
and given two such $M,N$, the set of morphisms is the abelian group quotient of the abelian group $\tx{Hom}_{\Z[\G]}(M,N)$ by the subgroup consisting of those morphisms that factor through a projective $\Z[\G]$-module, that is, homomorphisms of the form $M \to P \to N$ with $P$ projective. We could equivalently replace $P$ by a free $\Z[\G]$-module $F$. This has the effect of quotienting out the projective modules and leads to the ``stable module category'' of $\Z[\Gamma]$-modules.

Let $\ul{\mc{D}}^b_2(\G)$ denote the Verdier quotient (cf. \cite[13.6]{stacks-project})
of the bounded derived category $\mc{D}^b(\G)$ of $\Z[\G]$-modules by the full triangulated subcategory consisting of perfect complexes (bounded complexes of projective $\Z[\G]$-modules). This is the ``stable derived category'' of $\Z[\Gamma]$-modules.

Let $\ul{\mc{D}}^b_3(\G)$ denote the homotopy category of exact complexes of projective $\Z[\G]$-modules.

We now describe three functors
\[\ul{\mc{D}}^b_1(\G) \to \ul{\mc{D}}^b_2(\G) \to \ul{\mc{D}}^b_3(\G) \to \ul{\mc{D}}^b_1(\G).\]

The functor $\ul{\mc{D}}^b_1(\G) \to \ul{\mc{D}}^b_2(\G)$ is induced by the usual fully faithful embedding of an abelian category in its derived category. In other words, a torsion-free $\Z[\G]$-module $M$ is sent to the object in the derived category that is represented by $M$ considered as a complex concentrated in degree $0$.

To construct the functor $\ul{\mc{D}}^b_2(\G) \to \ul{\mc{D}}^b_3(\G)$, consider an object in $\ul{\mc{D}}^b_2(\G)$ represented by a bounded complex $M^\bullet$. Choose quasi-isomorphisms $M_l^\bullet \to M^\bullet$ and $M^\bullet \to M_r^\bullet$, where the complexes $M_l^\bullet$ and $M_r^\bullet$ consist of free $\Z[\G]$-modules, $M_l^\bullet$ is bounded above, and $M_r^\bullet$ is bounded below. The composition $M_l^\bullet \to M_r^\bullet$ is also a quasi-isomorphism, so its cone is an object of $\ul{\mc{D}}^b_3(\G)$.

The functor $\ul{\mc{D}}^b_3(\G) \to \ul{\mc{D}}^b_1(\G)$ sends an exact complex $M^\bullet$ of projective $\Z[\G]$-modules to $\tx{ker}(d^0_M\colon M^0 \to M^1)$.

One can impose the condition of finite generation on all modules, and thereby arrive at a full subcategory $\ul{\mc{D}}_i^{b,c} \subset \ul{\mc{D}}_i^b$.

The proofs that the above functors between the categories $\ul{\mc{D}}^{b,c}_i(\G)$ are equivalences of categories can be found in \cite[\S6.2]{Krause22}, in particular Theorem 6.2.5 and the sections following it. The arguments also work for $\ul{\mc{D}}^{b}_i(\G)$, and results are recorded in \cite{ChenXW11}.

We will regard these three categories as three different presentations of $\ul{\mc{D}}^b(\G)$ resp. $\ul{\mc{D}}^{b,c}(\G)$. The category $\ul{\mc{D}}^{b,c}(\G)$ coincides with the subcategory of compact objects in $\ul{\mc{D}}^b$, cf. \cite{ChenXW11} (note that a direct summand in $\tx{Mod}(\G)$ of a member of $\tx{mod}(\G)$ still lies in $\tx{mod}(\G)$).

The category $\ul{\mc{D}}^b(\G)$ is triangulated. This is evident from the presentation $\ul{\mc{D}}^b_2(\G)$.
In presentation $\ul{\mc{D}}^b_1(\G)$ the shift functor $M \to M[1]$ sends a torsion-free $\Z[\G]$-module $M$
to a $\Z[\G]$-module $N$ which sits in an exact sequence
\[ 0 \to M \to P \to N \to 0 \]
with $P$ a projective $\Z[\G]$-module.

The category $\ul{\mc{D}}^b(\G)$ has a contravariant idempotent endofunctor. In the presentation $\ul{\mc{D}}^b_1(\G)$ it sends a torsion-free $\Z[\G]$-module $M$ to $M^\vee=\tx{Hom}_\Z(M,\Z)$. We have $M[1]^\vee=M^\vee[-1]$. Note that $M^\vee$ is injective if and only if $M$ is projective, and vice versa.

\begin{fact} \label{fct:fsd}
The natural bilinear extension $\Z[\G] \otimes_\Z \Z[\G] \to \Z$
of the Kronecker delta symbol $\delta\colon \G \times \G \to \Z$ is a perfect pairing,
that is, it induces an isomorphism $\Z[\G] \to \tx{Hom}_\Z(\Z[\G],\Z)$.
In particular, every finitely generated free $\Z[\G]$-module is self-dual, projective, and injective.
\end{fact}

The category $\ul{\mc{D}}^b(\G)$ has a symmetric monoidal structure. In the presentation $\ul{\mc{D}}^b_1(\G)$ it is induced by the symmetric monoidal structure on the category of all $\Z[\G]$-modules, that is, the usual tensor product $M \otimes_\Z N$ of such modules.

\section{Tate cohomology}

We define for each $i \in \Z$ the functor
\[ H^i(\G,\,\cdot\, )\colon\, \ul{\mc{D}}^b(\G) \to \tx{Ab},\qquad M \mapsto \tx{Hom}_{\ul{\mc{D}}^b(\G)}(\Z,M[i]) \]
to the category of abelian groups $\Ab$.
If $M$ is a $\Z[\G]$-module, interpreted as an element of $\ul{\mc{D}}^b(\G)$ via the natural embedding $\tx{Mod}(\G) \to \mc{D}^b(\G)$ and the natural projection $\mc{D}^b(\G) \to \ul{\mc{D}}^b(\G)$, this coincides with the usual definition of Tate cohomology, cf. \cite[Proposition 2.6.2]{CTVEZ}.
Using the presentation $\ul{\mc{D}}_2^b(\G)$ for $\ul{\mc{D}}^b(\G)$ we obtain in this way an extension of Tate cohomology to bounded complexes of $\Z[\G]$-modules.
From this definition it is clear that Tate cohomology is compatible with shifting complexes, that an exact triangle induces a long exact cohomology sequence, and that quasi-isomorphisms of bounded complexes induce isomorphisms on Tate cohomology.

The symmetric monoidal structure on $\ul{\mc{D}}^b(\G)$ induces products on Tate cohomology, namely
\[ \tx{Hom}_{\ul{\mc{D}}^b(\G)}(\Z,M[i]) \otimes \tx{Hom}_{\ul{\mc{D}}^b(\G)}(\Z,N[j]) \to \tx{Hom}_{\ul{\mc{D}}^b(\G)}(\Z,(M\otimes N)[i+j]) \]
given by
\[ \Z = \Z \otimes_\Z \Z \to M[i] \otimes_\Z N[j] = (M \otimes N)[i+j]. \]
By uniqueness, these products coincide up to sign with the cup products
defined in terms of the standard resolution \cite[Chapter IV, \S7]{CasFro86}.

\section{Restriction and corestriction}
\label{ss:Res-Cor}
Consider a subgroup $\Delta \subset \G$. Given a $\Z[\G]$-module $M$ we have the homomorphisms
\[ \tx{Res}\colon H^i(\G,M) \longleftrightarrow H^i(\Delta,M)\, :\! \tx{Cor}. \]
They are functorial in $M$.
Restricting them to torsion-free $\Z[\G]$-modules we obtain morphisms between the functors
\[ H^i(\G,\,\cdot\, ) \colon\ul{\mc{D}}^b(\G) \to \tx{Ab}\quad\ \text{and}\quad\  H^i(\Delta,\,\cdot\, )\colon \ul{\mc{D}}^b(\Delta) \to \tx{Ab}. \]
Since the identities $\tx{Cor}\circ\tx{Res}=[\G:\Delta] \cdot $ and $\tx{Res} \circ \tx{Cor} = N_{\G/\Delta}$ (the second when $\Delta$ is normal) hold for all torsion-free $\Z[\G]$-modules, they carry over to all bounded complexes of $\Z[\G]$-modules.

\section{The standard resolution}
\label{ss:St-res}

We recall the standard homological homogeneous free resolution $P_n$ of the trivial $\G$-module $\Z$. Thus $P_n=\Z[\G^{n+1}]$ is the free $\Z$-module with basis $\G^{n+1}$ and $\partial_n\colon P_n \to P_{n-1}$ is the unique $\Z$-linear map that satisfies
\[ \partial_n([g_0,\dots,g_n]) = \sum_{i=0}^n (-1)^i [g_0,\dots,\hat g_i,\dots,g_n], \]
where the ``hat'' symbol signifies that $g_i$ has been omitted.
Letting $\G$ act on $P_n$ by $g \cdot [g_0,\dots,g_n] = [gg_0,\dots,gg_n]$
turns $P_n$ into a free $\Z[\G]$-module.
If we interpret $P_{-1}=\Z[\G^0]=\Z$, then $\partial_0\colon \Z[\G] \to \Z$ becomes the augmentation map,
and this leads to the exact sequence of $\Z[\G]$-modules
\[ \dots \to P_3 \to P_2 \to P_1 \to P_0 \to \Z \to 0. \]

For any $\G$-module $M$ we have the cohomological complex $\tx{Hom}_{\Z[\G]}(P_n,M)=\tx{Maps}_\G(\G^{n+1},M)$
with differential
\[ d^n\colon \tx{Maps}_\G(\G^n,M) \to \tx{Maps}_\G(\G^{n+1},M) \]
sending $c \in \tx{Maps}_\G(\G^n,M)$ to $d^nc=c \circ \partial_{n+1}$.
Here $\tx{Hom}_{\Z[\G]}$ denotes the group of homomorphisms between $\Z[\G]$-modules, and $\tx{Maps}_\G$ denotes the group of maps between $\G$-sets.
The cohomology groups of this complex are by definition the (usual, not Tate) cohomology groups of $\G$ with values in $M$.
Thus, the $n$-th cohomology group is the quotient of the group of $n$-cocycles,
that is $c \in \tx{Maps}_\G(\G^{n+1},M)$ such that $d^{n+1}c=0$, by the group of $n$-coboundaries,
which are elements in the image of $d^n$.

For any $\G$-module $M$ we have the homological complex $(P_n \otimes_{\Z[\G]} M)_{n=0}^\infty$ with differential $\partial_n \otimes \tx{id}_M$, whose homology is by definition the homology of $\G$ with values in $M$. Thus, the $n$-th homology group is the quotient of the group of $n$-cycles, which are finite sums $\sum [g_0,\dots,g_n] \otimes m_{g_0,\dots,g_n}$ killed by the differential $\partial_n \otimes \tx{id}_M$, by the group of $n$-boundaries, which are elements in  the image of $\partial_{n+1} \otimes \tx{id}_M$.

An element of $P_n \otimes_{\Z} M$, which is a finite sum $\sum [g_0,\dots,g_n] \otimes m_{g_0,\dots,g_n}$,
can also be represented by the function $(g_0,\dots,g_n) \mapsto m_{g_0,\dots,g_n}$,
which is an element of $\tx{Hom}_\Z(P_n,M)$.
More formally, we have the perfect $\G$-equivariant pairing of $\Z$-modules $P_n \otimes P_n \to \Z$
sending $[g_0,\dots,g_n] \otimes [h_0,\dots,h_n]$ to $\prod_{i=0}^n \delta_{g_i,h_i}$,
where $\delta_{g_i,h_i}$ is the Kronecker symbol. This gives the identification  $P_n = \tx{Hom}_\Z(P_n,\Z)$
as $\Z[\G]$-modules, where $g \in \G$ acts on $f \in \tx{Hom}_\Z(P_n,\Z) = \tx{Maps}(\G^{n+1},\Z)$
by the rule
\[ (gf)(h_0,\dots,h_n)=f(g^{-1}h_0,\dots,g^{-1}h_n). \]
From this we obtain in turn the identification
\[ P_n \otimes_\Z M = \tx{Hom}_\Z(P_n,\Z) \otimes M = \tx{Hom}_\Z(P_n,M),\]
 and hence
\[ P_n \otimes_{\Z[\G]} M = (P_n \otimes_\Z M)_\G = \tx{Hom}_\Z(P_n,M)_\G \]
where the subscript $\G$ denotes coinvariants.

One checks that the differential \[ \partial_n \colon P_n = \tx{Hom}_\Z(P_n,\Z) \to \tx{Hom}_\Z(P_{n-1},\Z) = P_{n-1} \]
is given by the pull-back along the $\Z$-linear map $\partial^n \colon P_{n-1} \to P_n$ defined by
\[ \partial^n(g_0,\dots,g_{n-1}) = \sum_{i=0}^n(-1)^i \sum_{h \in \G} [g_0,\dots,g_{i-1},h,g_i,\dots,g_{n-1}].\]

We thus see that the standard {\em homological} homogeneous free resolution of $\Z$
is the $\Z$-dual of the standard {\em cohomological}  homogeneous free resolution of $\Z$,
namely the cohomological complex $\Z \to P_0 \to P_1 \to \dots$ with differential $\partial^n$. In order to streamline notation, we will write $P^n$ instead of $P_n$ in this case. Note that $\partial^0 \colon \Z \to \Z[\G]$ is the diagonal map $1 \mapsto \sum_{h \in \G}[h]$.

We now have
\[ P_n \otimes_{\Z[\G]}M = (P_n \otimes_\Z M)_\G = \tx{Hom}_\Z(P^n,M)_\G \,\isoto\,\tx{Hom}_\Z(P^n,M)^\G, \]
where the final arrow  is given by the norm map for the action of $\G$, and it is an isomorphism because $P_n$ is a free $\Z[\G]$-module, which makes $\tx{Hom}_\Z(P_n,M)$ into an induced, hence cohomologically trivial, $\G$-module.
This final isomorphism commutes with pull-back along $\partial^n$.
Therefore, we can think of an $n$-chain as either a $\G$-equivariant map $\G^{n+1} \to M$,
or as an arbitrary map $\G^{n+1} \to M$ that is given up to maps of the form $c-gc$.
In either interpretation, an $n$-cycle is such $c$ with the condition $c \circ \partial^n=0$,
and an $n$-boundary is an element of the form $c \circ \partial^{n+1}$ for $c \colon \G^{n+2} \to M$.

The two resolutions $P_\bullet \to \Z$ and $\Z \to P^\bullet$ can be spliced to a bi-infinite resolution
\begin{equation} \label{eq:biinf}
\dots P_2 \to P_1 \to P_0 \to P^0 \to P^1 \to P^2 \to \dots,
\end{equation}
where the differential $P_0 \to P^0$ is given by $\partial^0 \circ \partial_0$. This is an exact complex of $\Z[\G]$-modules. We can write it in homological notation by setting $P_{-n}=P^{n-1}$ and $\partial_{-n}=\partial^{n-1}$ for $n<0$. Then $\tx{Hom}_\Z(P_n,M)^\G$ is a cohomological complex, whose cohomology groups are the Tate cohomology groups of $\G$ with values in $M$.

\section{Tate cohomology in terms of the standard resolution}

Let $A$ be a complex of $\G$-modules. We will write the complex in cohomological notation and denote its differential
by $f_A^k \colon A^k \to A^{k+1}$. We will define $C^n(\G,A)$ as the $n$-th term of the Hom-complex $\tx{Hom}_{\Z[\G]}(\tilde P_\bullet,A^\bullet)$, where $\tilde P_\bullet$ is the bi-infinite resolution \eqref{eq:biinf} but with one small change: we replace the differential $\partial_n$ in that resolution with the differential $\tilde\partial_n = (-1)^{n+1}\partial_n$. Of course, $\tilde P_\bullet$ is again an acyclic complex by free $\Z[\G]$-modules. Thus, more explicitly,
\[ C^n(\G,A) = \prod_k \tx{Hom}_{\Z[\G]}(P_{n-k},A^k), \]
with differential $d^n\colon C^n(\G,A) \to C^{n+1}(\G,A)$ that  is given on the direct factor $\tx{Hom}_{\Z[\G]}(P_{n-k},A^k)$ of $C^n(\G,A)$  by the $\Z$-linear map
\[ \tx{Hom}_{\Z[\G]}(P_{n-k},A^k) \to \tx{Hom}_{\Z[\G]}(P_{n-k},A^{k+1}) \oplus \tx{Hom}_{\Z[\G]}(P_{n+1-k},A^k) \]
defined by
\[ d^n c^k= f^k_A \circ c^k  + (-1)^k c^k \circ \partial_{n+1-k}. \]
With this differential, $C^n(\G,A)$ becomes a cohomological complex.

\begin{proposition} Let $A$ be a bounded complex of $\Z[\G]$-modules. The cohomology groups of the complex $C^n(\Gamma,A)$ coincide with the Tate cohomology groups of $A$ defined in terms of the stable module category, functorially in $A$.
\end{proposition}
\begin{proof}
	As we have already remarked, for complexes concentrated in one degree this follows from \cite[Proposition 2.6.2]{CTVEZ}.

	Write temporarily $\tilde H^n(\Gamma,A)$ for the $n$-cohomology group of the complex $C^n(\Gamma,A)$. We first check that the functor $A \mapsto \tilde H^n(\G,A)$ descends to the derived category of $\Z[\G]$-modules. For this it is enough to show that it sends quasi-isomorphisms of complexes to isomorphisms of abelian groups \cite[Chap. III,\S2]{GelfandManin}. Thus we want to show that $A \mapsto C^n(\G,A)$ maps quasi-isomorphisms of $\Z[\G]$-modules to quasi-isomorphisms of $\Z$-modules.
	
	A morphism $f\colon A \to B$ of complexes is a quasi-isomorphism if and only if its cone $\tx{hcok}(f)$ is acyclic. Now $C^n(\G,A)$ is defined as the Hom-complex for the pair $\tilde P_\bullet$ and $A^\bullet$, and since the Hom-complex respects cones, we have $C^n(\G,\tx{hcok}(f))=\tx{hcok}(C^n(\G,f))$. Note that the boundedness of $A$ and $B$ implies the boundedness of $\tx{hcok}(f)$. This reduces the proof to showing that $C^\bullet(\G,A)$ is acyclic provided $A$ is acyclic. Since $A$ is bounded, we may replace $P_\bullet$ by any finite truncation when checking the acyclicity of $C^\bullet(\G,A)$ complex at a particular degree. But a bounded complex of projective $\Z[\G]$-modules is a projective complex of $\Z[\G]$-modules, and the claim follows.

	Next one checks that $\tilde H(\Gamma,-)$ is a cohomological functor. The arguments are analogous to the case of usual Tate cohomology and are left to the reader (cf. \cite[VII.5]{Brown}).

	Next one checks that if $A$ is a perfect complex, then $\tilde H(\Gamma,A)=0$. This is done by induction on the length of the complex $A$, using the fact that $\tilde H(\Gamma,-)$ is a cohomological functor, and the fact that for a projective $\Z[\G]$-module $A$ (seen as a complex concentrated in a single degree) we have $\tilde H(\G,A)=0$.

	It follows from the universal property of the Verdier quotient (\cite[Lemma I.13.6.8]{stacks-project}) that $\tilde H(\G,-)$ descends to $\ul{\mc{D}}_2^b(\G)$. Composing this functor with the isomorphism $\ul{\mc{D}}_1^b(\G) \to \ul{\mc{D}}_2^b(\G)$ reduces the proof to showing that $\tilde H(\G,-)$ and $H(\G,-)$ coincide for complexes concentrated in a single degree, which has been established.
\end{proof}

The usage of the modified differential $\tilde\partial_n$ was suggested to us by Kottwitz. It has the effect that, when $A$ is a complex concentrated in degree $0$, the cochain complex $C^n(\G,A)$ defined above coincides with the usual cochain complex in the definition of Tate cohomology of $\G$-modules.

It is useful to record that, for $c \in C^n(\G,A)$, we have
\begin{equation} \label{eq:hyperdif}
(dc)^k = f^{k-1}_A \circ c^{k-1}+(-1)^k\circ\partial_{n+1-k} \in \tx{Hom}_{\Z[\G]}(P_{n+1-k},A^k).	
\end{equation}

\begin{remark}
	The above formula for $C^n(\G,A)$ and its differential can also be used to compute Tate cohomology for an unbounded complex $A$.
\end{remark}

\section[Restriction and corestriction]{Restriction and corestriction in terms of the standard resolution}
\label{sub:rescor}

In this section we will give explicit formulas for the  restriction and corestriction maps in terms of the standard resolution
(in its homogeneous form; for the inhomogeneous form, see Appendix \ref{app:cheat-sheet}).

Let $\Delta \subset \G$ be a subgroup and let $s\colon \Delta \lmod \G \to \G$ be a section of the natural projection.

We begin with corestriction. For a $\Z[\G]$-module $M$ and $n \in \Z$ define
\[ \tx{cor}^n\colon C^n(\Delta,M) \to C^n(\G,M) \]
as follows. If $n \geq 0$, then
\[ \tx{cor}^n(c)(g_0,\dots,g_n) = \sum_{x \in \Delta \lmod \G} s(x)^{-1}c(s(x)g_0s(xg_0)^{-1},\dots,s(x)g_ns(xg_n)^{-1}). \]
If $n<0$, then
\[ \tx{cor}^n(c)(g_0,\dots,g_{-n-1}) =\! \begin{cases}
gc(h_0,\dots,h_{-n-1}) &\!\!\text{if }\, \exists g \in \G,h_i \in\Delta\,\text{ such that }\, g_i=gh_i\\
0 &\!\!\text{otherwise}.
\end{cases}
\]

\begin{lemma} \label{lem:cor1}
For every $n \in \Z$ we have $\tx{cor}^{n+1} \circ d_\Delta^n = d_\G^n \circ \tx{cor}^n$. Moreover, $\tx{cor}^n$ is functorial in $M$.
\end{lemma}
\begin{proof}
This is a simple exercise using the definitions.
\end{proof}

We now turn to restriction. For  a $\Z[\G]$-module $M$ and $n \in \Z$ define
\[ \tx{res}^n\colon C^n(\G,M) \to C^n(\Delta,M) \]
as follows. If $n < 0$, then
\[ \tx{res}^n(c)(h_0,\dots,h_{-n-1}) = \sum_{x \in (\Delta\lmod \G)^{-n}} c(h_0s(x_0),\dots,h_{-n-1}s(x_{-n-1})). \]
If $n \geq 0$, then
\[ \tx{res}^n(c)(h_0,\dots,h_n) = c(h_0,\dots,h_n). \]

\begin{lemma} \label{lem:res1}
For every $n \in \Z$ we have $\tx{res}^{n+1} \circ d_\G^n = d_\Delta^n \circ \tx{res}^n$. Moreover, $\tx{res}^n$ is functorial in $M$.
\end{lemma}

\begin{proof}
This is a simple exercise using the definitions.
\end{proof}

Let $A$ be a cohomological complex of $\Z[\G]$-modules. Define
\[ \tx{cor}^n\colon C^n(\Delta,A) \to C^n(\G,A) \quad\tx{and}\quad \tx{res}^n\colon C^n(\G,A) \to C^n(\Delta,A) \]
by
\[ \tx{cor}^n(c)^k = \tx{cor}^{n-k}(c^k) \quad\tx{and}\quad \tx{res}^n(c)^k = \tx{res}^{n-k}(c^k). \]

\begin{corollary}
For every $n \in \Z$ we have $\tx{cor}^{n+1} \circ d_\Delta^n = d_\G^n \circ \tx{cor}^n$. The same holds for $\tx{res}$ in place of $\tx{cor}$. Both are functorial in $A$.
\end{corollary}

\begin{proof}
This follows from Lemmas \ref{lem:cor1} and \ref{lem:res1}, and \eqref{eq:hyperdif}.
\end{proof}

\begin{lemma} \label{lem:corres}
Let $A$ be a bounded complex of $\Z[\G]$-modules.
	The maps
\[ \tx{cor}^n\colon H^n(\Delta,A) \longleftrightarrow H^n(\G,A)\hs:\hm \tx{res}^n \]
obtained from the above explicitly constructed cochain maps coincide with the maps defined abstractly in terms of the stable category.
\end{lemma}

\begin{proof}
This is well-known for complexes that are concentrated in degree $0$, see e.g. \cite[I.5]{NSW08}. Since these maps are morphisms between functors that originate in the stable category, and the functor from $\tx{Mod}_\tx{tf}(\Z[\G])$ to the stable category is essentially surjective, we obtain the result for all objects in the stable category.
\end{proof}

\begin{remark}
	The above definitions also define restriction and corestriction for unbounded complexes.
\end{remark}

\section{Explicit formulas in degrees $-1$ and $0$}  \label{sub:explicit0-1}

For any $\Z[\G]$-module $A$, the norm map $N_\G \colon A \to A^\G$ descends to a map $A_\G \to A^\G$,
which we will again denote by $N_\G \colon A_\G \to A^\G$. This construction has the following relative variant.
Let $\G' \subset \G$ be a subgroup.
Given a section $s \colon \G' \lmod \G \to \G$, we can consider $\sum_{x \in \G' \lmod \G} s(x)\in\Z[\G]$,
which we regard as a map $\sum_{x \in \G' \lmod \G} s(x) :\, A \to A$.
Composing this map with the natural projection $A \to A_{\G'}$, we obtain a map $A \to A_{\G'}$,
which is independent of the choice of section $s$ and descends to a map $A_{\G} \to A_{\G'}$.
We will call this map $N_{\G' \lmod \G}$.
It is clearly functorial in $A$. When $\G'$ is normal in $A$, $N_{\G' \lmod \G}$ takes values in $(A_{\G'})^{\G/\G'}$, and equals the absolute map
\[ N_{\G/\G'}\colon A_\G = (A_{\G'})_{\G/\G'} \lra (A_{\G'})^{\G/\G'}. \]
We also consider the map $\sum_{x \in \G' \lmod \G} s(x)^{-1}\! : \, A^{\G'} \to A^\G$,
which is independent of the choice of section $s$  and which we denote by $N_{\G/ \G'}$.

\begin{fact} \label{fct:h0}
	The augmentation map $\epsilon\colon A[\G] \to A$ induces an isomorphism
\[ H^0(\G,A)=Z^0(\G,A)/B^0(\G,A)\, \isoto\, A^\G/N_\G(A), \]
which is functorial in $A$.
\end{fact}

\begin{fact} \label{fct:372}
Let $A$ be a $\G$-module and $B$ be an abelian group
regarded as a $\G$-module with trivial $\G$-action.
Then we have a canonical isomorphism $A_\G\otimes B\hs\isoto\hs (A\otimes B)_\G\hs$.
\end{fact}

\begin{fact} \label{fct:Q}
Let $B$ be an abelian group and $B_\tf = B/B_{\Tors}$ its torsion-free quotient. The natural map $B \to B_\tf$ induces an isomorphism $B\otimes\Q \hs\isoto\hs B_\tf \otimes\Q$.
\end{fact}

\begin{fact} \label{fct:Q/Z}
Let $B$ be a {\em torsion-free} abelian group. The inclusion $B \to B\otimes\Q$
induces an isomorphism $B\otimes(\Q/\Z) \hs\isoto\hs (B\otimes\Q)/B$.
\end{fact}

Let $A$ be a $\G$-module.
The norm homomorphism $ A_\G \to A^\G$ is in general neither injective nor surjective;
its kernel and cokernel are torsion.
The norm homomorphism $ (A\otimes \Q)_\G\to (A\otimes \Q)^\G$ is an isomorphism,
and its inverse sends $x \in (A\otimes\Q)^\G$
to the element of $(A\otimes\Q)_\G$ represented by $|\G|^{-1} \cdot x$; \,the tensor products are taken over $\Z$.

The inclusion map $i\colon A_\tf\into A_\tf\otimes \Q$
induces a  not necessarily injective homomorphism
\[i_*\colon (A_\tf)_\G\to (A_\tf)_\G\otimes\Q=(A_\tf\otimes \Q)_\G\hs, \]
while the norm homomorphism
\[N_\G\colon A_\tf\otimes \Q\to (A_\tf\otimes \Q)^\G\]
induces an  isomorphism $(A_\tf\otimes \Q)_\G\to (A_\tf\otimes \Q)^\G$ and an isomorphism
\[N_{\G\!,*}\colon\,(A_\tf\otimes \Q)_\G/i_*\big((A_\tf)_\G\big)\,\longisoto\, (A_\tf\otimes\Q)^\G/N_\G(A_\tf).\]
We now define a homomorphism
\begin{equation} \label{eq:xi}
	\xi_\G\colon H^0(\G,A) \to (A\otimes\Q/\Z)_\G
\end{equation}
to be the composition
\begin{multline*}
H^0(\G,A)=A^\G/N_\G(A)\to (A\otimes \Q)^\G/N_\G(A) =(A_\tf\otimes \Q)^\G/N_\G(A_\tf)\\
\labeltoo{N_{\G\!,*}^{-1}} (A_\tf\otimes \Q)_\G/i_*\big((A_\tf)_\G\big)=\big ( (A_\tf\otimes\Q)/A_\tf\big)_\G\\
\cong (A_\tf\otimes\Q/\Z)_\G\cong(A\otimes \Q/\Z)_\G.
\end{multline*}

\begin{remark}\label{r:xi}
When $A$ is torsion-free, the homomorphism $\xi_\G$ is injective.
\end{remark}

\begin{fact} \label{fct:xig}
The composition of $\xi_\G$ with the natural map $A^\G \to A^\G/N_\G(A) = H^0(\G,A)$
sends $x \in A^\G$ to the class of $|\G|^{-1}\cdot x$.
\end{fact}

\begin{lemma} \label{lem:res0}
Let $\G' \subset \G$ be a subgroup. The following diagrams commute:
\[ \xymatrix{
	A^\G\ar[r]\ar[d]&H^0(\G,A)\ar[d]^{\tx{Res}}\ar[r]^-{\xi_\G}&(A \otimes\Q/\Z)_\G\ar[d]^{N_{\G' \lmod \G}}\\
	A^{\G'}\ar[r]&H^0(\G',A)\ar[r]^-{\xi_{\G'}}&(A \otimes\Q/\Z)_{\G'},
}\]
where the left-hand vertical arrow is the natural inclusion, and
\[ \xymatrix{
	A^\G\ar[r]&H^0(\G,A)\ar[r]^-{\xi_\G}&(A \otimes\Q/\Z)_\G\\
	A^{\G'}\ar[r]\ar[u]^{N_{\G / \G'}}&H^0(\G',A)\ar[r]^-{\xi_{\G'}}\ar[u]^{\tx{Cor}}&(A \otimes\Q/\Z)_{\G'}\ar[u],
}\]
where the right-hand vertical arrow is the natural projection.
\end{lemma}

\begin{proof}
The commutativity of the left-hand rectangles follows from Lemma \ref{lem:corres} and the explicit description of restriction and corestriction in terms of cochains. The commutativity of the right-hand rectangles will follow from that of the outer rectangles. That commutativity in turn is immediate from Fact \ref{fct:xig} and the identity $\sum_{\sigma \in \G/\G'}\sigma(x)=|\G/\G'|\cdot x$ in $(A\otimes\Q/\Z)_\G$, for any $x \in A^{\G'}$.
\end{proof}

Let ${}_{N_\G}A$
denote the kernel of the homomorphism $N_\G\colon A\to A^\G$ and let $I_\G$ be
the kernel of the augmentation map $\Z[\G] \to \Z$, that is,
the ideal in $\Z[\G]$ generated by the elements $\gamma-1$ for $\gamma\in\G$.

\begin{fact} \label{fct:h-1}
The augmentation map $\epsilon\colon A[\G] \to A$ induces an isomorphism
\[ H^{-1}(\G,A)=Z^{-1}(\G,A)/B^{-1}(\G,A) \to  {}_{N_\G}A/I_\G A, \]
which is functorial in $A$.
Furthermore, we have the natural inclusion ${}_{N_\G}A\,/I_\G A \subset (A_\G)_{\Tors}$\hs,
which is an isomorphism when $A$ is torsion-free.
\end{fact}

\begin{lemma} \label{lem:connect-10}
For a short exact sequence of $\G$-modules  $0 \to A' \to A \to A'' \to 0,$ the following diagram commutes:
\[ \xymatrix{
	H^{-1}(\G,A'')\ar[d]_-\cong\ar[r]&H^0(\G,A')\ar[d]^-\cong\\
	{}_{N_\G}A''/I_\G(A'')\ar[r]&(A')^\G/N_\G(A')
}\]
where the top horizontal arrow is the connecting homomorphism,
the bottom horizontal arrow sends  the class of $a''\in {}_{N_\G}A''$
to the class of $N_\G(a)\in (A')^\G$
for an arbitrary lift $a \in A$ of $a''$,
and the vertical isomorphisms are those of  Facts \ref{fct:h-1} and \ref{fct:h0}.
\end{lemma}

\begin{proof}
This follows at once from the description of the differential in the standard resolution.
\end{proof}

  Let $0 \to A_1 \to A_2 \to A_3 \to 0$ be a short exact sequence of $\Z[\G]$-modules. We recall the map
\begin{equation} \label{eq:connect-delta}
	\delta\colon (A_3)_{\G,{\Tors}} \to (A_1 \otimes\Q/\Z)_\G
\end{equation}
defined in Theorem \ref{t:Hinich} in Appendix \ref{app:Hinich} where it is denoted by $\delta_0$.
Given $a_3 \in A_3$ and $n \in \Z$
such that $na_3 \in I_\G A_3$, we have $na_3 = \sum_\gamma (\gamma a_{3,\gamma}-a_{3,\gamma})$
for some $a_{3,\gamma} \in A_3$.
Lift $a_3,\hs a_{3,\gamma}$ arbitrarily to $a_2,\hs a_{2,\gamma} \in A_2$.
Then $a_1\coloneqq na_2-\sum_\gamma (\gamma a_{2,\gamma}-a_{2,\gamma}) \in A_1$, and we set $\delta[a_3]=[n^{-1}a_1]$;
see Proposition \ref{p:delta0}.
One checks easily that $\delta[a_3]$ depends only on the class $[a_3]$  of $a_3$ in $(A_3)_\G$.

\begin{lemma} \label{lem:connect-delta}
The following diagram commutes:
\[ \xymatrix{
	H^{-1}(\G,A_3)\ar[r]\ar[d]&H^0(\G,A_1)\ar[d]^{\xi_\G}\\
	(A_3)_{\G,{\Tors}}\ar[r]^-\delta&(A_1 \otimes \Q/\Z)_\G
}\]
where the top horizontal arrow  is the connecting homomorphism,
the left-hand vertical arrow  is that of Fact \ref{fct:h-1}
and the homomorphism $\delta$ is defined above.
\end{lemma}

\begin{proof}
Using Lemma \ref{lem:connect-10}, the proof is reduced to showing that,
with the notation of the definition of the map $\delta$ given above, $N_{E/F}(a_2)=N_{E/F}(n^{-1}a_1)$.
This, however, is obvious.	
\end{proof}

\begin{lemma} \label{lem:res-1}
Let $A$ be a $\G$-module and let $\G' \subset \G$. The following diagrams commute:
\[ \xymatrix{
	H^{-1}(\G,A)\ar[r]\ar[d]^{\tx{Res}}&A_\Gt\ar[d]^{N_{\G' \lmod \G}}&\\
	H^{-1}(\G',A)\ar[r]&A_{\G'\hm,\Tors}
}
\quad
\xymatrix{
	H^{-1}(\G,A)\ar[r]&A_\Gt&\\
	H^{-1}(\G',A)\ar[r]\ar[u]^{\tx{Cor}}&A_{\G'\hm,\Tors}\ar[u]
}
\]
where the right-hand vertical arrow in the rectangle at right is the natural projection.
\end{lemma}

\begin{proof}
One can check this directly using the explicit description of the restriction and corestriction maps given in \S\ref{sub:rescor}. We give here an alternative proof. Take a resolution $A^{-1} \to A^0$ of $A$ such that $A^0$ is a free $\Z[\G]$-module. The connecting homomorphism $H^{-1}(\G,A) \to H^0(\G,A^{-1})$ is then an isomorphism, and the claim follows from Lemmas \ref{lem:res0} and \ref{lem:connect-delta}, and the fact that the map \eqref{eq:connect-delta} is injective due to Theorem \ref{t:Hinich} and the freeness of the $\Z[\Gamma]$-module $A^0$.
\end{proof}

\begin{remark} \label{rem:ncomp}
It is useful to observe that, given $\G'' \subset \G' \subset \G$ we have $N_{\G'' \lmod \G'} \circ N_{\G' \lmod \G} = N_{\G'' \lmod \G}$. Indeed, note that giving a section $s\colon \G' \lmod \G \to \G$ is equivalent to giving a retraction $r\colon \G \to \G'$, that is, a set-theoretic map that satisfies $r(\gamma' \gamma)=\gamma' r(\gamma)$ for $\gamma' \in \G'$ and $\gamma \in \G$. The equivalence is given by $\gamma = r(\gamma) \cdot s(\gamma)$. Now, given $r\colon \G \to \G'$ and $r'\colon \G' \to \G''$, we can define $r''\colon \G \to \G''$ by $r'' = r' \circ r$. The associated section $s''\colon \G'' \lmod \G \to \G$ is then determined by $s''(\gamma)=s'(r(\gamma)) \cdot s(\gamma)$. One now checks at once the desired formula $N_{\G'' \lmod \G'} \circ N_{\G' \lmod \G} = N_{\G'' \lmod \G}$.
\end{remark}

\chapter[The Tate--Nakayama isomorphism]{The Tate--Nakayama isomorphism for bounded complexes of tori} \label{sec:tniso}

In this chapter $E/F$ is a finite Galois extension of local or global fields with Galois group $\G=\G\sub{E/F}$.
We will extend the famous Tate--Nakayama isomorphism defined originally in \cite{Tate66}
in the case of tori to the case of bounded complexes of tori.

\section{The local Tate class.}

Consider the case when $F$ is local. A fundamental role in class field theory is played by the \emph{fundamental class} $\alpha_{E/F} \in H^2(\G,E^\times)$. Its construction is a major achievement of the theory, and can be summarized in the following steps:
\begin{enumerate}
	\item If $E/F$ is unramified, then $H^i(\G,O_E^\times)=\{1\}$ for all $i$, and hence the normalized valuation map $E^\times \to \Z$ induces an isomorphism
$H^i(\G,E^\times) \to H^i(\G,\Z)$.
On the other hand, the exact sequence $0 \to \Z \to \Q \to \Q/\Z \to 0$ leads to an edge map $H^1(\G,\Q/\Z) \to H^2(\G,\Z)$ that is an isomorphism.
Finally, $H^1(\G,\Q/\Z)=\tx{Hom}(\G,\Q/\Z)$ has the canonical element sending the Frobenius element of $\G$ to $(\#\G)^{-1}$.
 	\item The constructions in (1) are compatible with inflation and provide an isomorphism $H^2(\tx{Gal}(F^\tx{ur}/F),(F^\tx{ur})^\times) \to \Q/\Z$, where $F^\tx{ur}$ is the maximal unramified extension of $F$ contained in a fixed separable extension $F^s$.
  	\item The inflation map $H^2(\tx{Gal}(F^\tx{ur}/F),(F^\tx{ur})^\times) \to H^2(\tx{Gal}(F^s/F),(F^s)^\times)$ is an isomorphism. Hence, for any finite Galois extension $E/F$, the subgroup $H^2(\Gamma,E^\times) \subset H^2(\tx{Gal}(F^s/F),(F^s)^\times)$ is cyclic of order $[E:F]$ with canonical generator called the fundamental class,
  which we denote by $\alpha_{E/F}$.
\end{enumerate}
We refer the reader to \cite[Chapter  VI]{CasFro86} for more details.

Given a tower $K/E/F$ of finite Galois extensions, we have
\[ \tx{res}(\alpha_{K/F})=\alpha_{K/E},\quad \tx{cor}(\alpha_{K/E})=[E:F]\hs\alpha_{K/F},\quad \tx{inf}(\alpha_{E/F})=[K:E]\hs\alpha_{K/F}.\]

\section{The global Tate class $\alpha_3$\hs.}

Consider now the case when $F$ is global.
Let $S_F\subseteq \V_F$ be a subset, finite or infinite.
Let $S_E$ denote the preimage of $S_F$ in $\V_E$.
Following  Tate \cite{Tate66}, we assume that $S_F$ satisfies the following conditions:

\begin{enumerate}
\item[\rm (i)] $S_F$ contains all archimedean places of $F$;

\item[\rm (ii)] $S_F$ contains all places ramified in $E$;

\item[\rm (iii)] $S_F$ is large enough with respect to $E$
so that every ideal class of $E$ contains an ideal with support in $S_E$.
\end{enumerate}

Observe that the set $S_F=\V_F$ of all places of $F$ satisfies conditions (i-iii).

Following \cite{Tate66}, we consider the short exact sequence of $\G$-modules
\begin{equation} \label{eq:es1}
0\to O_{E,S}^\times\to \A_{E,S}^\times\to C_E\to 0
\end{equation}
where we use the following notations:
\begin{itemize}
\item $O_{E,S}^\times\coloneqq O_{E,S_E}^\times$ is the group of invertible elements  of the ring $O_{E,S_E}$ of elements of $E$
     that are integral outside $S_E$.
\item $\A_{E,S}^\times\coloneqq\A_{E,S_E}^\times$ is the group of invertible elements
   of the ring $\A_{E,S_E}$ of elements of the ad\`ele ring $\A_E$ of $E$ that are integral outside $S_E$.
\item $C_E$ is the group $\A_{E,S}^\times/O_{E,S}^\times$ of $S$-id\`ele classes of $E$, which in view of condition (iii)
     is isomorphic to the group of all id\`ele classes $\A_E^\times/E^\times$ of $E$.
\end{itemize}
Writing $\Z[S_E]_0$ for the kernel of the augmentation map $\Z[S_E] \to \Z$, we have the exact sequence
\begin{equation} \label{eq:es2}
0 \to \Z[S_E]_0 \to \Z[S_E] \to \Z \to 0.
\end{equation}
We recall from \cite{Tate66} the construction of the three classes
\begin{align*}
\alpha_1 \in &\ H^2(\G, C_E)=H^2\big(\G, \tx{Hom}(\Z,\A_{E,S}^\times/O_{E,S}^\times)\big), \\
\alpha_2 \in &\ H^2\big(\G,\tx{Hom}(\Z[S_{E}],\A_{E,S}^\times)\big),\\
\alpha_3 \in &\ H^2\big(\G,\tx{Hom}(\Z[S_E]_0,O_{E,S}^\times)\big)
\end{align*}
where $\tx{Hom}$ denotes the group of abelian group homomorphisms.
The class $\alpha_1$ is simply the fundamental class for the global extension $E/F$, for whose construction we refer the reader to \cite[Chapter VII, \S11]{CasFro86}. The class $\alpha_2$ is a composite of the fundamental classes of all local extensions arising from $E/F$.
More precisely,
choosing a section $v\mapsto \bv$  of the projection $S_E\to S_F$, we have the isomorphism
\[ H^2\big(\G,\tx{Hom}(\Z[S_{E}],\A_{E,S}^\times)\big) \to \prod_{v \in S_F} H^2(\Gv,\A_{E,S}^\times), \]
which sends a class $\beta$ on the left to the collection $(\beta_{w})_{w}$ where $w=\bv$ and
$\beta_{w}$ is obtained from $\beta$ by restricting to $\G\sss{w}$ and then evaluating at $[w] \in \Z[S_E]$.
Then $\alpha_2$ is the class on the left that corresponds
to the collection $(\alpha_{2,w})_{w}$ where $w=\bv$ and  $\alpha_{2,w}$ is the image of the local fundamental class in $H^2(\G\sss{w},(E_{w})^\times)$ under the natural inclusion $(E_{w})^\times \to \A_{E,S}^\times$.

To construct the class $\alpha_3$, consider the group $X$ of homomorphisms from the exact sequence \eqref{eq:es2} to the exact sequence \eqref{eq:es1},
that is, the group of compatible triples of elements of
\[ \tx{Hom}(\Z[S_E]_0,O_{E,S}^\times),\ \tx{Hom}(\Z[S_E],\A_{E,S}^\times),\
\text{and}\  \tx{Hom}(\Z,\A_{E,S}^\times/O_{E,S}^\times).\]
For $i=1,2,3$ let $u_i$ be the map that selects the $(4-i)$th component of such a triple.
Thus we have the map
\[ (u_2,u_1)\colon X \to \tx{Hom}(\Z[S_E],\A_{E,S}^\times) \times \tx{Hom}(\Z,\A_{E,S}^\times/O_{E,S}^\times). \]
Tate proves that there exists a unique class $\alpha \in H^2(\G,X)$ whose image under $(u_2,u_1)$ is the tuple $(\alpha_2,\alpha_1)$. Then $\alpha_3$ is defined to be $u_3(\alpha)$.

From the definition the following properties are immediate.

\begin{fact} \label{fct:tate-123}
\begin{enumerate}
	\item[\rm(1)] The images of $\alpha_3$ and $\alpha_2$ in $H^2(\G,\tx{Hom}_\Z(\Z[S_E]_0,\A_{E,S}^\times))$ coincide, where we have used the natural inclusions $\Z[S_E]_0 \to \Z[S_E]$ and $O_{E,S}^\times \to \A_{E,S}^\times$.
	\item[\rm(2)] Let $w \in S_E$ and let $v \in S$ be its image. The image of $\alpha_2$  in the group $H^2(\G\sub{E/F,w},E_w^\times)$,
where we first restrict along the inclusion $\G\sub{E/F,w} \to \G\sub{E/F}$ and then project via $\A_{E,S}^\times \to E_w^\times$, equals $\alpha_{E_w/F_v}$.
\end{enumerate}
\end{fact}

\begin{lemma} \label{lem:tate-res}
Let $K/E/F$ be a tower of finite Galois extensions. The image of $\alpha_{3,K/F}$ under
\[ \tx{Res}\colon H^2(\G\sub{K/F},\tx{Hom}_\Z(\Z[S_K]_0,O_{K,S}^\times)) \to H^2(\G\sub{K/E},\tx{Hom}_\Z(\Z[S_K]_0,O_{K,S}^\times)) \]
is the Tate class $\alpha_{3,K/E}$.
\end{lemma}

\begin{proof}
By construction of $\alpha_3$ it is enough to prove that the restriction of $\alpha_{K/F}$ equals $\alpha_{K/E}$.
By construction of $\alpha$ it is enough to prove the analogous statement for $\alpha_1$ and $\alpha_2$.
For $\alpha_1$ this is well-known.
For $\alpha_2$ this follows from the corresponding statement for the local classes $\alpha_{2,w}$\hs, which is again well-known.
\end{proof}

\section{Definition of the isomorphism} \label{sub:tiso}

We continue with $E/F$ being a finite Galois extension of local or global fields with Galois group $\G$. In the local case, we let $\alpha_{E/F} \in H^2(\G,E^\times)$ be the fundamental class.
In the global case, we consider a set $S_F\subseteq \V_F$ satisfying conditions (i-iii) of the previous section, and let $\alpha_{3,E/F} \in \tx{Hom}(\Z[S_E]_0,O_{E,S}^\times)$ be the Tate class.

Let $M \in \tx{mod}_\tx{tf}\big(\G\sub{E/F}\big)$. When $E/F$ is local, we consider for $i \in \Z$ the homomorphism
\begin{equation*}
 - \cup \alpha_{E/F} :\,  H^i(\G\sub{E/F},M) \to H^{i+2}(\G\sub{E/F},M \otimes_\Z E^\times).
\end{equation*}
When $E/F$ is global, we consider for $i \in \Z$ the homomorphism
\[ - \cup \alpha_{3,E/F} :\, H^i(\G\sub{E/F},M \otimes_\Z \Z[S_E]_0) \to H^{i+2}(\G\sub{E/F},M \otimes_\Z O_{E,S}^\times). \]
In both cases, Tate \cite{Tate66} has shown that this homomorphism is an isomorphism. It is evidently functorial in $M$.

Tate's isomorphism extends from the case of
torsion-free finitely generated $\Z[\G\sub{E/F}]$-modules to bounded complexes of such. Abstractly this can be seen as follows.
We can present the isomorphism as an isomorphism between the two functors
from $\tx{mod}_\tx{tf}(\G\sub{E/F})$ to the category of abelian groups $\Ab$, defined by
\[ M \mapsto M \otimes_\Z \Z[S_E]_0 \mapsto H^i(\G\sub{E/F},M \otimes_\Z \Z[S_E]_0) \]
and
\[ M \mapsto M \otimes_\Z O_{E,S}^\times \mapsto H^{i+2}(\G\sub{E/F},M \otimes_\Z O_{E,S}^\times) \]
in the global case, and by
\[ M \mapsto M \otimes_\Z \Z \mapsto H^i(\G\sub{E/F},M \otimes_\Z \Z) \]
and
\[ M \mapsto M \otimes_\Z E^\times \mapsto H^{i+2}(\G\sub{E/F},M \otimes_\Z E^\times) \]
in the local case. We then observe that these functors descend along the essentially surjective functor $\tx{mod}_\tx{tf}(\G\sub{E/F}) \to \ul{\mc{D}}^{b,c}(\G\sub{E/F})$. For this, we use the symmetric monoidal structure on this category to note that we can perform the tensor products internally in $\ul{\mc{D}}^{b,c}(\G\sub{E/F})$.

From now on we will abbreviate $M \otimes_\Z \Z[S_E]_0$ by $M[S_E]_0$.

\section{Compatibility with restriction and corestriction}

Let $M$ be a bounded complex of finitely generated torsion-free $\Z[\G\sub{E/F}]$\-modules. Consider a tower of finite Galois extensions $K/E/F$.

\begin{lemma} \label{lem:tnrescor}
\begin{enumerate}
	\item Assume that $F$ is local. We have the commutative diagrams
	\[ \xymatrix@C=15mm{
		H^i(K/F,M)\ar[d]_-{\tx{Res}}\ar[r]^-{\cup \alpha_{K/F}}&H^{i+2}(K/F,M\otimes K^\times)\ar[d]^-{\tx{Res}}\\
		H^i(K/E,M)\ar[r]^-{\cup \alpha_{K/E}}&H^{i+2}(K/E,M\otimes K^\times) } \]
	and
	\[ \xymatrix@C=15mm{
		H^i(K/F,M)\ar[r]^-{\cup \alpha_{K/F}}&H^{i+2}(K/F,M\otimes K^\times)\\
		H^i(K/E,M)\ar[u]^-{\tx{Cor}}\ar[r]^-{\cup \alpha_{K/E}}&H^{i+2}(K/E,M\otimes K^\times)\ar[u]_-{\tx{Cor}} } \]
	
\item Assume that $F$ is global. We have the commutative diagrams
	\[ \xymatrix@C=15mm{
		H^i(K/F,M[S_{K}]_0)\ar[d]_-{\tx{Res}}\ar[r]^-{\cup \alpha_{3,K/F}}&H^{i+2}(K/F,M\otimes O_{K,S}^\times)\ar[d]^-{\tx{Res}}\\
		H^i(K/E,M[S_{K}]_0)\ar[r]^-{\cup \alpha_{3,K/E}}&H^{i+2}(K/E,M\otimes O_{K,S}^\times) } \]
	and
	\[ \xymatrix@C=15mm{
		H^i(K/F,M[S_{K}]_0)\ar[r]^-{\cup \alpha_{3,K/F}}&H^{i+2}(K/F,M\otimes O_{K,S}^\times)\\
		H^i(K/E,M[S_{K}]_0)\ar[u]^-{\tx{Cor}}\ar[r]^-{\cup \alpha_{3,K/E}}&H^{i+2}(K/E,M\otimes O_{K,S}^\times)\ar[u]_-{\tx{Cor}} } \]
\end{enumerate}
\end{lemma}

\begin{proof}
Since the horizontal isomorphisms are between functors $\ul{\mc{D}}^{b,c}(\G\sub{E/F}) \to \Ab$, we can use the presentation $\ul{\mc{D}}^{b,c}_1(\G\sub{E/F})$ and see that it is enough
to prove the statements when $M$ is concentrated in degree $0$, that is, $M$ is a $\Z[\G\sub{E/F}]$-module.

The local and global diagram for restriction are proved in the same way, namely via the formulas
$\tx{Res}(\lambda \cup \alpha_{K/F}) = \tx{Res}(\lambda) \cup \tx{Res}(\alpha_{K/F})$ and $\tx{Res}(\alpha_{K/F})=\alpha_{K/E}$,
where we are using $\alpha_{K/F}$ to denote either the local fundamental class or the global Tate class $\alpha_{3,K/F}$.
The first equality is a basic property of cup products (cf. \cite[Chapter IV, \S7, Proposition 9(iii)]{CasFro86}),
while the second equality is well-known in the local case, and is Lemma \ref{lem:tate-res} in the global case.

The commutativity of the  local and global diagrams for corestriction are also proved in the same way,
namely via the formula
\[ \tx{Cor}(\mu \cup \alpha_{K/E}) = \tx{Cor}\big(\mu \cup \tx{Res}(\alpha_{K/F})\big) =
   \tx{Cor}(\mu) \cup \alpha_{K/F}. \]
Here the second equality is   \cite[Chapter IV, \S7, Proposition 9(iv)]{CasFro86}.
\end{proof}

\section{Compatibility with localization}

Let $F$ be a global field, and $E/F$ a finite Galois extension with Galois group $\G\sub{E/F}$.
Let $w$ be a place of $E$, and $v$ the place of $F$ below $w$.
We define the map
\[ l^i_w \!:\, H^i(\G\sub{E/F},M[S_E]_0) \to H^i(\G\sub{E/F,w},M)\]
to be the composition of the restriction map
\[ H^i(\G\sub{E/F},M[S_E]_0) \to H^i(\G\sub{E/F,w},M[S_E]_0)\]
and the evaluation map $M[S_E]_0 \to M$
that extracts the $w$-component.
We further consider the usual localization map
\[ \tx{loc}^i_w \!: H^i(\G\sub{E/F},M \otimes_\Z O_{E,S}^\times) \to H^i(\G\sub{E/F,w},M \otimes E_w^\times)\]
given by composing the restriction map
\[ H^i(\G\sub{E/F},M \otimes_\Z O_{E,S}^\times) \to H^i(\G\sub{E/F,w},M \otimes_\Z O_{E,S}^\times)\]
with the natural inclusion $O_{E,S}^\times \to E_w^\times$.

\begin{proposition} \label{pro:tnfinloc}
Let $M \in \ul{\mc{D}}^{b,c}$ and $i \in \Z$. The following diagram commutes:
\[ \xymatrix@C=15mm{
	H^i(\G\sub{E/F},M[S_E]_0)\ar[r]^-{\cup \alpha_{3,E/F}}\ar[d]^{l^i_w}&H^{i+2}(\G\sub{E/F},M \otimes_\Z O_{E,S}^\times)\ar[d]^{\tx{loc}^i_w}\\
	H^i(\G\sub{E/F,w}\hs,M)\ar[r]^-{\cup \alpha_{E_w/F_v}}&H^{i+2}(\G\sub{E/F,w}\hs,M \otimes E_w^\times)
}\]
\end{proposition}

\begin{proof}
The inclusion $O_{E,S}^\times \to E_w^\times$ is the composition of the inclusion $O_{E,S}^\times \to \A_{E,S}^\times$,
the projection $\A_{E,S}^\times \to (E \otimes_F F_v)^\times$
and the projection $(E \otimes_F F_v)^\times \to E_w^\times$.
Let $S_{E}(v)$ denote the set of places of $E$ lying over $v$.
We claim that the following diagram commutes:
\[ \xymatrix@C=15mm{
H^i(\G\sub{E/F},M[S_E]_0)\ar[r]^-{\cup \alpha_{3,E/F}}\ar[d]&H^{i+2}(\G\sub{E/F},M \otimes_\Z O_{E,S}^\times)\ar[d]\\
H^i(\G\sub{E/F},M[S_E])\ar[r]^-{\cup \alpha_{2,E/F}}\ar[d]&H^{i+2}(\G\sub{E/F},M \otimes_\Z \A_{E,S}^\times)\ar[d]\\
H^i(\G\sub{E/F},M[S_{E}(v)])\ar[r]^-{\cup \alpha_{2,E/F,v}}\ar[d]&H^{i+2}(\G\sub{E/F},M \otimes_\Z (E \otimes_F F_v)^\times)\ar[d]\\
H^i(\G\sub{E/F,w}\hs,M)\ar[r]^-{\cup \alpha_{E_w/F_v}}&H^{i+2}(\G\sub{E/F,w}\hs,M \otimes_\Z E_w^\times).
}\]
The top left-hand vertical arrow comes from the inclusion $M[S_E]_0 \to M[S_E]$,
and the top right-hand vertical arrow comes from the inclusion $O_{E,S}^\times \to \A_{E,S}^\times$.
The commutativity of the top rectangle is due to Fact \ref{fct:tate-123}(1).

We have the decomposition of the $\G\sub{E/F}$-modules
$M[S_E]=\bigoplus_{v' \in S} M[S_{E}(v')]$ and $\A_{E,S}^\times=\prod_{v' \in S_F} (E \otimes_F F_{v'})^\times$.
By the construction of $\alpha_{2,E/F}$\hs, the isomorphism $\cup\alpha_{2,E/F}$ decomposes accordingly
as the product of the maps
\[ H^i(\G\sub{E/F},M[S_{E}(v')]) \to H^{i+2}(\G\sub{E/F},M \otimes_\Z (E \otimes_F F_{v'})^\times),\]
each of which is necessarily an isomorphism.
This establishes the commutativity of the middle rectangle,
in which the left-hand vertical arrow is the projection $M[S_E] \to M[S_{E}(v)]$,
the right-hand vertical arrow is the projection $\A_{E,S}^\times \to (E \otimes_F F_v)^\times$, and
\[ \alpha_{2,E/F,v} \in H^2\big(\G\sub{E/F},\tx{Hom}_\Z(\Z[S_{E}(v)],\A_{E,S}^\times)\big)\]
is the $v$-component of $\alpha_{2,E/F}$.

In the bottom rectangle, the vertical arrows are the Shapiro isomorphisms
(given by restriction to $\G\sub{E/F,w}$ followed by the map
induced by the homomorphism  $M[S_{E}(v)] \to M$ evaluating at $w$)  on the left
 and the homomorphism $(E \otimes_F F_v)^\times \to E_w^\times$ on the right.
The commutativity of the rectangle is due to Fact \ref{fct:tate-123}(2).
\end{proof}

\section{Compatibility with connecting homomorphisms}

Let $F$ be either local or global, and  $E/F$ be a finite Galois extension with Galois group $\G\sub{E/F}$.

\begin{proposition}
Let \, $M' \to M \to M''\to M'[1]$ \, be a distinguished triangle in $\ul{\mc{D}}^{b,c}(\G\sub{E/F})$. The following diagrams commute:
\[ \xymatrix@C=15mm{
	H^i(\G\sub{E/F},M''[S_E]_0)\ar[r]^-{\cup\alpha_{3,E/F}}\ar[d]&H^{i+2}(\G\sub{E/F},M'' \otimes_\Z O_{E,S}^\times)\ar[d]\\
	H^{i+1}(\G\sub{E/F},M'[S_E]_0)\ar[r]^-{-\cup\alpha_{3,E/F}}&H^{i+3}(\G\sub{E/F},M' \otimes_\Z O_{E,S}^\times)\\
}\]
when $F$ is global, and
\[ \xymatrix@C=16mm{
	H^i(\G\sub{E/F},M'')\ar[r]^-{\cup\alpha_{E/F}}\ar[d]&H^{i+2}(\G\sub{E/F},M'' \otimes_\Z E^\times)\ar[d]\\
	H^{i+1}(\G\sub{E/F},M')\ar[r]^-{-\cup\alpha_{E/F}}&H^{i+3}(\G\sub{E/F},M' \otimes_\Z E^\times)\\
}\]
when $F$ is local. Here the vertical arrows are the connecting homomorphisms.
\end{proposition}

\begin{proof}
This follows at once from the fact that cup products anticommute with the shift operator.
\end{proof}

\chapter{Absolute Galois cohomology in a special case}
\label{s:ab-coho}

In this chapter we will apply the results of the previous chapters in a very specific case.
Let $F$ be a local or global field, $F^s$ a separable closure of $F$, and  $\G=\tx{Gal}(F^s\hm/F)$.
Let $M^\bullet=(M^{-1} \to M^0)$ be a complex of torsion-free
finitely generated abelian groups equipped with a continuous $\G$-action
(with respect to the discrete topology on $M^{-1}$ and  $M^0$).
We assume that the map $M^{-1} \to M^0$ is injective.
Hence we have the quasi-isomorphism $M^\bullet \to M$ where $M=\tx{coker}(M^{-1} \to M^0)$.
Let $\Tbul=(T^{-1} \to T^0)$ be the associated complex of tori, that is, $X_*(T^\bullet)=M^\bullet$.
We are interested in computing the absolute Galois cohomology $H^i\big(\G,T^{-1}(F^s) \to T^0(F^s)\big)$ in degrees $i=1,2$.
Since this cohomology is defined as the injective limit
of the cohomologies $H^i\big(\G\sub{E/F},T^{-1}(E) \to T^0(E)\big)$
over all finite Galois extensions $E/F$, we can apply the Tate--Nakayama isomorphism
and reduce to computing the injective limit of $H^{i-2}(\G\sub{E/F},A)$, where $A=M$ when $F$ is local, and $A=M[V_E]_0$ when $F$ is global. In the computation of this injective limit, a key step is the identification of the transition maps.

\section{Helpful quasi-isomorphisms} \label{sub:help}

Let $E/F$ be a finite Galois extension such that the action of $\G$ on $M^{-1} \to M^0$ factors through $\G\sub{E/F}$.
In this section we will show that one can construct a complex $\tilde M^{-1} \to \tilde M^{0}$
that is quasi-isomorphic to $M^{-1} \to M^0$ and such that
in addition we can choose any one of $\tilde M^0$ or $\tilde M^{-1}$ to be a free $\Z[\G\sub{E/F}]$-module of finite rank.

If we wish $\tilde M^0$ to be such, the construction is very simple.
We choose $\tilde M^0$ to be finite rank free $\Z[\G\sub{E/F}]$-module
equipped with a surjection $\tilde M^0 \to M$,
and let $\tilde M^{-1}$ be the fiber product of $\tilde M^0 \to M^0 \from M^{-1}$.
The obvious map
\[ [\tilde M^{-1} \to \tilde M^{0}] \to [M^{-1} \to M^{0}]\]
is a quasi-isomorphism.

If we wish $\tilde M^{-1}$ to be a finite rank free $\Z[\G\sub{E/F}]$-module,
the construction is based on the previous one
and double dualization. For this we need the following lemma.

\begin{lemma} \label{lem:dual-q-iso}
Let $[M^{-1} \to M^0] \to [N^{-1} \to N^0]$ be a quasi-isomorphism
of complexes of finitely generated torsion-free $\G$-modules.
Then so is $\big[(N^0)^\vee \to (N^{-1})^\vee\big] \to \big[(M^0)^\vee \to (M^{-1})^\vee\big]$, where we recall that 
$(-)^\vee=\tx{Hom}_\Z((-),\Z)$.
\end{lemma}

\begin{proof}
Writing $K_M$ and $C_M$ for the kernel and cokernel of $M^{-1} \to M^0$,
we have the exact sequence $0 \to K_M \to M^{-1} \to M^0 \to C_M \to 0$.
With the similar notation for $N$, we are given that
the maps $K_M \to K_N$ and $C_M \to C_N$ are isomorphisms.
Writing $L_M$ and $D_M$ for the kernel and cokernel of $(M^0)^\vee \to (M^{-1})^\vee$
and using the similar notation for $N$, we want to check
that $L_N \to L_M$ and $D_N \to D_M$ are isomorphisms.

The exact sequence
\[ 0 \to C_M^\vee \to (M^0)^\vee \to (M^{-1}/K_M)^\vee \to \tx{Ext}^1_\Z(C_M,\Z) \to 0\]
identifies $L_M$ with $C_M^\vee$,
and we see that $L_N \to L_M$ is the dual of the isomorphism
$C_M \to C_N$, and hence itself an isomorphism.
At the same time, we have
\[ \tx{Ext}^1_\Z(C_M,\Z)\cong\tx{Hom}_\Z\big((C_M)_{\Tors},\Q/\Z\big)\]
(see formula \eqref{e:Ext-Hom} in Appendix \ref{app:duality}\hs),
whence $\tx{Ext}^1_\Z(C_M,\Z)$ is finite.
Noting that $M^{-1}/K_M$ is torsion-free,
from the above exact sequence we obtain the short exact sequence
\[ 0 \to \tx{Hom}_\Z((C_M)_{\Tors}\hs,\Q/\Z) \to D_M \to K_M^\vee \to 0.\]
The map $D_N \to D_M$ induces the maps $K_N^\vee \to K_M^\vee$ and
$\tx{Hom}_\Z((C_N)_{\Tors},\Q/\Z) \to \tx{Hom}_\Z((C_M)_{\Tors},\Q/\Z)$
obtained by duality from the isomorphisms $K_M \to K_N$ and $C_M \to C_N$.
We conclude that $D_N \to D_M$ is also an isomorphism.
\end{proof}

With this in mind, we apply the above construction to the complex $(M^0)^\vee \to (M^{-1})^\vee$ to obtain a quasi-isomorphic complex $N^0 \to N^1$ with $N^1$ a finite rank free $\Z[\G\sub{E/F}]$-module. By Lemma \ref{lem:dual-q-iso} the complex $(N^1)^\vee \to (N^0)^\vee$ is quasi-isomorphic to $[M^{-1} \to M^0]$. By Fact \ref{fct:fsd} the $\Z[\G\sub{E/F}]$-module $(N^1)^\vee$ is free of finite rank.

For another construction of a desired complex $$\tilde M^{-1} \to \tilde M^{0}$$
with $\tilde M^{-1}$ being $\Z[\G\sub{E/F}]$-free,
see Milne and Shih, Chapter V of \cite{DMOS82}, Lemma 3.2 on page 297.

\section{The local case}

Consider the case when $F$ is local. We wish to compute the  injective limit of $H^{i-2}(\G\sub{E/F},M)$ for $i=1,2$. If $F$ is archimedean, then $F^s/F$ is finite, and there is nothing to do. For completeness, we record the result nonetheless:

\begin{proposition}[\hs{\cite[Proposition 8.21]{BorTim24}}\hs]
	\label{p:R}
	Let $F=\R$. There are functorial isomorphisms
	\[ H^{n-2}(\G\sub{\C/\R}, M)\isoto H^n(\R,T^{-1}\to T^0)\qquad\text{for}\ \, n\in\Z.\]
	\end{proposition}

For the rest of this section, we assume that $F$ is non-archimedean.
Consider a tower $K/E/F$ of finite Galois extensions
and assume that the action of $\G$ on $M^{-1} \to M^0$
factors through $\G\sub{E/F}$.
For an arbitrary $\Z[\G\sub{E/F}]$-module $A$ we define maps
\[ ! \colon H^{-1}(E/F,A) \to H^{-1}(K/F,A)\]
and
\[ p \colon H^0(E/F,A) \to H^0(K/F,A)\]
that are functorial in $A$, as follows.
The map $!$ is given by the inclusion
\[ \tx{ker}[N_{E/F}\colon A \to A] \subset \tx{ker}[N_{K/F}\colon A \to A]\]
and the identification $A_{\G\sub{E/F}} = A_{\G\sub{K/F}}$.
The map $p$ is given by multiplication by $[K:E]$ on $A$
(which is equal to the action of the norm map $N_{K/E}$ on $A$
and the identification $A^{\G\sub{E/F}}=A^{\G\sub{K/F}}$.

\begin{lemma} \label{lem:!ploc}
The following diagram commutes:
\[ \xymatrix{
H^{-1}(E/F,M)\ar[d]^{!}\ar[r]&H^0(E/F,M^{-1})\ar[d]^p\\
H^{-1}(K/F,M)\ar[r]&H^0(K/F,M^{-1})
}\]
where the horizontal arrows are the connecting homomorphisms.
\end{lemma}

\begin{proof}
Direct computation using the standard resolution.
\end{proof}

\begin{lemma} \label{lem:tnfinloc}
The following diagrams commute:
\begin{align*}
\begin{aligned}
\xymatrix@C=15mm{
	H^{-1}(E/F,M)\ar[d]^{!}\ar[r]^-{\cup \alpha_{E/F}}&H^1(E/F,T^{-1} \to T^0)\ar[d]^{\tx{Inf}}\\
	H^{-1}(K/F,M)\ar[r]^-{\cup \alpha_{K/F}}&H^1(K/F,T^{-1} \to T^0)
}
\end{aligned}\tag{1}
\\
 \notag  \\
\begin{aligned}
 \xymatrix@C=15mm{
	H^0(E/F,M)\ar[d]^p\ar[r]^-{\cup \alpha_{E/F}}&H^2(E/F,T^{-1} \to T^0)\ar[d]^{\tx{Inf}}\\
	H^0(K/F,M)\ar[r]^-{\cup \alpha_{K/F}}&H^2(K/F,T^{-1} \to T^0)
}
\end{aligned}\tag{2}
\end{align*}
\end{lemma}

\begin{proof}
(1) According to Lemma \ref{lem:!ploc} and the functoriality of $\tx{Inf}$,
the diagram in (1) for $M$ maps to the diagram in (2) for $M^{-1}$.
If we knew that the connecting homomorphism $H^1(K/F,T^{-1} \to T^0) \to H^2(K/F,T^{-1})$ is injective,
the commutativity of (1) would follow from the commutativity of (2) applied to $M^{-1}$.
But we can place ourselves in the situation where this connecting homomorphism is injective
by replacing $[M^{-1} \to M^0]$ with a quasi-isomorphic complex
$[\tilde M^{-1} \to \tilde M^0]$ with $\tilde M^0$ a free $\Z[\G\sub{E/F}]$-module,
using the construction described in \S\ref{sub:help}.
In other words, we may assume that $M^0$ is such, from which it follows that $T^0$ is an induced torus,
and hence that $H^1(K/F,T^0)$ vanishes
(see \cite[Lemma 1.9]{Sansuc81}\hs), which implies the desired injectivity.

(2) Assume first that $M$ is torsion-free.
This is equivalent to $T^{-1} \to T^0$ being injective, in which case this complex is quasi-isomorphic
to the quotient $T=T^0/T^{-1}$, and $M=X_*(T)$.
In this setting the cup product of $\lambda \in M^{\G\sub{E/F}}$,
representing a class of $H^0(\G\sub{E/F},M)$, with $\alpha_{E/F} \in H^2(\G\sub{E/F},E^\times)$,
is given by the image of $\alpha_{E/F}$ under the map $H^2(\G\sub{E/F},E^\times) \to H^2(\G,T(E))$
induced by $\lambda \colon\mb{G}_m \to T$.
The claim then follows from the equality $\tx{Inf}(\alpha_{E/F}) = [K:E] \cdot \alpha_{K/F}$.
This completes the proof of (2) in the case when $M$ is torsion-free, and hence also the proof of (1).

The proof of (2) in the general case can be reduced to the case of $M$ torsion-free
if we can find a quasi-isomorphism $[M^{\prime\,-1} \to M^{\prime\,0}] \sim [M^{-1} \to M^0]$
for which $H^0(E/F,M^{\prime\,0}) \to H^0(E/F,M)$ is surjective, because then the diagram (2)
for $M^{\prime\,0}$ maps onto the diagram  (2) for $M$,
and the surjectivity of $H^0(E/F,M^{\prime\,0}) \to H^0(E/F,M)$
and the commutativity of the diagram for $M^{\prime\,0}$ implies the commutativity for $M$.
The surjectivity of $H^0(E/F,M^{\prime\,0}) \to H^0(E/F,M)$ would be implied
by the vanishing of $H^1(E/F,M^{\prime\,-1})$, so it is enough to find $[M^{\prime\,-1} \to M^{\prime\,0}]$ with $M^{\prime\,-1}$
a free $\Z[\G\sub{E/F}]$-module of finite rank.
The existence of such a complex $[M^{\prime\,-1} \to M^{\prime\,0}]$ was proved in \S\ref{sub:help}.
\end{proof}

The rest of this section is devoted to computing the injective limits of the groups $H^{-1}(K/F,M)$ and $H^0(K/F,M)$ along the maps $!$ and $p$, respectively, where $K$ runs over the finite Galois extensions of $F$ in $F^s$.

\begin{lemma} \label{lem:h-1loc}
The family of natural maps $H^{-1}(K/F,M) \to M_{\G}$ induces an isomorphism
\[ \varinjlim_K H^{-1}(K/F,M) \isoto (M_{\G})_{\Tors}. \]
\end{lemma}

\begin{proof}
The map $H^{-1}(K/F,M) \to M_{\G}$ is injective by construction,
and its image  lies in $(M_\G)_{\Tors}$\hs, since $H^{-1}(K/F,M)$ is $[K:F]$-torsion.
This map is also compatible with the transition maps $!$,
so we only need to check that every element of $(M_\G)_{\Tors}$
is in the image of $H^{-1}(K/F,M)$ for some $K$.
But this is the case as soon as  the order of $(M_\G)_{\Tors}$  divides $[K:E]$,
because
\[N_{K/F}=N_{E/F}\circ N_{K/E} = N_{E/F} \cdot [K:E].\qedhere\]
\end{proof}

\begin{lemma}\label{l:H0H0}
\ \\[-15pt] \label{lem:h0loc}
\begin{enumerate}
	\item[\rm(1)] The natural map $\varinjlim_K H^0(K/F,M) \to \varinjlim_K H^0(K/F,\Mtf)$ is an isomorphism.
	\item[\rm(2)] For every finite Galois extension $K/F$ containing $E$,
the map
\[ [K:F]^{-1}N_{K/F} \colon \Mtf\to (\Mtf\otimes \Q)^\G \]
induces an isomorphism
\[ N^\natural\colon (\Mtf\otimes\Q)_\G \to (\Mtf\otimes\Q)^\G, \]
and this isomorphism is independent of $K$.
	\item[\rm(3)] The family of injective maps
\begin{align*}
[K:F]^{-1}\colon\, &H^0(K/F,\Mtf)=(\Mtf)^\G/N_{K/F}(\Mtf)\, \into\, (\Mtf\otimes\Q)^\G/N^\natural(\Mtf),\\
                                 &x+N_{K/F}(\Mtf)\,\mapsto\, [K:F]^{-1}x+N^\natural(\Mtf)\quad\,\text{for}\ \,x\in M^\G
\end{align*}
induces an isomorphism
\begin{equation}\label{e:(3)}
\varinjlim_K H^0(K/F,\Mtf)\hs \isoto\hs (M_\tf\otimes\Q)^\G/N^\natural(\Mtf).
\end{equation}
\end{enumerate}
\end{lemma}

\begin{proof}
(1) Consider the commutative diagram with exact columns
\[ \xymatrix@R=5.5mm{
	H^0(E/F,M_{\Tors})\ar[r]^p\ar[d]&H^0(K/F,M_{\Tors})\ar[d]\\
	H^0(E/F,M)\ar[r]^p\ar[d]&H^0(K/F,M)\ar[d]\\
	H^0(E/F,\Mtf)\ar[r]^p\ar[d]&H^0(K/F,\Mtf)\ar[d]\\
	H^1(E/F,M_{\Tors})\ar[r]&H^1(K/F,M_{\Tors}).
}\]	
In this diagram, the bottom horizontal arrow is given by multiplication by $[K:E]$ on $M_\tx{Tors}$\hs, followed by inflation. The other three horizontal arrows are given by the map $p$ defined above.
The groups $H^i(E/F,M_{\Tors})$ are killed by multiplication by $|M_{\Tors}|$.
Taking $K$ large enough so that $|M_{\Tors}|$ divides $[K:E]$,
we see that the top and bottom horizontal maps vanish, hence the claim.

Alternatively, let
\[ 0\to M^{-1}\labelt\vk M^0\to M\to 0\]
be a resolution of $M$
where $M^{-1}$ and $M^0$ are torsion-free finitely generated (over $\Z$) $\G(F)$-modules.
Let $T^{-1}$ and $T^0$ be the $F$-tori with cocharacter groups $M^{-1}$ and $M^0$, respectively.
Consider the induced homomorphism $\vk_T\colon T^{-1}\to T^0$,
and put $T^\bullet=(T^{-1}\to T^0)$,  $T=\coker \vk_T$, and $\KK=\ker\vk_T$;
then $T$ is an $F$-torus with cocharacter group $\Mtf$\hs, and $\KK$ is a finite
$F$-group of multiplicative type (not necessarily smooth)
with character group $\tx{Hom}(M_{\Tors},\Q/\Z)$.
We have a short exact sequence of complexes
\begin{equation*}
 1\to (T^{-1}\to \im\vk_T)\to T^\bullet \to (1\to T)\to 1.
\end{equation*}
Note that we may identify $H^n(F,1\to T)=H^n(F,T)$ and
$H^n(F, T^{-1}\to \im\vk_T)=H^{n+1}(F,\KK)$,
where we write $H^n(F,\KK)$ for $H_{\rm fppf}^n(F,\KK)$.
Since fppf, \'etale, and Galois cohomology agree for tori,
see \cite[Proposition 71, Theorem 43]{Shatz},
the short exact sequence above gives rise to a cohomology exact sequence
\[H^3(F,\KK)\to H^2(F,T^\bullet)\to H^2(F,T)\to H^4(F,\KK)\]
Since $F$ is a non-archimedean local field, we have $H^3(F,\KK)=1$ and $H^4(F,\KK)=1$;
see  \cite[Proposition 3.1.2]{Rosengarten-Mem}.
We obtain an isomorphism $H^2(F,T^\bullet)\to H^2(F,T)$, which gives (1).

(2) is  immediate.

(3) The homomorphism \eqref{e:(3)} is well-defined and injective.
We have $(\Mtf\otimes\Q)^\G=(\Mtf)^\G\otimes \Q$, so each element of its target
is represented by $y \otimes q$ for some $y \in (\Mtf)^\G$ and  $q\in\Q$.
Choose $K$ so that $[K:F]\cdot q \in \Z$.
Then the element represented by $y \otimes q$ is in the image of the map of (3) for this $K$.
Thus \eqref{e:(3)} is surjective, as required.
\end{proof}

\begin{corollary} \label{cor:h0loc}
The composition
\begin{multline*}
 \varinjlim_K H^0(K/F,M) \to \varinjlim_K H^0(K/F,\Mtf) \to (\Mtf\otimes\Q)^\G/N^\natural(\Mtf)\\
\labeltoo{\,(N^\natural)^{-1}} \big((\Mtf\otimes\Q)/\Mtf\big)_\G= (\Mtf \otimes \Q/\Z)_\G = (M\otimes\Q/\Z)_\G
\end{multline*}
is an isomorphism.
\end{corollary}

\begin{proof}
The first, second, and third maps are isomorphisms by Lemma \ref{lem:h0loc} and Fact \ref{fct:372}.
The final two equalities follow from Facts \ref{fct:Q} and \ref{fct:Q/Z}.
\end{proof}

\begin{theorem}[\hs\cite{Brv98}, Proposition 4.1]
\label{thm:tnloc}
For a non-archimedean local field $F$, there are functorial isomorphisms
\begin{align}
&(M_\G)_{\Tors} \isoto H^1(F,T^{-1} \to T^0),\tag{1}\\
&(M\otimes\Q/\Z)_\G \isoto H^2(F,T^{-1} \to T^0),\tag{2}\\
&H^n(F,T^{-1} \to T^0)=\{0\}\ \, \text{for}\ \, n\ge 3.\tag{3}
\end{align}
\end{theorem}

\begin{proof}
We obtain (1) from Lemmas \ref{lem:tnfinloc}(1) and \ref{lem:h-1loc}.
We obtain (2) from Lemma \ref{lem:tnfinloc}(2) and Corollary \ref{cor:h0loc}.
We obtain (3) from the fact that
the strict cohomological dimension of $F$ is 2, cf. \cite[Corollary 7.2.5]{NSW08}.
\end{proof}

\section{The global case}
\label{ss:3-global}

Consider now the case when $F$ is global.
Recall that $M=\coker[M^{-1}\to M]$.
Let $K/E/F$ be a tower of finite Galois extensions
and assume that the $\G$-action on $M^{-1} \to M^0$ factors
through $\G\sub{E/F}$ \and that $E$ has no real places.
Let $\V_F$ denote the set of all places of $F$.
We will apply results of Chapter \ref{sec:tniso} in the case $S_F=\V_F$; then $S_E=\V_E$.

For an arbitrary $\Z[\G\sub{E/F}]$-module $A$, we follow \S\ref{ss:E/F} and write $\Z[V_E]$
for the free abelian group on the set of places $V_E$ of $E$
equipped with the $\Gamma$-action induced from the $\Gamma$-action on $V_E$.
We write $A[V_E]=A\otimes_\Z\Z[V_E]$, and we write  $A[V_E]_0$
for the kernel of the augmentation map $A[V_E] \to A$ sending $\sum a_w[w] \mapsto \sum a_w$.

We now define three maps:\vskip-20pt
\begin{gather*}
    \beta\colon A[\V_K] \to A[\V_E],
\\
     p\colon H^0(E/F,A[\V_E]_0) \to H^0(K/F,A[\V_K]_0),
\\
     ! \colon (A[\V_E]_0)_{\G\sub{E/F}} \to (A[\V_K]_0)_{\G\sub{K/F}}.
\end{gather*}\vskip-5pt

The map $\beta$ is the tensor product of the identity on $A$ and the natural projection $\V_K \to \V_E$. It is $\G\sub{K/F}$-equivariant.

The map $p$ is defined in the special case of the trivial module $A=\Z$ as
\[ p\colon \Z[\V_E] \to \Z[\V_K],\qquad [w]\hs \mapsto \sum_{u|w} |\G\sub{K/E,u}|\cdot[u]. \]
For general $A$ it is given by tensoring this special case with the identity on $A$.

To check that $p$ induces a map on Tate cohomology, and to describe the map $!$,
we fix a section $s \colon \V_E \to \V_K$ and let $s! \colon A[\V_E]_0 \to A[\V_K]_0$ be the tensor product
of the identity on $A$ and the linear extension $\Z[\V_E] \to \Z[\V_K]$ of $s$.
One checks that $p$ is $\Gamma_{K/F}$-equivariant, that $\epsilon_{K/F} \circ p = [K:E] \cdot \epsilon_{E/F}$
where $\epsilon_{K/F} : \Z[V_K] \to \Z$ and $\epsilon_{E/F} : \Z[V_E] \to \Z$ are the augmentation maps,
and that $p \circ N_{E/F} = N_{K/F} \circ s!$.
This implies that $p$ induces a map $H^0(E/F,A[\V_E]_0) \to H^0(K/F,A[\V_K]_0)$ as desired. The following lemma describes the map $!$.

\begin{lemma} \label{lem:!betaglob}
The map $s!$ induces a homomorphism
$$(A[\V_E]_0)_{\G\sub{E/F}} \to (A[\V_K]_0)_{\G\sub{K/F}}\hs,$$
which is independent of the choice of the section $s$ and will be denoted by $!$.
It is an isomorphism whose inverse is given by $\beta$.
Moreover, $!$ restricts to a homomorphism
\[ H^{-1}(\G(E/F),A[\V_E]_0) \to H^{-1}(\G(K/F),A[\V_K]_0).\]
\end{lemma}

\begin{proof}
In the proof of \cite[Lemma 3.1.7]{KalGRI} it is shown that the map $s!$ induces a well-defined homomorphism $! \colon (A[\V_E]_0)_{\G\sub{E/F}} \to (A[\V_K]_0)_{\G\sub{K/F}}$ which is independent of the choice of a section $s$ and furthermore restricts to a homomorphism $H^{-1}(\G(E/F),A[\V_E]_0) \to H^{-1}(\G(K/F),A[\V_K]_0)$. The fact that the composition $\beta\circ !$ is the identity on $(A[\V_E]_0)_{\G\sub{E/F}}$ is immediate from the definition. That $! \circ \beta$ is the identity on $(A[\V_K]_0)_{\G\sub{K/F}}$ follows from the argument in the first paragraph of the proof of \cite[Lemma 3.1.7]{KalGRI}.
\end{proof}

\begin{fact} \label{fct:p!n}
The map $p$ is equal to the composition of the map $s!$ and the norm map for the action of $\G\sub{K/E}$. Consequently, the identity $p \circ N_{E/F} = N_{K/F} \circ !$ holds.
\end{fact}

\begin{lemma} \label{lem:!pglob}
For $M$ as in the beginning of this section,
the following diagram commutes:
\[ \xymatrix{
H^{-1}(E/F,M[\V_E]_0)\ar[d]^{!}\ar[r]^-\delta  &H^0(E/F,M^{-1}[\V_E]_0)\ar[d]^p\\
H^{-1}(K/F,M[\V_K]_0)\ar[r]^-{\delta'}    &H^0(K/F,M^{-1}[\V_K]_0),
}\]
where the horizontal arrows are the connecting homomorphisms.
\end{lemma}

\begin{proof}
As in the local case, this is a direct computation using the standard resolution,
albeit a tad longer. For completeness, we include this computation.
The connecting homomorphisms are given by the norm maps
for the action of $\G\sub{E/F}$ (resp. $\G\sub{K/F}$\hs) on $M^0[V_E]_0$ (resp. $M^0[V_K]_0$\hs).
Consider an element $\sum_w m_w[w] \in M[\V_E]_0$ representing a class in $H^{-1}(\G,M[\V_E]_0)$.
For each $m_w \in M$ we choose a lift $m_w^0 \in M^0$.
We abbreviate $\Delta=\tx{Gal}(K/E)$, $\G=\tx{Gal}(E/F)$, $\G'=\tx{Gal}(K/F)$.
Then
\begin{eqnarray*}
\delta's_!\Big(\sum_w m_w[w]\Big)&=&\delta'\Big(\sum_w m_w [s(w)]\Big)\\
&=&\sum_{\sigma' \in \G'}\sigma'\Big(\sum_w m^0_w [s(w)]\Big)\\
&=&\sum_{\sigma \in \G} \sum_{\tau \in \Delta} \sum_w \sigma(m^0_w) [\sigma\tau s(w)]\\
&=&\sum_w  \sum_{\sigma \in \G} \sigma(m^0_{\sigma^{-1}w}) \sum_{w'|w} |\Delta_{w'}|[w']\\
&=&\sum_{w' \in \V_{K}}\sum_{\sigma \in \G} \sigma(m^0_{\sigma^{-1}p(w')}) |\Delta_{w'}|[w']\\
&=&p_{K/E}\Big( \sum_w \sum_{\sigma\in\G}\sigma(m^0_{\sigma^{-1}w})[w] \Big)\\
&=&p_{K/E}\Big( \sum_{\sigma\in\G} \sigma \sum_w m^0_w[w] \Big)\\
&=&p_{K/E}\delta\Big(\sum_w m_w[w]\Big).
\end{eqnarray*}
\end{proof}

\begin{lemma} \label{lem:tnfinglob}
The following diagrams commute:
\begin{equation}\tag{1}
\begin{aligned}
 \xymatrix@C=15mm{
	H^{-1}(E/F,M[\V_E]_0)\ar[d]^{!}\ar[r]^-{\cup \alpha_{3,E/F}}&H^1(E/F,T^{-1} \to T^0)\ar[d]^{\tx{Inf}}\\
	H^{-1}(K/F,M[\V_K]_0)\ar[r]^-{\cup \alpha_{3,K/F}}&H^1(K/F,T^{-1} \to T^0)\\
}\end{aligned}
\end{equation}

\begin{equation}\tag{2}
\begin{aligned}
\xymatrix@C=15mm{
	H^0(E/F,M[\V_E]_0)\ar[d]^p\ar[r]^-{\cup \alpha_{3,E/F}}&H^2(E/F,T^{-1} \to T^0)\ar[d]^{\tx{Inf}}\\
	H^0(K/F,M[\V_K]_0)\ar[r]^-{\cup \alpha_{3,K/F}}&H^2(K/F,T^{-1} \to T^0).\\
}\end{aligned}
\end{equation}

\end{lemma}

\begin{proof}
The proof proceeds as in the local case, that is, as the proof of Lemma \ref{lem:tnfinloc}, but with Lemma \ref{lem:!ploc} replaced by Lemma \ref{lem:!pglob}. Upon replacing $[M^{-1} \to M^0]$ with a quasi-isomorphic complex $[\tilde M^{-1} \to \tilde M^0]$ with $\tilde M^0$ a finite rank free $\Z[\G\sub{E/F}]$-module, the proof of (1) is reduced to the proof of (2) for $M$ torsion-free.

On the other hand, the proof of (2) in the general case can also be reduced to the proof of (2) for $M$ torsion-free, by replacing $[M^{-1} \to M^0]$ with a quasi-isomorphic complex $[M^{\prime\,-1} \to M^{\prime\,0}]$ with $M^{\prime\,-1}$ a finite rank free $\Z[\G\sub{E/F}]$-module using the construction of \S\ref{sub:help}. Then $M^{\prime\,-1}[\V_E]_0$ is an induced $\Z[\G\sub{E/F}]$-module and hence $H^1(E/F,M^{\prime\,-1}[\V_E]_0)$ vanishes.

Finally, the validity of (2) when $M$ is torsion-free is \cite[Lemma 3.1.4]{KalGRI}.
\end{proof}

The rest of this section is devoted to computing the injective limits of the systems $H^{-1}(K/F,M[V_K]_0)$ and $H^0(K/F,M[V_K]_0)$ along the maps $!$ and $p$, respectively.

Given a $\G\sub{E/F}$-module $A$, let us write $(A[\Vbar]_0)_\G$ for the limit of the system of groups $(A[\V_K]_0)_{\G\sub{K/F}}$ with transition maps given by $\beta$, equivalently its inverse $!$. Since these transition maps are isomorphisms, we can identify $(A[\Vbar]_0)_\G$ with any one of the terms $(A[\V_K]_0)_{\G\sub{K/F}}$ in that system.

Next we define a ``localization map'' $l_w \colon A[\V_E]_{\G\sub{E/F}} \to A_{\G\sub{E_w/F_v}}$
for any $w \in \V_E$ with image $v \in V_F$, by
\[ l_w\Big(\sum_{w'\in V_E} a_{w'}[w']\Big) = \sum_\sigma \sigma(a_{\sigma^{-1}w}), \]
where the right-hand sum runs over a set of representatives $\sigma$ in $\G\sub{E/F}$
for the coset space $\G\sub{E_w/F_v} \lmod \G\sub{E/F}$.
We can also express $l_w$ as the composition of
$$N_{\G\sub{E_w/F_v} \lmod \G\sub{E/F}} :\, A[\V_E]_{\G\sub{E/F}} \to A[\V_E]_{\G\sub{E_w/F_v}}$$
and the map $A[\V_E]_{\G\sub{E_w/F_v}} \to A_{\G\sub{E_w/F_v}}$ induced by the
 evaluation-at-$w$ map $A[\V_E] \to A$.

\begin{fact} \label{fct:locfin}
Let $u \in \V_K$ lie above $w \in \V_E$. Then the following diagram commutes:
\[ \xymatrix{
	A[\V_E]_{\G\sub{E/F}}\ar[r]^{l_w}\ar[d]^{!}&A_{\G\sub{E_w/F_v}}\ar@{=}[d]\\
	A[\V_K]_{\G\sub{K/F}}\ar[r]^{l_u}&A_{\G\sub{K_u/F_v}}
}\]
\end{fact}

For a place $\vbar $ of $F^s$ over a place $v$ of $F$, we obtain from Fact \ref{fct:locfin} a localization map
\begin{equation} \label{eq:loc}
l_{\vbar} \colon (A[\Vbar]_0)_\G \to A_{\G\sss{\vbar}}.	
\end{equation}

\begin{definition}\label{d:F1}
Let $\mc{F}^1(A/F)$, or just $\mc{F}^1(A)$ when $F$ is understood, be the fiber product of
\[ (A[\Vbar]_0)_{\G,{\Tors}} \lra \prod_{v|\infty}A_{\G\sss{\vbar},{\Tors}} \lla \prod_{v |\infty} H^{-1}(\G\sss{\vbar},A), \]
where the arrow at left is the product of $l_{\vbar}$ and the arrow at right is that of Fact \ref{fct:h-1}.
\end{definition}

\begin{remark}\label{r:injective}
Since the arrow at right in Definition \ref{d:F1} is injective by Fact \ref{fct:h-1}, so is the homomorphism
$\mc{F}^1(A) \to (A[\Vbar]_0)_{\G,{\Tors}}$\hs,
and we identify $\mc{F}^1(A)$ with its image in $(A[\Vbar]_0)_{\G,{\Tors}}$\hs.
When $A$ is torsion-free,  the arrow at right is an isomorphism by Fact \ref{fct:h-1},
and therefore  the inclusion $\mc{F}^1(A) \subset (A[\Vbar]_0)_{\G,{\Tors}}$  is in fact an equality, that is, we have
$ \mc{F}^1(A) = (A[\Vbar]_0)_{\G,{\Tors}}$\hs. Moreover, when $F$ is a function field, we have $ \mc{F}^1(A) = (A[\Vbar]_0)_{\G,{\Tors}}$\hs.
\end{remark}

\begin{lemma} \label{lem:h-1glob}
For $M$ as in the beginning of this section,
the family of natural maps $H^{-1}(K/F,M[\V_K]_0) \to (M[\V_K]_0)_\G$ induces an isomorphism
\begin{equation}\label{e:lim^-1}
\varinjlim_K H^{-1}(K/F,M[\V_K]_0) \isoto \mc{F}^1(M)
\end{equation}
where the injective limit is taken along the maps $!$\hs.
\end{lemma}

\begin{proof}
It follows from Lemma \ref{lem:!betaglob} that we have a well-defined injective map
$H^{-1}(K/F,M[\V_K]_0) \to (M[\Vbar]_0)_{\G}$.
One checks directly that its image lands in $\mc{F}^1(M)$. We must prove that the constructed injective map \eqref{e:lim^-1} is surjective.
Consider an element $x=\sum_w m_w\cdot w \in M[\V_E]_0$ that lies in $\mc{F}^1(M)$.
We want to find $K$ such that $!(x) \in M[\V_K]_0$ is killed by $N_{K/F}$.
This is equivalent to $\sum_{\sigma \in \G\sub{K/F}}\sigma(m_{\sigma^{-1}w})=0$ for all $w \in \V_E$.
Since $x$ is supported at only finitely many $w$, we can choose $K$
so that at each such $w$ that is non-archimedean, the number $|\G\sub{K/E,w}|$
is a multiple of the order of $[x]\in(M[\Vbar]_0)_{\G,{\Tors}}$.
The desired identity $\sum_{\sigma \in \G\sub{K/F}}\sigma(m_{\sigma^{-1}w})=0$ then holds for all non-archimedean $w$.
Since the image of $x$ in $\prod_{v|\infty}M_{\G\sss{\vbar}}$
is contained in the image of the arrow at right in Definition \ref{d:F1}, this identity  holds also for all archimedean (complex) places $w$ of $E$
(recall that by assumption $E$ has no real places).
\end{proof}

Note that for a $\G(E/F)$-module $A$,
the evaluation-at-$w$ map $\tx{ev}_w \colon A[\V_E] \to A$ satisfies
$\sigma \circ \tx{ev}_w \circ \sigma^{-1}=\tx{ev}_{\sigma w}$ for $\sigma \in \G\sub{E/F}$
and hence descends to a map $\tx{ev}_w \colon H^0(E/F,A[\V_E]) \to H^0(E_w/F_v,A)$.

Recall from \eqref{eq:xi} the homomorphism
$\xi_\G \colon H^0(\Gamma,A) \to (A\otimes\Q/\Z)_\G$ for any $\Gamma$-module $A$.
When $\Gamma=\Gamma(E/F)$ we will write $\xi_{E/F}$ in place of $\xi_{\Gamma(E/F)}$.

\begin{fact} \label{fct:evloc}
The following diagram commutes:
\[ \xymatrix@C=13mm{
H^0(E/F,M[\V_E])\ar[r]^{\xi_{E/F}}\ar[d]^{\tx{ev}_w}&(M[\V_E]_0 \otimes \Q/\Z)_{\G\sub{E/F}}\ar[d]^{l_w}\\
H^0(E_w/F_v,M)\ar[r]^{\xi_{E_w/F_v}}&(M\otimes\Q/\Z)_{\G\sub{E_w/F_v}}
} \]
\end{fact}

\begin{definition}\label{d:F2}
Let $\mc{F}^2(A/F)$, or just $\mc{F}^2(A)$ when $F$ is understood, be the fiber product of
\[ (A[\Vbar]_0 \otimes \Q/\Z)_\G \lra \prod_{v|\infty} (A \otimes \Q/\Z)_{\G\sss{\vbar}}
    \lla \prod_{v|\infty} H^0(\G\sss{\vbar},A), \]
where the arrow at left is the product of $l_{\vbar}$ for all $v|\infty$, and the arrow at right is the product of $\xi_{\G\sss{\vbar}}$.
\end{definition}

\begin{remark}\label{r:F^2}
When $A$ is torsion-free, by Remark \ref{r:xi} the arrow at right in the definition above is injective,
and therefore so is the homomorphism $\cF^2(A)\to (A[\Vbar]_0 \otimes \Q/\Z)_\G$\hs. Moreover, when $F$ is a function field, we have $\cF^2(A)=(A[\Vbar]_0 \otimes \Q/\Z)_\G$\hs.
\end{remark}

For $M$ as in the beginning of this section,
according to Fact \ref{fct:evloc},
the map
\[ \xi_{K/F} \colon H^0(K/F,M[\V_K]_0) \to (M[\V_K]_0 \otimes \Q/\Z)_{\G\sub{K/F}} \cong (M[\Vbar]_0 \otimes \Q/\Z)_\G \]
together with the evaluation maps $\tx{ev}_{\vbar} \colon H^0(K/F,M[\V_K]_0) \to H^0(\G\sss{\vbar},M)$
for $v|\infty$ induce a map
\[ H^0(K/F,M[\V_K]_0) \to \mc{F}^2(M).\]

\begin{lemma} \label{lem:h0glob}
The family of these maps  induces an isomorphism
\[ \varinjlim_K H^0(K/F,M[\V_K]_0) \isoto \mc{F}^2(M) \]
where the  injective limit is taken along the maps $p$.
\end{lemma}

We will need the following known lemma:

\begin{lemma}[\hs{\cite[Chapter III, \S4, Exercise 8(a)]{MacLane71}}\hs]
\label{l:MacLane}
In a diagram
\[ \xymatrix{
{*}\ar[r]\ar[d]&{*}\ar[r]\ar[d]&{*}\ar[d]\\
{*}\ar[r]&{*}\ar[r]&{*}}
\]
if the two inner squares are Cartesian, then so is the outer rectangle.
\end{lemma}

\begin{proof}[Proof of Lemma \ref{lem:h0glob}]
We split the proof into two parts. The first part is the claim that the following diagram
\begin{equation}\label{e:lim-lim}
\begin{aligned}
\xymatrix{
\varinjlim_K H^0(K/F,M[\V_K]_0)\ar[r]\ar[d]&\varinjlim H^0(K/F,\Mtf[\V_K]_0)\ar[d]\\
\prod_{v|\infty}H^0(K_{\vbar}/F_v,M)\ar[r]&\prod_{v|\infty}H^0(K_{\vbar}/F_v,\Mtf)
}
\end{aligned}
\end{equation}
is Cartesian, where  $\Mtf=M/M_{\Tors}$.
Lemma \ref{l:MacLane} reduces the proof of our lemma to the case when $M$ is torsion free.
This is then the second part of the proof.

We commence with the first part.
Let
\[ 0\to M^{-1}\labelt\vk M^0\to M\to 0\]
 be a resolution of $M$
where $M^{-1}$ and $M^0$ are torsion-free finitely generated (over $\Z$) $\G$-modules.
Let $T^{-1}$ and $T^0$ be the $F$-tori with cocharacter groups $M^{-1}$ and $M^0$, respectively.
Consider the induced homomorphism $\vk_T\colon T^{-1}\to T^0$,
and put $T^\bullet=(T^{-1}\to T^0)$,  $T=\coker \vk_T$, and $\KK=\ker\vk_T$;
then $T$ is an $F$-torus with cocharacter group $\Mtf$ and $\KK$ is a finite
$F$-group of multiplicative type (not necessarily smooth)
with character group $\tx{Hom}(M_{\Tors},\Q/\Z)$.
We have a short exact sequence of complexes
\begin{equation*}
 1\to (T^{-1}\to \im\vk_T)\to T^\bullet \to (1\to T)\to 1.
\end{equation*}
Note that we may identify $H^n(F,1\to T)=H^n(F,T)$ and
$H^n(F, T^{-1}\to \im\vk_T)=H^{n+1}(F,\KK)$, where we write $H^n(F,\KK)$ for $H_{\rm fppf}^n(F,\KK)$.
Using the fact that fppf, \'etale, and Galois cohomology agree for tori, see \cite[Proposition 71, Theorem 43]{Shatz},
the short exact sequence above gives rise to a commutative diagram with exact rows
\[
\xymatrix@C=5.6mm{
H^1(F,T)\ar[r]\ar@{->>}[d] &H^3(F,\KK)\ar[r]\ar[d]^-\cong &H^2(F,T^\bullet)\ar[r]\ar[d]\ar@{}[dr]|{\fstar} &H^2(F,T)\ar[r]\ar[d] &H^4(F,\KK)\ar[d]^-\cong\\
\prod\limits_{\infty}\!H^1(F_v,T)\ar[r] &\prod\limits_{\infty}\!H^3(F_v,\KK)\ar[r] &\prod\limits_{\infty}\!H^2(F_v,T^\bullet)\ar[r] &\prod\limits_{\infty}\!H^2(F_v,T)\ar[r] &\prod\limits_{\infty}\!H^4(F_v,\KK).
}
\]
in which the products in the bottom row are over the set of infinite places of $F$.
In this diagram, the vertical arrows labelled with \,$\scriptstyle \cong$
\,are isomorphisms: when $F$ is a number field, this follows
from the Tate--Nakayama theory, see, for instance, \cite[(8.6.10)(ii)]{NSW08},
and when $F$ is a function field, this follows from the fact that  $H^n(F,\KK)=0$ for $n>2$
by \cite[Proposition 3.1.2]{Rosengarten-Mem}.
The leftmost vertical arrow is a surjection; see, for instance, Sansuc \cite[(1.8.2)]{Sansuc81}.
Now a diagram chase shows that rectangle {\SMALL $\fstar$} is Cartesian.
Using the generalization of Tate's isomorphism \cite{Tate66} to complexes of tori
constructed in \S\ref{sub:tiso} and its compatibility with localization due to Proposition \ref{pro:tnfinloc},
we can identify this rectangle with \eqref{e:lim-lim},
which shows that rectangle \eqref{e:lim-lim} is Cartesian as well.
With this, the first part of the proof of this lemma,
namely the reduction to the case when $M$ is torsion-free,  is complete.

We now come to the second part, namely the proof of the lemma under the assumption that $M$ is torsion-free. We need to show that the following diagram is Cartesian:
\begin{equation}\label{e:tf-Cartesian}
\begin{aligned}
 \xymatrix@C=14mm{
	\varinjlim\limits_E H^0(E/F,M[\V_E]_0)\ar[r]^-{(\xi_{E/F})_E}\ar[d]^{(\tx{ev}_{\vbar})_{\vbar}}
&(M[\Vbar]_0\otimes\Q/\Z)_\G\ar[d]^{(l_{\vbar})_{\vbar}}\\
	\prod\limits_{v|\infty}H^0(\G\sss{\vbar},M)\ar[r]^-{\xi_{K_{\vbar}/F_v}}&\prod\limits_{v|\infty}(M\otimes\Q/\Z)_{\G\sss{\vbar}},
}
\end{aligned}
\end{equation}
where the colimit on the top left is taken with respect to the maps $p$.

For this recall that the map
\[ \xi_{E/F} \colon H^0(E/F,M[\V_E]_0) \to (M[\Vbar]_0 \otimes \Q/\Z)_{\G}\]
was defined in \eqref{eq:xi} as the composition of following three maps:
first, the natural inclusion
\[ H^0(E/F,M[\V_E]_0) \to (M \otimes \Q)[\V_E]_0^{\G\sub{E/F}}/N_{E/F}(M[\V_E]_0),\]
second, the inverse of the isomorphism
\[ N_{E/F}\colon (M[\V_E]_0 \otimes \Q/\Z)_{\G\sub{E/F}} \to
    \big((M \otimes \Q)[\V_E]_0\big)^{\G\sub{E/F}}/N_{E/F}(M[\V_E]_0),\]
and third, the identification
\[ (M[\V_E]_0 \otimes \Q/\Z)_{\G\sub{E/F}} \to (M[\Vbar]_0 \otimes \Q/\Z)_{\G}\]
(coming from the definition of the latter).
Accordingly, it would be enough to show that
the following three diagrams are well-defined, commutative, and Cartesian:
\[
\xymatrix{	\varinjlim\limits_E (M[\V_E]_0\otimes\Q/\Z)_{\G\sub{E/F}}\ar[r]\ar[d]^{(l_{\vbar})_{\vbar}}& (M[\Vbar]_0 \otimes \Q/\Z)_\G\ar[d]^{(l_{\vbar})_{\vbar}}\\
\prod\limits_{v|\infty}(M\otimes\Q/\Z)_{\G\sss{\vbar}}\ar@{=}[r]&\prod\limits_{v|\infty}(M\otimes\Q/\Z)_{\G\sss{\vbar}},
}\]
where the colimit on the top left is taken with respect to the maps $!$,
\[
\xymatrix@C=15mm{	\varinjlim\limits_E \big(M[\V_E]_0\otimes\Q/\Z\big)_{\G\sub{E/F}}\!\ar[r]^-{(N_{E/F})_E}\ar[d]^{(l_{\vbar})_{\vbar}}
          & \varinjlim\limits_E\big((M\otimes\Q)[\V_E]_0\big)^{\G\sub{E/F}}/N_{E/F}(M[\V_E]_0)\ar[d]^{(\tx{ev}_{\vbar})_{\vbar}}\\
\prod\limits_{v|\infty}(M\otimes\Q/\Z)_{\G\sss{\vbar}}\ar[r]^-{(N_{K_{\vbar}/F_v})_{\vbar}}
          &\prod\limits_{v|\infty}(M\otimes\Q)^{\G\sss{\vbar}}/N_{\G\sss{\vbar}}(M),
}\]
where the colimit on the top left is taken with respect to the maps $!$, and on the top right with respect to the maps $p$,
and
{\small
\[ \xymatrix@C=7mm{
\varinjlim_E \big(M[\V_E]_0\big)^{\G\sub{E/F}}\!/N_{E/F}(M[\V_E]_0) \ar[r]\ar[d]^-{(\tx{ev}_{\vbar})_{\vbar}}
    &\varinjlim_E\big((M\otimes\Q)[\V_E]_0\big)^{\G\sub{E/F}}\!/N_{E/F}(M[\V_E]_0)\ar[d]^-{(\tx{ev}_{\vbar})_{\vbar}}\\
\prod\limits_{v|\infty} M^{\G\sss{\vbar}}/N_{\G\sss{\vbar}}(M)\ar[r]
    &\prod\limits_{v|\infty} (M \otimes \Q)^{\G\sss{\vbar}}/N_{\G\sss{\vbar}}(M),
}\]
}
where the colimits on the top left and right are taken with respect to the maps $p$.

We first consider the question of being  well-defined, which means that the top horizontal maps for each level $E/F$ are compatible with the transition maps from $E/F$ to $K/F$ and this induces maps on the colimits. For the first of the three diagrams, this is immediate from the definition of $M[\bar V]_0$. For the second diagram, this becomes the equation $p \circ N_{E/F} = N_{K/F} \circ !$, which is Fact \ref{fct:p!n}. For the third diagram the claim is clear since the transition maps in both top corners are given by $p$.

We now consider commutativity and being  Cartesian. For the first diagram the claim is a tautology.
For the second diagram, we note that the two horizontal arrows are both isomorphisms,
so it is enough to show that the diagram commutes, which is an immediate computation.
For the third diagram, it is clear that it commutes, but we need to show that it is Cartesian.
For this, we note that the horizontal arrows are injective because $M$ is torsion-free,
and therefore, the homomorphism  from
$\varinjlim_E M[\V_E]_0^{\G\sub{E/F}}/N_{E/F}(M[\V_E]_0)$ to the fiber product is injective.
We must prove that it is surjective.
Consider the diagram for fixed $E$:
\[ \xymatrix@C=12mm{
 \big(M[\V_E]_0\big)^{\G\sub{E/F}}\!/N_{E/F}(M[\V_E]_0) \ar[r]\ar[d]
    &\big((M\otimes\Q)[\V_E]_0\big)^{\G\sub{E/F}}\!/N_{E/F}(M[\V_E]_0)\ar[d]\\
\prod\limits_{v|\infty} M^{\G\sss{\vbar}}/N_{\G\sss{\vbar}}(M)\ar[r]
    &\prod\limits_{v|\infty} (M \otimes \Q)^{\G\sss{\vbar}}/N_{\G\sss{\vbar}}(M).
}\]
An element of the top right corner whose image in the bottom right corner comes from the bottom left corner is represented by
\[ y=\sum_w m_w\!\cdot w\in \big((M \otimes \Q)[\V_E]_0\big)^{\G\sub{E/F}} \]
with $m_w \in M \otimes \Q$ and $\sum m_w=0$, such that, for each $v|\infty$, $m_{\bar v} \in M^{\G\sub{\bar v}}$.
The $\G\sub{E/F}$-invariance of $y$ implies that $m_w \in M^{\G\sub w}$ for all $w|\infty$.
Now we choose a large enough finite Galois extension $K/F$ containing $E$
so that $[K_u:E_w]\cdot m_w \in M$ for the finitely many places $w$
that are non-archimedean and for which $m_w \neq 0$. Thus after passing to the colimit, the homomorphism to the fiber product becomes surjective,
which shows that diagram \eqref{e:tf-Cartesian} is Cartesian and completes the proof of Lemma \ref{lem:h0glob}.
\end{proof}

\begin{theorem} \label{thm:tnglob}
There are functorial isomorphisms
\begin{align}
&H^1(F, T^{-1}\to T^0)
\,\longisot\, \mc{F}^1(M) ,\tag{1}\\
&H^2(F, T^{-1}\to T^0)
\,\longisot\, \mc{F}^2(M),\tag{2}\\
&H^i(F,T^{-1}\to T^0)
\,\longisot\!\!\!\! \prod_{v\in\V_\R(F)}\!\!\!\! H^{i-2}(F_v,M)\quad\text{for all}\ \, i\ge 3.\tag{3}
\end{align}
\end{theorem}

\begin{proof}
We obtain isomorphisms (1) and (2) from  Lemmas \ref{lem:h-1glob} and \ref{lem:h0glob}, respectively,
and the definition of the Tate--Nakayama isomorphism from \S\ref{sub:tiso}.
We prove (3).
We abbreviate
\[ H^i(F,T^\bullet)\coloneqq H^i(F,T^{-1} \to T^0) = H^i\big(F,\, (M^{-1}\to M^0)\otimes F^{s\,\times}\big), \]
and
\[ H^i(F_\infty,\,\cdot\, )\coloneqq\!\! \prod_{v\in\V_\infty(F)}\!\! H^i(F_v,\,\cdot\, )
        =\!\!\prod_{v\in\V_\R(F)}\!\! H^i(F_v,\,\cdot\, ).\]
We have an exact commutative diagram
{\small
\[
\xymatrix@C=6mm{
H^i(F,T^{-1})\ar[r]\ar[d]^-\cong &H^i(F,T^{0})\ar[r]\ar[d]^-\cong  &H^i(F,T^\bullet)\ar[r]\ar[d]
       &H^{i+1}(F,T^{-1})\ar[r]\ar[d]^-\cong   &H^{i+1}(F,T^{0})\ar[d]^-\cong\\
 H^i(F_\infty,T^{-1})\ar[r] & H^i(F_\infty,T^{0})\ar[r] & H^i(F_\infty,T^\bullet)\ar[r]
       & H^{i+1}(F_\infty,T^{-1})\ar[r]   & H^{i+1}(F_\infty,T^{0})
}
\]
}
where the vertical arrows are localization maps.
In this diagram, for $i\ge 3$  the four vertical arrows labeled with $\scriptstyle\cong$
are isomorphisms by \cite[Corollary I.4.21]{Milne-ADT},
and by the five lemma the middle (unlabeled) vertical arrow is an isomorphism as well.
It remains to observe that by Proposition \ref{p:R} we have a canonical isomorphism
\[ \prod_{v\in\V_\R(F)}\!\! H^{i-2}(F_v,M )\,\isoto\!\!
    \prod_{v\in\V_\R(F)}\!\! H^i(F_v,T^\bullet ) = H^i(F_\infty,T^\bullet).\qedhere\]
\end{proof}

\section{$H^2$ for a finite  $F$-group scheme of multiplicative type}
\label{ss:B-finite}
Let $B$ be a finite group scheme of multiplicative type, not necessarily smooth, over a field $F$.
By this we mean a finite $F$-group scheme that  becomes
diagonalizable over $\Fbar$ in the sense of \cite[Exp. VIII, Definition 1.1]{SGA3}.
Thus there exists a surjective homomorphism $T^{-1}\to T^0$ of tori over a field $F$ whose kernel is $B$.
Set
\[M^{-1}=X_*(T^{-1}),\quad\ M^{0}=X_*(T^{0}),\quad\ M=\coker[M^{-1}\to M^0].\]
By Lemma \ref{l:M-via-K} in Appendix \ref{app:duality}, the finite abelian group $M$ is canonically isomorphic to $\Hom(X^*(B),\Q/\Z)$
where $X^*(B)$ is the character group of $B$.

\begin{corollary}
When $F$ is a global field,
for $B$ and $M$ as above there is  a canonical isomorphism
\[H^2_{\rm fppf}(F,B)\cong \cF^1(M).\]
\end{corollary}

\begin{proof}
We have $H^2_{\rm fppf}(F,B)\cong H^1(F,T^{-1}\to T^0)$,
and by Theorem \ref{thm:tnglob}(1) we have $H^1(F,T^{-1}\to T^0)\cong \cF^1(M)$.
\end{proof}

\begin{remark}
Similarly, when $F$ is a non-archimedean local field, by Theorem \ref{thm:tnloc}(1) we have
\[H^2_{\rm fppf}(F,B)\cong H^1(F,T^{-1}\to T^0)\cong M_\Gt=M_\G.\]
This can be also deduced from the Tate--Nakayama duality for $B$.
Indeed, write $B'=X^*(B)$. Then by the local duality theorem we have
\[ H^2_{\rm fppf}(F, B)\cong \Hom(H^0_{\rm fppf}(F,B'),\Q/\Z)= \Hom(B^{\prime\,\G}, \Q/\Z);\]
see \cite[Theorem (7.2.6)]{NSW08} or \cite[Theorem 10.9]{Harari20} in characteristic 0,
and \cite[Theorem 1.2.2]{Rosengarten-Mem} in positive characteristic.
Moreover, by Lemma \ref{l:A^*}(2) in Appendix \ref{app:duality} we have
\[\Hom(B^{\prime\,\G}, \Q/\Z)\cong\Hom(B',\Q/\Z)_\G\cong M_\G,\]
because $\Hom(B',\Q/\Z)\cong M$ by Lemma  \ref{l:M-via-K}.
\end{remark}

\section{Compatibility with restriction and corestriction}
\label{ss:com-res}

Let $T^{-1}\to T^0$ and $M$ be as in the beginning of this chapter, and
let $F'/F$ be a finite separable extension, not necessarily Galois.
We will discuss how the isomorphisms of Theorems \ref{thm:tnloc} and \ref{thm:tnglob} translate the restriction map
\[ H^i(F,T^{-1}(F^s) \to T^0(F^s)) \to H^i(F',T^{-1}(F^s) \to T^0(F^s)) \]
for $i=1,2$.

Write $\G'=\tx{Gal}(F^s/F') \subset \G=\tx{Gal}(F^s/F)$.
For a $\G$-module $A$, recall the map $N_{\G'\lmod\G} \colon A_\G \to A_{\G'}$ from \S\ref{sub:explicit0-1}.

\begin{proposition}
Consider a non-archimedean local field $F$. The following diagrams commute:
\[ \xymatrix{
	M_\Gt\ar[r]\ar[d]^{N_{\G' \lmod \G}}&H^1(F,T^{-1} \to T^0)\ar[d]^{\tx{Res}}\\
	M_{\G'\hm,\Tors}\ar[r]&H^1(F',T^{-1} \to T^0)
}
\qquad\quad
\xymatrix{(M \otimes\Q/\Z)_\G\ar[r]\ar[d]^{N_{\G' \lmod \G}}&H^2(F,T^{-1} \to T^0)\ar[d]^{\tx{Res}}\\
(M \otimes \Q/\Z)_{\G'}\ar[r]&H^2(F',T^{-1} \to T^0)}	
\]
and
\[
	\xymatrix{
		M_\Gt\ar[r]&H^1(F,T^{-1} \to T^0)\\
		M_{\G'\hm,\Tors}\ar[r]\ar[u]&H^1(F',T^{-1} \to T^0)\ar[u]_{\tx{Cor}}
	}
		\qquad\quad	
\xymatrix{(M \otimes\Q/\Z)_\G\ar[r]&H^2(F,T^{-1} \to T^0)\\
	(M \otimes \Q/\Z)_{\G'}\ar[r]\ar[u]&H^2(F',T^{-1} \to T^0)\ar[u]_{\tx{Cor}}}	
\]
where the top horizontal arrows in each diagram are the isomorphisms of Theorem \ref{thm:tnloc},
and the left-hand vertical arrow in each of the two bottom diagrams is the natural projection map.
\end{proposition}

\begin{proof}
Since inflation commutes with restriction and corestriction,
it is enough to show that for each Galois extension $K/F$ containing $F'$ and splitting $T^{-1}$ and $T^0$,
the similar diagrams commute that are obtained by replacing $H^i(F,\,\cdot\, )$ with $H^i(K/F,\,\cdot\, )$ on the right.
We then use Lemma \ref{lem:tnrescor}(1) to reduce the claim to the commutativity
of the diagrams in Lemmas \ref{lem:res-1} and \ref{lem:res0}.
\end{proof}

\begin{proposition}
\label{p:Res-gl}
Consider a global field $F$. Let $E/F$ be a finite Galois extension and $A$ be a $\G\sub{E/F}$-module.
\begin{enumerate}
\item[{\rm (1)}] The map $(A[\Vbar]_0)_\G \to (A[\Vbar]_0)_{\G'}$ induced by
	\[ N_{\G' \lmod \G} \colon (A[\V_E]_0)_{\G\sub{E/F}} \to (A[\V_E]_0)_{\G\sub{E/F'}} \]
	in turn induces maps $\mc{F}^1(A/F) \to \mc{F}^1(A/F')$ and $\mc{F}^2(A/F) \to \mc{F}^2(A/F')$.
\item[{\rm (2)}] The map $(A[\Vbar]_0)_{\G'} \to (A[\Vbar]_0)_{\G}$ induced by the natural projection
	\[ \tx{proj} \colon (A[\V_E]_0)_{\G\sub{E/F'}} \to (A[\V_E]_0)_{\G\sub{E/F}} \]
	in turn induces maps $\mc{F}^1(A/F') \to \mc{F}^1(A/F)$ and $\mc{F}^2(A/F') \to \mc{F}^2(A/F)$.

\item[{\rm (3)}]
For $T^{-1}\to T^0$ and $M$ be as in the beginning of this chapter,
the following diagrams commute:
\[ \xymatrix{
	\mc{F}^1(M/F)\ar[r]\ar[d]^{N_{\G' \lmod \G}}&H^1(F,T^{-1} \to T^0)\ar[d]_{\tx{Res}}\\
	\mc{F}^1(M/F')\ar[r]&H^1(F',T^{-1} \to T^0)
} \qquad
\xymatrix{\mc{F}^2(M/F)\ar[r]\ar[d]^{N_{\G' \lmod \G}}&H^2(F,T^{-1} \to T^0)\ar[d]_{\tx{Res}}\\
	\mc{F}^2(M/F')\ar[r]&H^2(F',T^{-1} \to T^0)}
\]
and
\[ \xymatrix{
	\mc{F}^1(M/F)\ar[r]&H^1(F,T^{-1} \to T^0)\\
	\mc{F}^1(M/F')\ar[r]\ar[u]_{\tx{proj}}&H^1(F',T^{-1} \to T^0)\ar[u]_{\tx{Cor}}
} \qquad
\xymatrix{\mc{F}^2(M/F)\ar[r]&H^2(F,T^{-1} \to T^0)\\
	\mc{F}^2(M/F')\ar[r]\ar[u]_{\tx{proj}}&H^2(F',T^{-1} \to T^0)\ar[u]_{\tx{Cor}},
}
\]
where the top horizontal arrows in each diagram are the isomorphisms of Theorem \ref{thm:tnglob}.
\end{enumerate}
\end{proposition}

\begin{proof}
(1) For a tower of finite Galois extensions $K/E/F$
with $F' \subset E$, the following diagram commutes:
\[ \xymatrix@C=13mm{
(A[\V_K]_0)_{\G\sub{K/F}}\ar[d]_-{\beta}\ar[r]^-{N_{\G' \lmod \G}}&(A[\V_K]_0)_{\G\sub{K/F'}}\ar[d]^-{\beta}\\
(A[\V_E]_0)_{\G\sub{E/F}}\ar[r]^-{N_{\G' \lmod \G}}&(A[\V_E]_0)_{\G\sub{E/F'}}
}\]
because $\beta$ is $\G\sub{K/F}$-equivariant. This shows that $N_{\G' \lmod \G}$
induces a map $(A[\Vbar]_0)_\G \to (A[\Vbar]_0)_{\G'}$.

That this map induces a map $\mc{F}^1(A/F) \to \mc{F}^1(A/F')$
follows from the commutativity of the following diagram:
\[ \xymatrix@C=11mm{
(A[\V_E]_0)_{\G\sub{E/F},{\Tors}}\ar[d]^{N_{\G' \lmod \G}}\ar[r]^{(l_{\vbar})_v}
    &\prod\limits_{v|\infty}A_{\G\sub{E/F,\vbar },{\Tors}}\ar[d]^{(N_{\G'\sss{\vbar '}\lmod\G\sss{\vbar}})_{v'}}
    &\ar[l]\prod\limits_{v|\infty}H^{-1}(\G\sss{\vbar},A)\ar[d]^{\tx{Res}}\\
(A[\V_E]_0)_{\G\sub{E/F'},{\Tors}}\ar[r]^{(l_{\vbar '})_{v'}}
    &\prod\limits_{v'|\infty}A_{\G\sub{E/F',\vbar '},{\Tors}}
    &\ar[l]\prod\limits_{v'|\infty}H^{-1}(\G'\sss{\vbar '},A)
} \]
In this diagram, the top products run over the infinite places of $F$,
and the bottom products run over the infinite places of $F'$.
Given infinite places $v$ of $F$ and $v'$ of $F'$ with $v'|v$,
the map $N_{\G'\sss{\vbar '} \lmod \G\sss{\vbar}}\, \colon A_{\G\sub{E/F,\vbar },{\Tors}} \to A_{\G\sub{E/F',\vbar '},{\Tors}}$ is obtained as the composition of the isomorphism $A_{\G\sub{E/F,\vbar },{\Tors}} \to A_{\G\sub{E/F,\vbar '},{\Tors}}$ effected by any
 $\sigma \in \G\sub{E/F}$ with $\sigma \vbar  = \vbar '$ and the map
$N_{\G'\sss{\vbar '} \lmod \G\sss{\vbar '}} \,\colon A_{\G\sub{E/F,\vbar '},{\Tors}} \to A_{\G\sub{E/F',\vbar '},{\Tors}}$.

The left-hand rectangle commutes because the top $l_{\vbar}$ is the composition of
$N_{\G\sub{E/F,\vbar } \lmod \G\sub{E/F}}$ and the evaluation at $\vbar $,
while the bottom $l_{\vbar '}$ has the same formula with $F$ replaced by $F'$, and one can use Remark \ref{rem:ncomp}.
The right-hand rectangle commutes by Lemma \ref{lem:res-1}.

To obtain from $N_{\G' \lmod \G}$ a map $\mc{F}^2(A/F) \to \mc{F}^2(A/F')$, we need the commutativity of
\[ \xymatrix@C=11mm{
(A[\V_E]_0 \otimes \Q/\Z)_{\G\sub{E/F}}\ar[d]^{N_{\G' \lmod \G}}\ar[r]^{(l_{\vbar})_v}
&\prod\limits_{v|\infty}(A \otimes \Q/\Z)_{\G\sub{E/F,\vbar}}\ar[d]^{(N_{\G'\sss{\vbar'}\lmod\G\sss{\vbar}})_{v'}}
&\ar[l]\prod\limits_{v|\infty}H^{0}(\G\sss{\vbar},A)\ar[d]^{\tx{Res}}\\
(A[\V_E]_0 \otimes \Q/\Z)_{\G\sub{E/F'}}\ar[r]^{(l_{\vbar '})_{v'}}&\prod\limits_{v'|\infty}(A \otimes \Q/\Z)_{\G\sub{E/F',\vbar '}}&\ar[l]\prod\limits_{v'|\infty}H^0(\G'\sss{\vbar '},A)
} \]
The left-hand rectangle commutes for the same reason as above, and the right-hand rectangle commutes by Lemma \ref{lem:res0}.

(2) We consider the analogs of the two diagrams from (1), which are now
\[ \xymatrix@C=11mm{
(A[\V_E]_0)_{\G\sub{E/F},{\Tors}}\ar[r]^{(l_{\vbar})_v}&\prod\limits_{v|\infty}A_{\G\sub{E/F,\vbar },{\Tors}}&\ar[l]\prod\limits_{v|\infty}H^{-1}(\G\sss{\vbar},A)\\
(A[\V_E]_0)_{\G\sub{E/F'},{\Tors}}\ar[u]_{\tx{proj}}\ar[r]^{(l_{\vbar '})_{v'}}&\prod\limits_{v'|\infty}A_{\G\sub{E/F',\vbar '},{\Tors}}\ar[u]_-\sum&\ar[l]\prod\limits_{v'|\infty}H^{-1}(\G'\sss{\vbar '},A)\ar[u]_-{\tx{Cor}}
} \]
and
\[ \xymatrix@C=11mm{
(A[\V_E]_0 \otimes \Q/\Z)_{\G\sub{E/F}}\ar[r]^{(l_{\vbar})_v}&\prod\limits_{v|\infty}(A \otimes \Q/\Z)_{\G\sub{E/F,\vbar }}&\ar[l]\prod\limits_{v|\infty}H^{0}(\G\sss{\vbar},A)\\
(A[\V_E]_0 \otimes \Q/\Z)_{\G\sub{E/F'}}\ar[u]_-{\tx{proj}}\ar[r]^{(l_{\vbar '})_{v'}}&\prod\limits_{v'|\infty}(A \otimes \Q/\Z)_{\G\sub{E/F',\vbar '}}\ar[u]_-\sum&\ar[l]\prod\limits_{v'|\infty}H^0(\G'\sss{\vbar '},A)\ar[u]_-{\tx{Cor}}
} \]
In these diagrams, the left-hand vertical arrows are the natural projections;
the middle arrows are made up by summing up, for each $v|\infty$,
the compositions of the natural projections $(A \otimes \Q/\Z)_{\G\sub{E/F',\vbar '}} \to (A \otimes \Q/\Z)_{\G\sub{E/F,\vbar '}}$
and the isomorphisms $(A \otimes \Q/\Z)_{\G\sub{E/F,\vbar '}} \to (A \otimes \Q/\Z)_{\G\sub{E/F,\vbar }}$ over  all $v'|v$;
the right-hand arrows are made by summing up, for each $v|\infty$,
the compositions of the corestrictions $H^0(\G'\sss{\vbar '},A) \to H^0(\G\sss{\vbar '},A)$
and the isomorphisms $H^0(\G\sss{\vbar '},A) \to H^0(\G\sss{\vbar},A)$
effected by any $\sigma \in \G$ with $\sigma\vbar'=\vbar$.
The commutativity of the left-hand rectangles is immediate,
and that of the right-hand rectangles follows from Lemmas \ref{lem:res0} and \ref{lem:res-1}.

(3) As in the local case, the proof can be reduced via Lemma \ref{lem:tnrescor}(2) to  Lemmas \ref{lem:res-1} and \ref{lem:res0}.
\end{proof}

\begin{remark}
For a finite separable field extension $F'/F$ and a complex of $F$-tori $T^\bullet=(T^{-1}\to T^0)$,
for $i>0$ consider the homomorphisms
\begin{align*}
\tx{Res}\colon &H^i(F,T^\bullet )\to H^i(F',T^\bullet)\quad\ \text{and}
\quad\ \tx{Cor}\colon H^i(F',T^\bullet)\to H^i(F,T^\bullet).
\end{align*}
Then $\tx{Cor}\circ\tx{Res}=[F':F]$.

Indeed,  it holds for $H^i(E/F,T^\bullet)$ and $H^i(E/F',T^\bullet)$
for all finite Galois extensions $E/F$ containing $F'$ and for all $i\in\Z$; see  \S\ref{ss:Res-Cor}.
Now both restriction and corestriction commute with inflation in degree $i>0$,
and so does multiplication by $[F':F]$, from which the result follows by passing to the colimit.
\end{remark}

\section{Compatibility with localization}

Let $F$ be a global field, and let $\vbar  \in\! \Vbar=V(F^s)$.
Recall the map $l_{\vbar} \colon (A[\Vbar]_0)_\G \to A_{\G\sss{\vbar}}$ of \eqref{eq:loc}. For a finite Galois extension $E/F$ that splits $T^{-1}$ and $T^0$ let $w$ be the place of $E$ under $\vbar $. Recall the maps $l_w^i \colon H^i(E/F,M[\V_E]_0) \to H^i(E_w/F_v,M)$ defined as the composition of the restriction along $\G\sub{E/F,w} \to \G\sub{E/F}$ and the evaluation at $w$.

\begin{lemma} \label{lem:loc1}
Let $A$ be a $\Z[\G\sub{E/F}]$-module. The map $l_{\vbar} \colon (A[\Vbar]_0)_\G \to A_{\G\sss{\vbar}}$ induces maps $\mc{F}^1(A) \to A_{\G\sss{\vbar},{\Tors}}$ and $\mc{F}^2(A) \to (A \otimes \Q/\Z)_{\G\sss{\vbar}}$. These maps fit into the commutative diagrams
\[ \xymatrix@C=5mm{
	H^{-1}(E/F,A[\V_E]_0)\ar[r]\ar[d]_-{l^{-1}_w}&\mc{F}^1(A)\ar[d]^{l_{\vbar}}\\
	H^{-1}(E_w/F_v,A)\ar[r]&A_{\G\sub{E/F,w},{\Tors}}
} \quad
\xymatrix@C=5mm{
	H^0(E/F,A[\V_E]_0)\ar[r]\ar[d]_-{l^0_w}&\mc{F}^2(A)\ar[d]^{l_{\vbar}}\\
	H^0(E_w/F_v,A)\ar[r]&(A \otimes \Q/\Z)_{\G\sub{E/F,w}}
} \]
\end{lemma}

\begin{proof}
We recall that $l^i_w$ is defined as the composition of the restriction map in degree $i$
along the inclusion $\G\sub{E/F,w} \to \G\sub{E/F}$ and the evaluation at $w$ map,
while $l_{\vbar}$ is defined on the level $E/F$
as the composition of the map $N_{\G\sub{E/F,w} \lmod \G\sub{E/F}}$
and the evaluation at $w$ map. The commutativity of the  diagram at left
now follows from Lemma \ref{lem:res-1}, and that of the one at right from Lemma \ref{lem:res0}.
\end{proof}

\begin{proposition}
\label{p:loc2}
For $T^{-1}\to T^0$ and $M$ be as in the beginning of this chapter and
for any place $v$ of $F$, we have commutative diagrams
\[ \xymatrix{
H^1(F,T^{-1} \to T^0)\ar[r]^-\sim \ar[d]_-{\tx{loc}_v}	&\mc{F}^1(M)\ar[d]^-{l_{\vbar}}   \\
H^1(F_v,T^{-1}\to T^0)\ar[r] 	                        &M_{\G\sss{\vbar},{\Tors}}
}\qquad \xymatrix{
H^2(F,T^{-1} \to T^0)\ar[r]^-\sim \ar[d]_-{\tx{loc}_v}	&\mc{F}^2(M) \ar[d]^-{l_{\vbar}}  \\
H^2(F_v,T^{-1}\to T^0)\ar[r]                       &(M \otimes \Q/\Z)_{\G\sss{\vbar}}
} \]
\end{proposition}

\begin{proof}
By Theorem \ref{thm:tnglob} we have commutative diagrams
\[ \xymatrix{
H^1(F,\Tbul)\ar[r]^-\sim \ar[d]_-{\tx{loc}_\infty}	&\mc{F}^1(M)\ar[d]^-{l_\infty}   \\
\prod\limits_{v|\infty}H^1(F_v,\Tbul)\ar[r] 	                        &\prod\limits_{v|\infty}M_{\G\sss{\vbar},{\Tors}}
}\qquad \xymatrix{
H^2(F,\Tbul)\ar[r]^-\sim \ar[d]_-{\tx{loc}_\infty}	&\mc{F}^2(M) \ar[d]^-{l_\infty}  \\
\prod\limits_{v|\infty}H^2(F_v,\Tbul)\ar[r]                       &\prod\limits_{v|\infty}(M \otimes \Q/\Z)_{\G\sss{\vbar}}
} \]
where $\Tbul=(T^{-1}\to T^0)$. From these diagrams we obtain the desired diagrams for $v$ infinite.

When $v$ is finite, it suffices to prove the commutativity of the following diagrams:
\[ \xymatrix{
	\mc{F}^1(M)\ar[r]^-\sim\ar[d]_-{l_{\vbar}}   &H^1(F,\Tbul)\ar[d]^-{\tx{loc}_v}\\
	M_{\Gamma_{\vbar},{\Tors}}\ar[r]^-\sim      &H^1(F_v,\Tbul)
}\qquad \xymatrix{
	\mc{F}^2(M)\ar[r]^-\sim\ar[d]_-{l_{\vbar}}       &H^2(F,\Tbul)\ar[d]^-{\tx{loc}_v}\\
	(M \otimes \Q/\Z)_{\Gamma_{\vbar}}\ar[r]^-\sim  &H^2(F_v,\Tbul)
} \]
We want to prove that in each diagram, the composition of the top and right-hand arrows
equals that of the left-hand and bottom arrows.
According to Lemmas \ref{lem:h-1glob} and \ref{lem:h-1loc}
as well as Lemma \ref{lem:h0glob} and Corollary \ref{cor:h0loc},
it is enough to prove this equality after composition
with the map $H^{i-2}(E/F,M[\V_E]_0) \to \mc{F}^i(M)$ for $i=1,2$.
Lemma \ref{lem:loc1} then reduces the proof to Proposition \ref{pro:tnfinloc}.
\end{proof}

\section{Possible injectivity and surjectivity in Proposition \ref{p:loc2}}
\label{ss:inj-surj}
When $v$ is finite, the bottom arrows in the diagrams of Proposition \ref{p:loc2}
are bijective. We discuss the possible injectivity and surjectivity when $v$ is real.

Let $F=\R$ and let $\Tbul=(T^{-1}\to T^0)$ be a complex of $\R$-tori with finite kernel.
We write $\G=\G(\C/\R)$ and consider the $\G$-module $M\coloneqq X_*(T^{-1})\to X_*(T^0)$.
Consider the composite homomorphisms
\begin{align*}
&H^1(\R,\Tbul)\,\longisot\, H^{-1}(\G,M)\labelto{\varphi^{-1}} M_\Gt\hs,\\
&H^2(\R,\Tbul)\,\longisot\, H^0(\G,M)\labelto{\varphi^0} M_\G\otimes\Q/\Z
\end{align*}
where the homomorphisms at left are bijective by Proposition  \ref{p:R}.
We discuss the possible injectivity and surjectivity of the homomorphisms $\varphi^{-1}$ and $\varphi^0$.

\begin{proposition}\label{p:R-Tbul}
With the above notation:
\begin{enumerate}
\item $\varphi^{-1}$ is injective, but not necessarily surjective;
\item $\varphi^0$ might be neither injective not surjective.
\item Assume that $M$ is torsion-free. Then $\varphi^0$ is injective, but not necessarily surjective.
\end{enumerate}
\end{proposition}

\begin{proof}
(1) The homomorphism $\varphi^{-1}$ is injective by Fact \ref{fct:h-1}.
If $M=\Z/3\Z$  (with trivial $\G$-action), then $H^{-1}(\G,M)=0$, while $M_\Gt\cong\Z/3\Z$, and $\varphi^{-1}$ is not surjective.

(2) If $M=\Z$, then $H^0(\G,M)\cong\Z/2\Z$, while $M_\G\otimes \Q/\Z\cong\Q/\Z$, and $\varphi^0$ is not surjective.
Moreover, if $M=\Z/2\Z$, then $H^0(\G,M)\cong\Z/2\Z$, while $M_\G\otimes \Q/\Z=0$, and $\varphi^0$ is not injective.
Furthermore, if we take $M=\Z\oplus\Z/2\Z$, then $\varphi^0$ is neither injective nor surjective.

(3)The classification of torsion-free $\G$-modules (see, for instance, \cite[Appendix A]{BorTim24}\hs)
immediately reduces the injectivity assertion to the case $M=\Z$ with trivial $\G$-action.
In this case  $H^0(\G,M)\cong\Z/2\Z$, $M_\G\otimes\Q/\Z\cong\Q/\Z$,
and $\varphi^0$ sends $1+2\Z\in \Z/2\Z=H^0(\G,M)$ to $\frac12+\Z\in\Q/\Z=M_\G\otimes\Q/\Z$;
see Facts \ref{fct:h0} and \ref{fct:xig}.
Thus $\varphi^0$ is injective, but not surjective.
\end{proof}

\section{Compatibility with connecting homomorphisms}

Consider an exact sequence
\[ 0 \to [M^{-1}_1 \to M^0_1] \to [M_2^{-1} \to M_2^0] \to [M^{-1}_3 \to M^0_3] \to 0 \]
where for $i=1,\, 2,\, 3$,  $M_i^{-1}$ and $M_i^0$ are torsion free $\G(F)$-modules,
and the homomorphisms $M_i^{-1}\to M_i^0$ are injective.
On the one hand, it leads to the exact sequence of complexes of tori
\begin{equation*}
1 \to [T^{-1}_1 \to T^0_1] \to [T_2^{-1} \to T_2^0] \to [T^{-1}_3 \to T^0_3] \to 1,	
\end{equation*}
which in turn leads to a long exact sequence in Tate cohomology. On the other hand, it also leads to the exact sequence
\[ 0 \to M_1 \to M_2 \to M_3 \to 0, \]
where as before $M_i=M_i^0/M_i^{-1}$.

Let $E/F$ be a finite Galois extension such that the action of $\G$ on each $M_i$ factors through $\G\sub{E/F}$.
Set $A_i=M_i$ if $F$ is local, and $A_i=M_i[\V_E]_0$ if $F$ is global.
By Theorem  \ref{t:Hinich} in Appendix \ref{app:Hinich},  we have an exact sequence
\[
\xymatrix{
(A_1)_{\G\sub{E/F},{\Tors}}\ar[r]&(A_2)_{\G\sub{E/F},{\Tors}}\ar[r]&(A_3)_{\G\sub{E/F},{\Tors}}\ar  `d[l]`[lld]^\delta[lld]\\
(A_1 \otimes \Q/\Z)_{\G\sub{E/F}}\ar[r]&(A_2 \otimes \Q/\Z)_{\G\sub{E/F}}\ar[r]&(A_3 \otimes \Q/\Z)_{\G\sub{E/F}},
}\]
where the horizontal arrows are induced by the maps between the $A_i$, and the map $\delta$ was defined in \eqref{eq:connect-delta}.

\begin{proposition}\label{p:F-connecting-local}
		Assume that $F$ is non-archimedean local. We have a commutative diagram
		\[ \xymatrix{
			(M_3)_{\G,{\Tors}}\ar[r]^\delta\ar[d]_-\cong&(M_1\otimes\Q/\Z)_{\G}\ar[d]^-\cong \\
			H^1(F,T_3^{-1} \to T_3^0)\ar[r]&H^2(F,T_1^{-1} \to T_1^0)
		}\]
		in which the bottom horizontal arrow is the connecting homomorphism,
        and the vertical arrows are the isomorphisms of Theorem \ref{thm:tnloc}.
\end{proposition}

\begin{proof}
According to Lemma \ref{lem:h-1loc}, it is enough to prove the commutativity
of this diagram after composing with the map $H^{-1}(E/F,M_3) \to (M_3)_{\G,{\Tors}}$
where $E/F$ is a large enough finite Galois extension.
Consider the diagram
\[ \xymatrix{
	H^{-1}(E/F,A_3)\ar[r]\ar[d]&H^0(E/F,A_1)\ar[d]\\
	H^1(E/F,T_3^{-1} \to T_3^0)\ar[r]&H^2(E/F,T_1^{-1} \to T_1^0)
}\]
Since cup products respect connecting homomorphisms, and the vertical arrows (isomorphisms)
are given by cup product with the fundamental class
$$\alpha_{E/F} \in H^2(\G\sub{E/F},E^\times),$$
this diagram commutes, where the top and bottom horizontal arrows
are the connecting homomorphisms in the corresponding exact sequences of $\G\sub{E/F}$-modules.
The proposition now follows from Lemma \ref{lem:connect-delta}.
\end{proof}

Let $F$ be global.
Note that $\delta\colon (M_3[\V_E]_0)_{\G\sub{E/F},{\Tors}} \to (M_1[\V_E]_0 \otimes \Q/\Z)_{\G\sub{E/F}}$
is compatible with transition from $E/F$ to $K/F$, that is, with the transition map $\beta$,
and hence induces a map $\delta\colon (M_3[\Vbar]_0)_{\G,{\Tors}} \to (M_1[\Vbar]_0 \otimes \Q/\Z)_{\G}$.
Then we check that the following diagram commutes:
\[ \xymatrix@C=16mm{
	\prod\limits_{v|\infty}H^{-1}(\G\sss{\vbar},M_3)\ar[r]\ar[d]_-{\delta_\infty}
    &\prod_{v|\infty} (M_3)_{\G\sss{\vbar},{\Tors}}\ar[d]^{(\delta)_{\vbar}}
    &\ar[l]_-{(l_{\vbar})_{\vbar}}(M_3)_{\G,{\Tors}}\ar[d]^\delta\\
\prod\limits_{v|\infty}H^0(\G\sss{\vbar},M_1)\ar[r]^-{(\xi_{F^s_{\vbar}/F_v})_{\vbar}}
     &\prod\limits_{v|\infty} (M_1 \otimes \Q/\Z)_{\G\sss{\vbar}}
     &\ar[l]_-{(l_{\vbar})_{\vbar}}(M_1 \otimes \Q/\Z)_{\G}
}\]
The commutativity of the left-hand rectangle was checked in the proof of Proposition \ref{p:F-connecting-local}.
The commutativity of the right-hand rectangle is immediate.
Now we can define  a map $\delta_\cF$ from the fiber product of the top row to the fiber product of the bottom row.

\begin{proposition}\label{p:F-connecting-global}	
Assume that $F$ is global.
We have a commutative diagram
\begin{equation}\label{e:delta-1-F}
\begin{aligned}
        \xymatrix{
			\mc{F}^1(M_3)\ar[r]\ar[d]_-\cong      &\mc{F}^2(M_1)\ar[d]^-\cong\\
			H^1(F,T_3^{-1} \to T_3^0)\ar[r]&H^2(F,T_1^{-1} \to T_1^0)
		}
\end{aligned}
\end{equation}
in which the vertical arrows are the isomorphisms of Theorem \ref{thm:tnglob} and
the bottom horizontal arrow is the connecting homomorphism.
Moreover, the top horizontal arrow is
the fiber product of the connecting homomorphism
\[ \delta \colon (M_3[\Vbar]_0)_{\Gt} \to (M_1[\Vbar]_0 \otimes \Q/\Z)_{\G}\]
of \eqref{eq:connect-delta} and the connecting homomorphism
\[\delta_\infty\colon \prod_{v|\infty} H^{-1}(\G\sub \vbar\hs,M_3)\to \prod_{v|\infty} H^{0}(\G\sub \vbar\hs,M_1)\]
over
\[\delta_\infty^M\colon\prod_{v|\infty} (M_3)_{\G(\vbar),\Tors}\to\prod_{v|\infty} (M_1\otimes \Q/\Z)_{\G(\vbar)}\]
where $\delta^M_\infty$ is the connecting homomorphism $\delta_0$
in the homology  exact sequence \ref{e:LG} in Theorem \ref{t:Hinich}.
\end{proposition}

\begin{proof}
The proof is similar to that of Proposition \ref{p:F-connecting-local}.
We use Lemma \ref{lem:h-1glob} instead of Lemma  \ref{lem:h-1loc}.
\end{proof}

\chapter{Galois cohomology of reductive groups}
\label{sec:red}

In this chapter we will study the Galois cohomology
of a connected reductive group $G$ defined over a local or global field $F$.
After reviewing the abelian cohomology groups $H^n_\tx{ab}(F,G)$
and their relation to the usual Galois cohomology $H^1(F,G)$,
we will use the results of Chapter \ref{s:ab-coho} to compute $H^1(F,G)$
in terms of the algebraic fundamental group $\pi_1(G)$
and the real Galois cohomology sets $H^1(F_v,G)$.
We will also compute $H^2(F,T)$ for an $F$-torus $T$ in terms of $\pi_1(T)=X_*(T)$.

{\em Notation.}
For a reductive group over an arbitrary field $F$, we denote by $G_\ssc$
the simply connected cover of the commutator subgroup $[G,G]$ of $G$;
see \cite[Proposition (2.24)(ii)]{BoT72} or \cite[Corollary A.4.11]{CGP15}.
We denote $\G=\tx{Gal}(F^s/F)$.

\section{$t$-Extensions}

\begin{definition}\label{d:toric}
Let $G$ be a reductive group over an arbitrary field $F$.

\begin{enumerate}
\item[\rm (1)] \cite[Definition 2.1]{BGA14} \
A {\em $t$-extension} (or $t$-resolution) of a reductive group $G$
is a surjection $G'\to G$ such that  $G'$ is a reductive $F$-group
with simply connected derived group $[G',G']$
and that $\ker[G'\to G]$ is an $F$-torus.

\item[\rm (2)] A $t$-extension of  a homomorphism of reductive $F$-groups
$\varphi\colon G\to H$  is a commutative diagram
\[
\xymatrix{
G'\ar[r]^-{\varphi'}\ar[d] &H'\ar[d]\\
G\ar[r] \ar[r]^-\varphi & H
}
\]
where $G'\to G$ and $H'\to H$ are $t$-extensions of $G$ and $H$, respectively.
\end{enumerate}
\end{definition}

\begin{lemma}\label{l:toric}
\begin{enumerate}
\item[\rm (1)] Any reductive $F$-group $G$ admits a $t$-extension.
\item[\rm (2)] Any homomorphism $\varphi\colon G\to H$ of reductive $F$-groups admits a $t$-extension.
\end{enumerate}
\end{lemma}

This lemma is well-known; it goes back to Langlands \cite[pp. 228-229]{LanCor}.
Note that the $z$-extensions of Kottwitz \cite{Kot82} and the flasque resolutions of Colliot-Th\'el\`ene \cite{CT08}
are special kinds of $t$-extensions.

\begin{proof}
For (1) see \cite[Proposition 2.2]{BGA14}. For (2) see \cite[Proposition 2.8]{BGA14}.
\end{proof}

\section{The fundamental group and abelian cohomology}
\label{ss:fund-gp}

For a reductive group over an arbitrary field $F$,
following Deligne \cite[\S2.0.11]{Deligne79} we consider the composite homomorphism
\[\rho\colon G_\ssc\onto[G,G]\into G,\]
which in general is neither injective nor surjective.
For a maximal torus  $T\subseteq G$, we denote
$T_\ssc=\rho^{-1}(T)\subseteq G_\ssc$\hs,
which is a maximal torus of $G_\ssc$.
The induced homomorphism on cocharacters $\rho_*\colon X_*(T_\ssc)\to X_*(T)$ is injective.
The {\em algebraic fundamental group} of $G$ is defined by
\[\pi_1(G)=X_*(T)/\rho_*X_*(T_\ssc).\]
The Galois group $\G=\tx{Gal}(F^s/F)$ naturally acts on $\pi_1(G)$,
and the  $\tx{Gal}(F^s/F)$-module $\pi_1(G)$
is well defined (does not depend on the choice of $T$
up to a transitive system of isomorphisms);
see \cite[Lemma 1.2]{Brv98} and \cite[Proposition 2.2.1]{Fu25}.
Consider  the complex of $\G$-modules $X_*(T_\ssc)\to X_*(T)$ in degrees $-1$ and $0$.
Its class in the derived category depends only on $\pi_1(G)$, and hence does not depend on the choice of $T$.
This also holds for the complex
\[ \big(X_*(T_\ssc)\otimes F^{s\hs\times} \to X_*(T)\otimes F^{s\hs\times}\big)\, =\,\big(T_\ssc(F^s)\to T(F^s)\big)\]
and for its hypercohomology groups
\[ H^n_\ab(F,G)\coloneqq H^n(F, T_\ssc\to T)\coloneqq H^n\big(\G,T_\ssc(F^s)\to T(F^s)\big)\]
for $n\ge 0$.
Thus the  abelian groups $H^n_\ab(F,G)$ are well-defined  (do not depend on the choice of $T$).
The results of Chapter \ref{s:ab-coho} compute $H^n_\ab(F,G)$ over a local or global field $F$
for $n\ge 1$ in terms of the $\G$-module  $M=\pi_1(G)$.

Let
\begin{equation}\label{e:G'-G-G''}
1 \to G' \labelt \iota G \labelt \vk G'' \to 1
\end{equation}
be a short exact sequence of connected reductive $F$-groups.
We choose a maximal torus $T\subseteq G$ and consider the maximal tori \,$T'=\iota^{-1}(T)\subseteq G'$,  \  $T''=\vk(T)\subseteq G''$.
\,We define the maximal tori \,$T_\ssc\subseteq G_\ssc,\ \, T'_\ssc\subseteq G'_\ssc,\ \, T''_\ssc\subseteq G''_\ssc$
\,to be the preimages of $T,\,T',\,T''$, respectively.
Then we obtain commutative diagrams with exact rows
\begin{equation}\label{e:tori}
\begin{aligned}
\xymatrix@R=6mm{
1\ar[r] &T'_\ssc\ar[r]\ar[d] &T_\ssc\ar[r]\ar[d] &T''_\ssc\ar[r]\ar[d] &1\\
1\ar[r] &T'\ar[r]            &T\ar[r]            &T''\ar[r]            &1
}
\end{aligned}
\end{equation}
\begin{equation}\label{e:cochar}
\begin{aligned}
\xymatrix@R=6mm{
0\ar[r] &X_*(T'_\ssc)\ar[r]\ar[d] &X_*(T_\ssc)\ar[r]\ar[d] &X_*(T''_\ssc)\ar[r]\ar[d] &0\\
0\ar[r] &X_*(T')\ar[r]            &X_*(T)\ar[r]            &X_*(T'')\ar[r]            &0
}
\end{aligned}
\end{equation}

\begin{lemma}[\hs{\cite[Lemma 1.5]{Brv98}, \cite[Proposition 6.8]{CT08}}\hs]
\label{l:G'-G-G''-pi1}
The following sequence is exact:
\begin{equation*}
0\to\pi_1(G')\labelt{\,\iota_*} \pi_1(G)\labelt{\,\vk_*} \pi_1(G'')\to 0
\end{equation*}
where $\pi_1(G)=\coker \![X_*(T_\ssc)\to X_*(T)]$ and similarly for $\pi_1(G')$ and  $\pi_1(G'')$.
\end{lemma}

\begin{proof}
In diagram \eqref{e:cochar} the vertical arrows are injective, and the lemma follows from the snake lemma.
\end{proof}

\begin{lemma}\label{l:ab-exact-long}
The short exact sequence \eqref{e:G'-G-G''} gives rise to
a long exact sequence of abelian cohomology for $i\ge -1$
\begin{equation*}
\dots\to H^i_\ab(F,G')\labelt{\iota_*} H^i_\ab(F,G) \labelt{\vk_*} H^i_\ab(F,G'')
   \labelt{\delta} H^{i+1}_\ab(F,G')\labelt{\iota_*}\dots
\end{equation*}
\end{lemma}

\begin{proof}
The diagram \eqref{e:tori} can be regarded as a short exact sequence of complexes of tori
\[1\to (T'_\ssc\to T') \labelto{\iota} (T_\ssc\to T)\labelto{\vk} (T''_\ssc\to T'')\to 1\]
and so it gives rise to the desired long exact sequence in hypercohomology.
\end{proof}

\section{A stable crossed module}
\label{ss:stable-crossed}

Let $G$ be a reductive group over an arbitrary field $F$.
The group $G$ acts on itself by conjugation, and by functoriality $G$ acts on $G_\ssc$.
We obtain an action
\[\theta\colon G\times G_\ssc\to G_\ssc,\qquad (g,s)\mapsto {}^g\!
    s\quad\ \text{for} \ g\in G,\ s\in G_\ssc\hs,\]
where by abuse of notation we write $g\in G$ for $g\in G(\ov F)$.
Choose a $t$-extension $G'\to G$ and a lift $g'\in G'$ of $g\in G$;
then $G_\ssc$ embeds into $G'$ and we have
\[ {}^g\!s=g's\hs g^{\prime\,-1}.\]
The three actions: \,$G_\ssc$ on $G_\ssc$\hs, \,$G$ on $G_\ssc$\hs, \,and \,$G$ on $G$, \,are compatible:
\begin{align*}
^{\rho(s)}s'=s s' s^{-1},\quad\
\rho(\hs^{g}\! s')&=g \rho(s') g^{-1}\quad\text{for}\ \,g\in G,\ s,s'\in G_\ssc\hs.
\end{align*}
In other words, $(G_\ssc\to G,\, \theta)$ is a {\em crossed module} of algebraic groups,
and therefore
\[ (G_\ssc(F^s)\to G(F^s),\,\theta)\]
is a $\G$-equivariant crossed module where $\G=\tx{Gal}(F^s\hm/F)$;
see Appendix \ref{app:crossed}.

Deligne \cite[\S2.0.2]{Deligne79} observed that the commutator map
\[ [\ ,\,]\colon G\times G\to G,\qquad
  g_1,g_2\mapsto [g_1,g_2]\coloneqq g_1 g_2 g_1^{-1} g_2^{-1}\]
lifts to a morphism of $F$-varieties
\[ \{\,,\} \colon G\times G\to G_\ssc\qquad\text{such that}\quad\ \rho\big(\{g_1,g_2\}\big)=[g_1,g_2],\]
as follows.
The commutator map
\[G_\ssc\times G_\ssc\to G_\ssc,\quad\ s_1,s_2\mapsto [s_1,s_2]\coloneqq s_1 s_2 s_1^{-1} s_2^{-1}\]
clearly factors via  a morphism of $F$-varieties
\[(G_\ssc)_\ad\times (G_\ssc)_\ad\to G_\ssc\]
where $(G_\ssc)_\ad=G_\ssc/Z_{G_\ssc}$ and $Z_{G_\ssc}$ denotes the center of $G_\ssc$.
Identifying $(G_\ssc)_\ad$ with $G_\ad\coloneqq G/Z_G$\hs, we obtain the desired morphism of $F$-varieties
\[\{\,,\}\colon G\times G\to G_\ad\times G_\ad\to G_\ssc.\]
If we choose a $t$-extension $G'\to G$ and choose lifts $g_1',g_2'\in G'$ of $g_1,g_2\in G$, respectively,
then $G_\ssc$ embeds into $G'$ and we have
\[\{g_1,g_2\}=[g_1',g_2']. \]
The map $\{\,,\}$ has the following properties first formulated by Conduch\'e \cite[(3.11)]{Conduche}:
\begin{align*}
&\rho\big(\{g_1,g_2\}\big)=[g_1,g_2];\\
& \{\rho(s_1),\rho(s_2)\}=[s_1,s_2];\\
&\{g_1,g_2\}=\{g_2,g_1\}^{-1};\\
&\{\rho(s),g\}= s\,\hs^g\!s^{-1};\\
&\{g_1g_2,\hs g_3\}=\{g_1 g_2 g_1^{-1},\,g_1 g_3 g_1^{-1}\}\hs\{g_1,g_3\};
\end{align*}
for \,$g,g_1,g_2,g_3\in G,\ s,s_1,s_2\in G_\ssc$.
In other words, $\{\,,\}$ is a {\em symmetric braiding}
of the crossed module $(G_\ssc\to G,\, \theta)$, and the tuple
$\big(G_\ssc\to G,\, \theta,\{\,,\}\big)$ is a {\em symmetrically braided}, or {\em stable} crossed module of algebraic groups
(following Conduch\'e, we  write ``stable'' instead of ``symmetrically braided'').
We regard $\big(G_\ssc\to G,\, \theta,\{\,,\}\big)$ as the {\em abelianization of $G$}, and write
\[  G_\ab=\big(G_\ssc\to G,\, \theta,\{\,,\}\big).\]
This stable crossed module of algebraic groups $G_\ab$ defines
a $\G$-equivariant stable crossed module
\[ \big(G_\ssc(F^s)\to G(F^s),\, \theta,\{\,,\}\big).\]

\begin{lemma}\label{l:braiding}
Let $\varphi\colon G\to H$ be a homomorphism of reductive groups over  $F$,
and let $\varphi_\ssc\colon G_\ssc\to H_\ssc$ denote the induced homomorphism.
Then for each $g,g_1,g_2\in G$ and $s\in G^\ssc$ we have with obvious notation:
\[ {}^{\varphi(g)}\!\big(\varphi_\ssc(s)\big)=\varphi_\ssc(\hs^g\!s),\qquad
      \big\lbrace \varphi(g_1),\varphi(g_2)\big\rbrace _H=
      \varphi_\ssc\big(\lbrace g_1,g_2\rbrace _G\big). \]
\end{lemma}

\begin{proof}
By Lemma \ref{l:toric} there exists an  $t$-extension $\varphi'\colon G'\to H'$ of $\varphi$.
We choose lifts $g', g_1', g_2'\in G'$ of $g,g_1,g_2\in G$, respectively.
Note that $s\in G_\ssc\subseteq G'$.
Then we must check the formulas
\[\vp(g')\vp(s)\vp(g')^{-1}=\vp(g' s\hs g^{\prime\,-1})\quad\  \text{and}\quad\
      \big[\vp(g_1'),\vp(g_2')\big]=\vp\big([g_1',g_2']\big), \]
which follow immediately from the fact that $\vp$ is a homomorphism.
\end{proof}

Lemma \ref{l:braiding} shows that a homomorphism of reductive $F$-groups $\varphi\colon G\to H$
induces a morphism of abelianizations
\begin{equation}\label{e:phi-ab}
\varphi_\ab\colon\,G_\ab\coloneqq\big(G_\ssc\to G,\, \theta_G,\{\,,\}_G\big)
       \,\lra\, \big(H_\ssc\to H,\, \theta_H,\{\,,\}_H\big)\eqqcolon H_\ab.\
\end{equation}

\section{The abelianization map} \label{sub:abcoh}
Let $G$ be a reductive group over an arbitrary field $F$.
In  \cite{Brv98}, the first-named author constructed a functorial {\em abelianization map}
\[\ab\colon H^1(F,G)\to H^1_\ab(F,G)\]
in the case when $F$ is a field of characteristic 0.
Later Gonz\'alez-Avil\'es \cite[Theorem 5.1]{GA12} constructed the abelianization map
over a field $F$ of arbitrary characteristic
(even over an arbitrary base scheme) using flat cohomology.
However, he used the center $Z(G)$ in his construction, and therefore his construction
is functorial only with respect to {\em normal} homomorphisms $\varphi\colon G\to H$,
that is, homomorphisms such that $\im\varphi$ is normal in $H$.
Indeed, a normal homomorphism $\varphi$ (but not an arbitrary homomorphism)
induces a homomorphism of centers $\varphi_Z\colon Z(G)\to Z(H)$.

Below we construct the abelianization map $\ab$ over a field $F$ of arbitrary characteristic
in terms of Galois cohomology rather than flat cohomology.
Our construction is functorial with respect to arbitrary homomorphisms.

Since $(G_\ssc(F^s)\to G(F^s),\, \theta)$ is  a $\G$-equivariant crossed module,
one can define the Galois (hyper)cohomology pointed set
\[ H^1(F,G_\ssc\to G,\,\theta)\coloneqq H^1(\G,\, G_\ssc(F^s)\to G(F^s),\,\theta);\]
see \cite[\S3.3.2]{Brv98}, or Noohi  \cite[\S4]{Noohi11}, or Appendix \ref{app:crossed} below.
Using the symmetric braiding $\{\,,\}$, one can define
a structure of abelian group on this pointed set;
see \cite[Corollaries 4.2 and 4.5]{Noohi11}.
Thus our stable crossed module of algebraic groups $G_\ab$
defines an abelian group
\[H^1(F,G_\ab)\coloneqq   H^1\big(\G,\, G_\ssc(F^s)\to G(F^s),\,\theta, \{\,,\}\big)\]
Using \eqref{e:phi-ab}, we see that a homomorphism of reductive $F$-groups
$\varphi\colon G\to H$ induces a morphism of stable crossed modules
\[\varphi_\ab\colon G_\ab\to H_\ab\]
and a homomorphism of abelian groups
\[\varphi_\ab^1\colon H^1(F,G_\ab) \to H^1(F,H_\ab).\]
Thus
\[G\hs\rightsquigarrow\hs H^1(F,G_\ab)\coloneqq H^1\big(F, G_\ssc\to G,\, \theta, \{\,,\}\big)\]
is a functor from the category of reductive $F$-groups to the category of abelian groups.

Let $T\subseteq G$ be a maximal torus, and set $T_\ssc=\rho^{-1}(T)\subseteq G_\ssc$.
We regard the complex $(T_\ssc\to T)$ as a stable crossed module
$\big(T_\ssc\to T,\, \theta_T, \{\,,\}_T\big)$
with trivial action $\theta_T$ and trivial braiding  $\{\,,\}_T$.
The embedding of stable crossed modules
\[ \big(T_\ssc\to T,\, \theta_T, \{\,,\}_T\big)\,\into\, \big(G_\ssc\to G,\, \theta_G, \{\,,\}_G\big)\]
induces a homomorphism of abelian groups
\[H^1(F, T_\ssc\to T)\to H^1(F, G_\ab).\]
Rosengarten showed in Appendix B to \cite{Borovoi23-CR} that for the quasi-isomorphism
\[ (T_\ssc\to T)\,\into\, (G_\ssc\to G)\]
in the category of $F$-group schemes,
the induced morphism of $\G$-equivariant crossed modules
\[ \big(T_\ssc(F^s)\into T(F^s)\big)\,\into\, \big(G_\ssc(F^s)\to G(F^s)\big)\]
is a quasi-isomorphism in the category of $\G$-groups
(even in positive characteristic when $\ker [T_\ssc\to T]$ is not smooth).
It follows that the induced homomorphism of abelian groups
\[H^1_\ab(F,G)\coloneqq H^1(F, T_\ssc\to T)\,\lra\, H^1\big(F, G_\ssc\to G,\, \theta,\{\,,\}\big)\eqqcolon H^1(F,G_\ab)\]
is an isomorphism; see \cite[Proposition 5.6]{Noohi11}.
Thus we can identify the abelian group $H^1(F,G_\ab)\coloneqq H^1\big(F, G_\ssc\to G,\, \theta,\{\,,\}\big)$
with $H^1_\ab(F,G)\coloneqq H^1(F, T_\ssc\to T)$.

For a reductive $F$-group $G$, consider the morphism of crossed modules
\[ (1\to G)\,\to\, (G_\ssc\to G).\]
It induces a morphism of pointed sets
\begin{align*}
\ab\colon\, H^1(F,G)=&H^1(F,1\to G)\,\to\,H^1(F, G_\ssc\to G)\\
= &H^1\big(F, G_\ssc\to G,\, \theta,\{\,,\}\big)\eqqcolon H^1(F,G_\ab)=H^1_\ab(F,G).
\end{align*}
This is our abelianization map;
see \cite[Appendix]{Borovoi93} for an explicit formula in terms of cocycles.
From the short exact sequence of complexes of reductive $F$-groups
\[ 1\to\, (1\to G)\,\to (G_\ssc\to G)\,\to\, (G_\ssc\to 1)\,\to 1,\]
in which  $(1\to G)$ and $(G_\ssc\to G)$ are crossed modules, but $(G_\ssc\to 1)$ is not,
we obtain an exact sequence of pointed sets
\begin{equation}\label{e:Gsc}
H^1(F,G_\ssc)\labelto{\rho_*}  H^1(F,G)\labelto\ab H^1_\tx{ab}(F,G);
\end{equation}
see \cite[(3.10.1)]{Brv98}.

\section{Computation in terms of $\pi_1(G)$}

Let $G$ be a reductive group over a local or global field $F$.
Theorem \ref{thm:tnloc} and Proposition \ref{p:R} compute $H^n_\ab(F,G)$ for $n\ge 1$
when $F$ is a local field.
Theorem \ref{thm:tnglob} computes $H^n_\ab(F,G)$ for $n\ge 1$ when $F$ is a global field.

\begin{theorem}[\hs{\cite[Corollary 5.4.1 and Theorem 5.11]{Brv98}, \cite[Theorem 5.8(i)]{GA12}}\hs]
\label{t:Brv-GA}
Let $G$ be a reductive group over a local or global field $F$. Then:
\begin{enumerate}
\item[\rm(1)] If $F$ is a non-archimedean local field  or a global  function field, then
the abelianization map $\ab\colon H^1(F,G)\to H^1_\ab(F,G)$ bijective.

\item[\rm(2)] If $F$ is a number field, then the following commutative diagram
\[
\xymatrix@C=15mm{
H^1(F,G)\ar[r]^-\ab\ar[d]_-{\tx{loc}_\infty}             & H^1_\tx{ab}(F,G)\ar[d]^-{\tx{loc}_\infty} \\
\prod\limits_{v|\infty} H^1(F_v,G) \ar[r]^-{\prod_{v|\infty} \ab_v}   &\prod\limits_{v|\infty} H^1_\tx{ab}(F_v,G)
}
\]
is Cartesian and all arrows in it are surjective.
Here $\prod_{v|\infty}$ denotes $\prod_{v\in \V_\infty(F)}$.
\end{enumerate}
\end{theorem}

Theorem \ref{t:Brv-GA} was obtained by the first-named author
in characteristic 0
and by Gonz\'alez-Avil\'es in positive characteristic.
The following theorem computes $H^1(F,G)$ for a reductive group $G$
over a non-archimedean local field.

\begin{theorem} \label{thm:nal}
Let $G$ be a reductive group over a non-archimedean local field $F$ with fundamental group $M=\pi_1(G)$.
Then:
\begin{enumerate}
\item[\rm(1)] There is a functorial  bijection $H^1(F,G)\isoto H^1_\tx{ab}(F,G)$
and a functorial isomorphism $ H^1_\tx{ab}(F,G)\isoto M_\Gt$\hs, whence we obtain a functorial bijection
 \[H^1(F,G)\isoto M_\Gt.\]
\item[\rm(2)] There is a functorial isomorphism
\[ H^2_\ab(F,G)\isoto (M\otimes \Q/\Z)_\G=M_\G\otimes \Q/\Z.\]
\item[\rm(3)] $H^i_\ab(F,G)=0$ for $i\ge 3$.
\end{enumerate}
\end{theorem}

This result is  known in characteristic 0; see \cite[Corollary 5.5(i) and Proposition 4.1]{Brv98}.
The bijection  \,$H^1(F,G)\isoto (M_\G)_\tor$ \,goes back to Kottwitz \cite[Proposition 6.4]{Kot84} and  \cite[Theorem 1.2]{Kot86}.

\begin{proof}
Observe that $H^i_\ab(F,G)=H^i(F,T_\ssc\to T)$ and that $H^i(F,T_\ssc\to T)$
for $i\ge 1$ was computed in terms of $M$ in Theorem \ref{thm:tnloc}.
Now assertions (2) and (3) follow from assertions (2) and (3) of Theorem \ref{thm:tnloc},
and assertion (1)  follows from Theorem \ref{t:Brv-GA}(1) and  Theorem \ref{thm:tnloc}(1).
\end{proof}

\begin{remark}
Let $G$ be a  simply connected semisimple group over a non-archime\-dean local field $F$.
Then $M=0$, and Theorem \ref{thm:nal}(1) says that $H^1(F,G)=1$.
This is a classical result of Kneser \cite{Kneser65-I}, \cite{Kneser65-II},
and of Bruhat and Tits \cite{BT3};
see also Platonov and Rapinchuk \cite[Chapter 6]{PR94}.
We do not give a new proof of this result; we use it in our proof of
Theorem \ref{thm:nal} via Corollary 5.4.1 of \cite{Brv98}.
\end{remark}

\begin{theorem}\label{t:H-abelian}
Let $G$ be a reductive group over a global field $F$. Write $M=\pi_1(G)$.
With the notation of Definition \ref{d:F1}  there are functorial isomorphisms of abelian groups:
\begin{align}
&H^1_\ab(F,G)\isoto \cF^1(M/F);\tag1\\
&H^2_\ab(F,G)\isoto\cF^2(M/F);\tag2\\
&H^i_\ab(F,G)\isoto\!\!\!\!\!\prod_{v\in V_\R(F)}\!\!\!H^{i-2}(F_v, M)\quad\text{for}\ \, i\ge 3.\tag3
\end{align}
\end{theorem}

\begin{proof}
Let $T\subset G$ be a maximal torus;
then  $H^i_\ab(F,G)= H^i(F, T_\ssc\to T)$ and $M=\coker[X_*(T_\ssc)\to X_*(T)]$.
Now we obtain the theorem from Theorem \ref{thm:tnglob}.
\end{proof}

\begin{lemma}\label{l:V_f}
Let $G$ be a reductive group over a global field $F$ with fundamental group $M=\pi_1(G)$,
and let $E/F$ be as in \S\ref{ss:E/F}.
Then there are functorial isomorphisms
\begin{align}\tag{1}
\big(M[V_f(E)]_0\big)_\Gt\,\isoto\,
   &\ker\Big[H^1_\ab(F,G)\to\prod_{v|\infty} H^1_\ab(F_v,G)\Big],\\
\tag{2}
\big(M[V_f(E)]_0\big)_\G\otimes\Q/\Z\,\isoto\,
    &\ker\Big[H^2_\ab(F,G)\to\prod_{v|\infty} H^2_\ab(F_v,G)\Big].
\end{align}
\end{lemma}

\begin{proof}
We construct the isomorphism (1).
By Theorem \ref{t:H-abelian} we may and will identify $H^1_\ab(F,G)=\cF^1(M)$.
From the definition of $\cF^1(M)$ we see that
\begin{align*}
\ker\big[\cF^1(M)\to \prod_{v|\infty}H^1_\ab(F_v,G)\big]\,\cong\, &\ker\Big[ (M[V_E]_0)_\Gt\to \prod_{v|\infty} M_\Gvt\Big]\\
      =\,&\ker\Big[(M[V_E]_0)_\Gt\to M[V_\infty(E)]_\Gt\Big].
\end{align*}
It remains to show that the natural homomorphism
\[\big(M[V_f(E)]_0\big)_\Gt\, \lra\ \ker\hm\Big[ (M[V_E]_0)_\Gt\!\!\to\! M[V_\infty(E)]_\Gt \Big]\]
is an isomorphism.
This follows from Lemma  \ref{l:spl} below.
The isomorphism (2) can be constructed similarly, again using Theorem \ref{t:H-abelian} and Lemma \ref{l:spl}.
\end{proof}

\begin{lemma}\label{l:spl}
The exact sequence
\[0\to M[V_f(E)]_0\to M[V_E]_0\to M[V_\infty(E)] \]
admits a $\G$-equivariant splitting
\[\phi_*\colon M[V_\infty(E)]\to M[V_E]_0\hs,\]
and therefore
\[ M[V_E]_0\,=\, M[V_f(E)]_0\oplus \phi_*\big(M[V_\infty(E)]\big).\]
\end{lemma}

\begin{proof}
Since  for each $w\in V_\infty(E)$ the decomposition group $\G(w)$ is cyclic,
by the Chebotarev density theorem there exists a $\G$-equivariant map
\[\phi\colon V_\infty(E)\to V_f(E).\]
We define a splitting  by
\[\phi_*\colon\! \sum_{w\in V_\infty(E)}\hskip-3mm  m_w\cdot w \,
\ \longmapsto\, \hskip-2mm
\sum_{w\in V_\infty(E)}\hskip-3mm m_w\cdot \big(w-\phi(w)\big).\qedhere
\]
\end{proof}

\begin{corollary}\label{c:tot-imaginary}
If $F$ is a global field without real places (that is, a function field or a  totally imaginary number field),
then there are functorial isomorphisms
\begin{align}\tag{1}
&\big(M[V_f(E)]_0\big)_\Gt\,\isoto\, H^1_\ab(F,G),\\
\tag{2}
&\big(M[V_f(E)]_0\big)_\G\otimes\Q/\Z\,\isoto\, H^2_\ab(F,G),
\end{align}
and a functorial bijection
\begin{equation}\tag{3}
H^1(F,G)\isoto \big(M[V_f(E)]_0\big)_\Gt\hs.
\end{equation}
\end{corollary}

\begin{proof}
Indeed, then for all $v\in V_\infty(F)$ we have
$H^1(F_v,G)=1$,  \,$H^1_\ab(F_v,G)=0$, \, and \,$H^2_\ab(F_v,G)=0$.
We obtain (1) and (2) from Lemma \ref{l:V_f}, and we obtain (3) from (1) and Theorem \ref{t:Brv-GA}.
\end{proof}

Recall that an abelian group $A$ is called {\em divisible}
if for each element $a\in A$ and for each positive integer $n$,
there exists an element $a'\in A$ such that $na'=a$.

\begin{corollary}\label{l:divisible}
Let $G$ be a reductive group over a global field $F$ without real places.
Then the group $H^2_\ab(F,G)$ is divisible.
\end{corollary}

\begin{proof}
Write $M=\pi_1(G)$.
By Corollary \ref{c:tot-imaginary}(2) we have
\[ H^2_\ab(F,G)\cong \big(M[\Vbar\!\!_f]_0\big)_\G\otimes \Q/\Z,\]
which is clearly divisible.
\end{proof}

\begin{definition}\label{d:H1}
Let $G$ be a reductive group over a global field $F$, and write $M=\pi_1(G)$.
For each infinite place $v$ of $F$ consider the following map:
\[\psi_v\colon H^1(F_v,G)\labelto\ab H^1_\tx{ab}(F_v,G)\cong H^{-1}(\G\sub \vbar,M)\into M_{\G\sub \vbar,\tor}.\]
Let $\cH^1(G/F)$, or just $\cH^1(G)$ when $F$ is understood, be the fiber product of
\[\xymatrix@1@C=10mm{
(M[\Vbar]_0)_{\G,{\Tors}}\,\ar[r]^-{l_\infty}
&\,\prod_{v|\infty} M_{\G\sub \vbar,\Tors}\,   &\,\prod_{v|\infty} H^1(F_v, G)\ar[l]_{\psi_\infty}
}\]
where the  arrow at left is the product of the localization homomorphisms $l_v$
and the  arrow at right is the product of the maps $\psi_v$\hs.
\end{definition}

Recall that by Definition \ref{d:F1}, the abelian group $\cF^1(M)$ is defined to be the fiber product of
$(M[\Vbar]_0)_\Gt$ and $\prod_{v|\infty}H^{-1}(\G(\vbar), M)$ over  $\prod_{v|\infty} M_{\G\sub \vbar,\Tors}$.
For each $v\in V_\infty(F)$, consider the composite map
\[ H^1(F_v,G)\labelto{\ab} H^1_\ab(F_v,G)=H^1(F_v, T^\ssc\to T)\isoto H^{-1}(F_v,M)\]
where the right-hand arrow is the Tate--Nakayama isomorphism.
We obtain a commutative diagram
\[
\xymatrix{
(M[\Vbar]_0)_{\G,{\Tors}}\,\ar[r]^-{l_\infty}\ar[d]_-\id
&\,\prod_{v|\infty} M_{\G\sub \vbar,\Tors}\,\ar[d]^-\id   &\,\prod_{v|\infty} H^1(F_v, G)\ar[l]_{\psi_\infty}\ar[d]\\
(M[\Vbar]_0)_{\G,{\Tors}}\,\ar[r]^-{l_\infty}
&\,\prod_{v|\infty} M_{\G\sub \vbar,\Tors}\,   &\,\prod_{v|\infty} H^{-1}(\G(\vbar),M)\ar[l]
}
\]
which induces a functorial map
\begin{equation}\label{e:H-F}
\abcal\colon \cH^1(G)\to \cF^1(M)\quad \text{where}\ \, M=\pi_1(G).
\end{equation}

We now construct a map
\begin{equation}\label{e:H1-H1}
H^1(F,G)\,\to\,\cH^1(G)
\end{equation}
by describing two maps from $H^1(F,G)$:
a map into $(M[\Vbar]_0)_{\G,{\Tors}}$
and a map onto $\prod_{v|\infty} H^1(F_v, G)$.
The first map is
\[H^1(F,G)\labelto\ab H^1_\tx{ab}(F,G)\isoto \mc{F}^1(M)\into (M[\Vbar]_0)_{\G,{\Tors}}\hs;\]
compare Remark \ref{r:injective}.
The second map is the product of the localization maps
 $$\tx{loc}_v\colon H^1(F,G)\to H^1(F_v,G)$$
 over the infinite places $v\in \V_\infty(F)$.

\begin{theorem} \label{thm:main}
Let $G$ be a reductive group over a global field $F$, and write $M=\pi_1(G)$.
Then the map \eqref{e:H1-H1} is a functorial bijection
between the pointed  sets $H^1(F,G)$ and $\cH^1(G)$.
\end{theorem}

\begin{proof}
In the diagram
\[
\xymatrix@C=8mm{
H^1(F,G)\ar[r]\ar[d]_-{\tx{loc}_\infty}             & H^1_\tx{ab}(F,G)\ar[d]^-{\tx{loc}_\infty}
       \ar[r] &\big(M[\Vbar]_0\big)_\Gt\hskip-5mm \ar[d]^-{l_\infty} \\
\prod\limits_{v|\infty} H^1(F_v,G) \ar[r]   &\prod\limits_{v|\infty} H^1_\tx{ab}(F_v,G)
        \ar[r] &\prod\limits_{v|\infty}M_{\G\sub \vbar,\Tors}\hskip -2mm
}
\]
the inner rectangle at left is Cartesian by Theorem \ref{t:Brv-GA}(2),
and the inner rectangle at right is Cartesian by Theorems
\ref{thm:tnglob}(1) and \ref{t:H-abelian}(1) and Proposition \ref{p:R};
compare Definition \ref{d:F1}.
By Lemma \ref{l:MacLane} the outer rectangle is Cartesian as well,
which completes the proof of the theorem.
\end{proof}

\begin{remark}
If $F$ is a global field without real places,
then by Theorem \ref{t:Brv-GA}(2) we have a functorial bijection $H^1(F,G)\isoto H^1_\ab(F,G)$,
and by Corollary \ref{c:tot-imaginary}(3) we obtain a functorial bijection
$H^1(F,G)\isoto \big(M[\,\ov V\!_f]_0\big)_\Gt$
where $\ov V\!_f$ is the set of finite places in $\Vbar$.
In particular, when $F$ is a function field, we have a bijection $H^1(F,G)\isoto \big(M[\Vbar]_0\big)_\Gt$\hs.
\end{remark}

\begin{remark}\label{r:HPs5}
When $F$ is a global field and $G$ is a simply connected semisimple $F$-group,
we have $M=0$ and $\cH^1(G)=\prod_{v|\infty} H^1(F_v,G)$.
Then  Theorem \ref{thm:main} gives a bijection
\[ H^1(F,G)\isoto \prod\limits_{v|\infty} H^1(F_v,G),\]
which is the Hasse principle of Kneser \cite{Kneser69},
Harder \cite{Harder65}, \cite{Harder66}, \cite{Harder75}, and Chernousov \cite{Chernousov89};
see also Platonov and Rapinchuk \cite[Chapter 6]{PR94}.
We do not give a new proof of this result; we use it via Theorem \ref{t:Brv-GA}(2)
in our proof of Theorem \ref{thm:main}.
\end{remark}

\begin{proposition}\label{p:H-F}
The following diagram commutes:
 \[
\xymatrix{
H^1(F,G)\ar[r]\ar[d]_-{\ab}\ar[r]^-\sim 	&\cH^1(G)\ar[d]^-\abcal\\
H^1_\ab(F,G)\ar[r]\ar[r]^-\sim	            &\cF^1(M)
}
\]
where $\abcal$ is the map \eqref{e:H-F}.
\end{proposition}

\begin{proof}
We have commutative diagrams
\begin{equation}\label{e:slanted-1}
\begin{aligned}
\xymatrix{
H^1(F,G)\ar[dr]\ar[ddr] \ar[rrd] \\
&\cH^1(G)\ar[r] \ar[d]     &\prod_{v|\infty} H^1(F_v,G)\ar[d]\\
&\big(M[\Vbar]_0\big)_\Gt\ar[r]   &\prod_{v|\infty} M_{\G(\vbar),\Tors}
}
\end{aligned}
\end{equation}

\begin{equation}\label{e:slanted-2}
\begin{aligned}
\xymatrix{
H^1_\ab(F,G)\ar[dr]\ar[ddr] \ar[rrd] \\
&\cF^1(M)\ar[r] \ar[d]     &\prod_{v|\infty} H^1_\ab(F_v,G)\ar[d]\\
&\big(M[\Vbar]_0\big)_\Gt\ar[r]   &\prod_{v|\infty} M_{\G(\vbar),\Tors}
}
\end{aligned}
\end{equation}
where in diagram \eqref{e:slanted-2} we identify $H^{-1}(\G(\vbar),M)=H^1_\ab(F_v,G)$
using the Tate--Nakayama isomorphism of Proposition \ref{p:R}.
In these diagrams, the rectangles are Cartesian.
By construction, the slanted arrow $H^1(F,G) \to \big(M[\Vbar]_0\big)_\Gt$ in  diagram \eqref{e:slanted-1} factors via $\cF^1(M)$,
and the following diagram commutes:
\[
\xymatrix{
H^1(F,G) \ar[r]\ar[d]_-\ab  & \big(M[\Vbar]_0\big)_\Gt\hskip-5mm\ar[d]^-\id\\
H^1_\ab(F,G) \ar[r]  & \big(M[\Vbar]_0\big)_\Gt\hskip-5mm
}
\]
Moreover, the slanted arrow $H^1(F,G) \to \prod_{v|\infty} H^1(F_v,G)$ in  diagram \eqref{e:slanted-1}
fits into the following commutative diagram:
\[
\xymatrix{
H^1(F,G) \ar[r]\ar[d]_-\ab  &  \prod_{v|\infty} H^1(F_v,G)\hskip-5mm\ar[d]^\ab\\
H^1_\ab(F,G) \ar[r]  &  \prod_{v|\infty} H^1_\ab(F_v,G)\hskip-5mm
}
\]
It follows that  the diagram of the proposition  commutes.
\end{proof}

\section{Compatibility with restriction}
The abelian cohomology group $H^1_\tx{ab}(F,G)$ and the  abelianization map
\[ \ab\colon H^1(F,G)\to H^1_\tx{ab}(F,G)\]
are clearly functorial in $F$.
When $F$ is a non-archimedean local field or a global function field, the abelianization map $\ab$ is bijective,
and the compatibility diagrams for $H^1(F,G)$ are similar
to the compatibility diagrams for $H^1_\tx{ab}(F,G)=H^1(F, T_\ssc\to T)$ of Chapter \ref{s:ab-coho}.
We assume that $F$ is a number field.
We describe the compatibility with restriction.

Let $G$ be a reductive group over a number field $F$, and
let $F'/F$ be a finite extension, not necessarily Galois.
We write $V_\infty(F)$ for the set of infinite places of $F$.
We define a map
\[r_{F'/F,\infty} \colon\,\prod_{v\in \V_\infty(F)}\!\!\!\! H^1(F_v,G)\,\lra\!\!\! \prod_{v'\in \V_\infty(F')}\!\!\!\! H^1(F'_{v'},G)\]
as follows.
Let $(x_v)\in\prod_{v|\infty} H^1(F_v,G)$.
Then $r_{F'/F,\infty}$ sends $(x_v)$ to some $(x'_{v'})\in\prod_{v'|\infty}  H^1(F'_{v'},G)$.
We describe $x'_{v'}$ for $v'\in \V_\infty(F')$.
If $F'_{v'}\simeq \C$, then  $x'_{v'}=1$. If $F'_{v'}\simeq \R$, then we write $v=v'|_F\in \V_\infty(F)$.
The embedding $\iota\colon F\into F'$ induces the unique isomorphism
$\iota_{v'}\colon F_v\isoto F'_{v'}$ and a bijection $\iota_{v'*}\colon H^1(F_v,G)\isoto  H^1(F'_{v'},G)$.
We set $x'_{v'}=\iota_{v'*}(x_v)$.

We also define a map
\begin{equation}\label{e:r-F'-F-infty-M}
N_\infty^M \colon\,\prod_{v\in \V_\infty(F)}\!\!\!\! M_{\G(\vbar),\Tors}\,\lra\!\!\! \prod_{v'\in \V_\infty(F')}\!\!\!\!M_{\G(\vbar'),\Tors}
\end{equation}
to be the product of the  transfer  maps
\[N_{\G(\vbar')\lmod\G(\vbar)}\colon  M_{\G(\vbar),\Tors}\to M_{\G(\vbar'),\Tors}\hs.\]
Such a transfer map $N_{\G(\vbar')\lmod\G(\vbar)}$ is an isomorphism when the place $v'$ is real, and is trivial when $v'$ is complex.

\begin{lemma}\label{l:F'/F}
Let $F'$ be a finite extension of a number field $F$, not necessarily Galois.
With the notation of \S\ref{ss:com-res}, the following diagrams commute:
\[
\xymatrix@R=9.8mm{
H^1(F,G) \ar[r] \ar[d]_-{\tx{Res}}   &(M[\Vbar]_0)_\G\ar[d]^{N_{\G'\backslash\G}}\\
H^1(F',G)\ar[r]                      &(M[\Vbar]_0)_{\G'}
}
\qquad\quad
\xymatrix@C=12mm@R=7mm{
H^1(F,G) \ar[r]^-{\tx{loc}_\infty}\ar[d]_-{\tx{Res}}
     &\!\!\!\!\prod\limits_{v\in \V_\infty(F)}\hskip -4mm H^1(F_v,G)   \hskip-15mm\ar[d]^-{r_{F'/F,\infty}}  \\
H^1(F',G) \ar[r]^-{\tx{loc}_\infty}
     &\!\!\!\!\prod\limits_{v'\in \V_\infty(F')}\hskip-5mm H^1(F_{v'},G)\hskip-15mm
}
\]
in which the horizontal arrows in the diagram at left are defined as follows:
\[ H^1(F,G)\labelto\ab H^1_\tx{ab}(F,G)\isoto \mc{F}^1(M/F)\into (M[\Vbar]_0)_\G, \]
and similarly for $F'$ instead of $F$, where we write $\G'=\G\sub{F'}$.
\end{lemma}

\begin{proof}
We have a diagram
\[
\xymatrix{
H^1(F,G) \ar[r] \ar[d]^-{\tx{Res}}
& H^1_\tx{ab}(F,G)\ar[r]^-\sim \ar[d]^-{\tx{Res}}   & \mc{F}^1(M/F)\ar@{^(->}[r] \ar[d]^{N_{\G'\backslash\G}}
&(M[\Vbar]_0)_\G\ar[d]^{N_{\G'\backslash\G}}\\
H^1(F',G)\ar[r]
& H^1_\tx{ab}(F',G)\ar[r]^-\sim   & \mc{F}^1(M/F')\ar@{^(->}[r]
&(M[\Vbar]_0)_{\G'}
}
\]
in which the left-hand rectangle commutes because the abelianization map is functorial in $F$,
the middle rectangle commutes by Proposition \ref{p:Res-gl}(3), and the right-hand one clearly commutes.
Thus the diagram at left in the lemma commutes.

When $F'_{v'}\simeq \R$, we have commutative diagrams
\[
\xymatrix{
F\ar[r]\ar@{^(->}[d] &F_v\ar[d]_-\cong^-{\iota_{v'}}\\
F'\ar[r]             &F'_{v'}
}
\qquad\quad
\xymatrix{
H^1(F,G)\ar[r]^-{\tx{loc}_v}\ar[d]_-{\tx{Res}}    & H^1(F_v,G)\ar[d]_-\cong^-{\iota_{v'*}}\\
H^1(F',G)\ar[r]^-{\tx{loc}_{v'}}                  & H^1(F'_{v'},G)
}
\]
which show that the diagram at right in the lemma commutes.
\end{proof}

\begin{corollary}\label{c:H1-res}
Let  $r_{F'/F}\colon \cH^1(G/F)\to \cH^1(G/F')$ denote the fiber product of the maps
\[ N_{\G'\backslash\G}\colon(M[\Vbar]_0)_\G\to (M[\Vbar]_0)_{\G'}\ \, \text{and}\ \,
   r_{F'/F,\infty}\colon\!\!\!\! \prod_{v\in \V_\infty(F)}\!\!\!\! H^1(F_v,G)\to\!\!\!\!\!
   \prod_{v'\in \V_\infty(F')}\!\!\!\!\! H^1(F_{v'},G)\]
over the map \eqref{e:r-F'-F-infty-M}.
Then the following diagram commutes:
\[
\xymatrix{
H^1(F,G)\ar[r]^\sim\ar[d]_-{\tx{Res}} &\cH^1(G/F)\ar[d]^-{r_{F'/F}}\\
H^1(F',G)\ar[r]^\sim &\cH^1(G/F')
}
\]
\end{corollary}

\section{Compatibility with localization}

We assume that $F$ is a number field.

\begin{proposition}\label{p:loc}
For any place $v$ of $F$, the following diagram commutes:
\[\xymatrix{
H^1(F,G)\ar[r]\ar[d]_-{\loc_v}\ar[r]^-\sim	&\cH^1(G)\ar[d]   \\
H^1(F_v,G)\ar[r]  	&M_{\G\sss{\vbar},{\Tors}}
}\]
in which the right-hand vertical arrow is the composition
\[ \cH^1(G)\to \big(M[\Vbar]_0\big)_\Gt\labelto{l_\vbar} M_{\G\sub\vbar,\Tors}\hs.\]
\end{proposition}

\begin{proof}
Consider the diagram
\[
\xymatrix@R=7mm{
&H^1(F,G)\ar[r]\ar[d]^-{\ab}\ar[r]^-\sim\ar[ldd]_-{\loc_v}	&\cH^1(G)\ar[d]_-\abcal\ar[rd]   \\
&H^1_\ab(F,G)\ar[r]\ar[d]^-{\loc_v}\ar[r]^-\sim	&\cF^1(M)\ar[d]\ar[r] &\big(M[\Vbar]_0\big)_\Gt\ar[ld]^-{l_\vbar}  \\
 H^1(F_v,G)\ar[r]^-{\ab_v} &H^1_\ab(F_v,G)\ar[r]  	&M_{\G\sss{\vbar},{\Tors}}
}
\]
In this diagram,
the top rectangle commutes by Proposition \ref{p:H-F},
the bottom rectangle commutes by Proposition \ref{p:loc2},
and the triangles clearly commute.
It follows that the diagram of the proposition commutes.
\end{proof}

\section{Compatibility with connecting maps}

Let $F$ be a number field.
We can use $H^i_\ab$ in order to extend a cohomology exact sequence.
For the short exact sequence of reductive $F$-groups \eqref{e:G'-G-G''} in \S\ref{ss:fund-gp},
consider the following commutative diagram with exact rows
\[
\xymatrix@R=6mm@C=7mm{
H^1\hs G'\ar[r]^{\iota_*}\ar[d]^-{\ab'}             &H^1\hs G \ar[r]^{\vk_*}\ar[d]^-{\ab}            &H^1\hs G''\ar[d]^-{\ab''}\\
H^1_\ab\hs G'\ar[r]^-{\iota^1_\ab}                  &H^1_\ab \hs G\ar[r]^-{\vk^1_\ab}                &H^1_\ab \hs G''\ar[r]^{\delta^1_\ab}
&H^2_\ab\hs G'\ar[r]^-{\iota^2_\ab}                 &H^2_\ab \hs G\ar[r]^-{\vk^2_\ab}             &H^2_\ab \hs G'' \ar[r]^-{\delta^2_\ab} &\dots
}
\]
where we write $H^1\hs G$ for $H^1(F,G)$ and so on.
In this diagram, the bottom row is the long exact sequence of Lemma \ref{l:ab-exact-long}.
We define a composite map
\[\delta^1=\delta^1_\ab\circ \ab''\colon\  H^1\hs G''\to H^1_\ab\hs G''\to H^2_\ab\hs G'.\]
Then the following infinite sequence of pointed sets and abelian groups is exact:
\begin{multline}\label{e:delta-1-ab}
H^1\hs G'\labelto{\iota_*^1} H^1\hs G \labelto{\vk_*^1}  H^1\hs G''\\
\labelto{\delta^1} H^2_\ab\hs G'\labelto{\iota^2_\ab}   H^2_\ab \hs G \labelto{\vk^2_\ab}
H^2_\ab \hs G''\\
\labelto{\delta^2_\ab} H^3_\ab\hs G'\labelto{\iota^3_\ab}   H^3_\ab \hs G \labelto{\vk^3_\ab}
H^3_\ab \hs G''\labelto{\delta^3_\ab}\dots
\end{multline}
Indeed, it is clearly exact at $H^1\hs G$ and after  $H^2_\ab\hs G'$.
Since $F$ is a number field, this sequence is exact at $H^1\hs G''$ and at $H^2_\ab\hs G'$ by \cite[Proposition 5.8]{Brv98}.

We compute the sequence \eqref{e:delta-1-ab}.
Consider the following infinite sequence:
\begin{multline}\label{e:explicit-exact}
\cH^1(G')\labelto{\iota_*^1} \cH^1(G)\labelto{\vk_*^1} \cH^1(G'')
\labelto{\delta_\cH^1}\cF^2(M')\labelto{\iota_*^2}\cF^2(M)\labelto{\vk_*^2}\cF^2(M'')\\
\labelto{\delta_\cF^2} \prod_{v|\infty} H^1(\G\sss{\vbar },M')
    \labelto{\iota_*^3} \prod_{v|\infty} H^1(\G\sss{\vbar },M)\labelto{\vk_*^3}
    \prod_{v|\infty} H^1(\G\sss{\vbar },M'')
    \labelto{\delta_\infty^3}\dots
\end{multline}
In this sequence, the connecting maps $\delta_\cH^1$, $\delta_\cF^2$,
and $\delta_\infty^n$ for $n>2$ are defined as follows:
\begin{itemize}
\item The connecting map $\delta^1_\cH$ is the composite map
\[\delta^1_\cH\colon \cH^1(G'')\labelto \abcal  \cF^1(M'')\labelto{\delta^1_\cF} \cF^2(M')\]
where  $\abcal$ is the map \eqref{e:H-F},
and the connecting homomorphism $\delta^1_\cF$
is the top horizontal arrow in \eqref{e:delta-1-F}.

\item The connecting homomorphism $\delta^2_\cF$ is the composite homomorphism
\[ \delta^2_\cF\colon\, \cF^2(M'')\to \prod_\infty H^0(\G\sss{\vbar },M'')
     \to \prod_\infty H^1(\G\sss{\vbar },M')\]
where the first homomorphism comes from the definition of $\cF^2(M'')$,
and the second one is the product  of the connecting homomorphisms
\[  H^0(\G\sss{\vbar }, M'')\to H^1(\G\sss{\vbar }, M')\]
coming from the short exact sequence $0\to M'\to M\to M''\to 0$
of Lemma \ref{l:G'-G-G''-pi1}.

\item The connecting homomorphism $\delta_\infty^n$ for $n\ge3$ is the direct product of
the connecting homomorphisms
\[ H^{n-2}(\G\sss{\vbar }, M'')\to  H^{n-1}(\G\sss{\vbar }, M').\]
\end{itemize}

\begin{proposition}\label{p:connecting}
For the short exact sequence \eqref{e:G'-G-G''} in  \S\ref{ss:fund-gp}, the following diagrams with bijective vertical arrows
commute:
\[ \xymatrix{
\cH^1(G'')\ar[r]^{\delta^1_\cH}\ar@{<-}[d] &\cF^2(M')\ar@{<-}[d]  &&\cF^2(M'') \ar[r]^-{\delta^2_\cF}\ar[d]
   &\prod\limits_{v|\infty}\! H^1(\G\sss{\vbar }, M')\hskip-20mm \ar[d]^-{\prod\cup\alpha_v} \\
H^1(F,G'')\ar[r]^{\delta^1}         &H^2_\ab(F, G')  &&H^2_\ab(F_\infty, G'')\ar[r]^-{\delta^2}   &H^3_\ab(F_\infty, G')\hskip-14mm
}\hskip20mm
\]
\[
\begin{aligned}
\xymatrix@R=10mm@C=15mm{
\hskip24mm\prod\limits_{v|\infty}\! H^{n-2}(\G\sss{\vbar }, M'')\ar[r]^-{\delta^n_\infty}\ar[d]^-{\prod \cup\alpha_v}
    &\prod\limits_{v|\infty}\! H^{n-1}(\G\sss{\vbar }, M')\hskip-24mm\ar[d]^-{\prod\cup\alpha_v}\\
\hskip13mm H^n_\ab(F_\infty, G'')\ar[r]^-{\delta^n}                                 &H^{n+1}_\ab(F_\infty, G')\hskip-15mm
}
\end{aligned}
 \qquad\text{\hskip 15mm for}\ \, n>2.
 \]
In the  first diagram, the vertical arrows are the  bijection of Theorem \ref{thm:main} and the isomorphism
of Theorem \ref{t:H-abelian}(2).
\end{proposition}

This proposition together with Theorem \ref{thm:main} and  Theorem \ref{thm:tnglob}
says that the exact sequence \eqref{e:delta-1-ab} is isomorphic to
the explicitly constructed sequence \eqref{e:explicit-exact}.

\begin{proof}
Consider the following diagrams:
\[
\xymatrix@R=11mm@C6mm{
\cH^1(G'')\ar[r]^-{\delta_\cH^1}\ar[d]  &\cF^2(M')\ar@{=}[d]
\\
\cF^1(M'')\ar[r]^-{\delta^1_\cF}                       &\cF^2(M')
\\
H^1_\ab(F,G'')\ar[r]^-{\delta^1}\ar[u] &H^2_\ab(F,G')\ar[u]
}\hskip -18pt
\xymatrix@R=8.2mm@C=6mm{
 \cF^2(M'')\ar[r]^-{\delta^2_\cF}\ar[d]    &\prod\limits_{v|\infty} H^1(\G\sss{\vbar }, M')\hskip-62pt\ar@{=}[d]
\\
 \hskip65pt\prod\limits_{v|\infty}  H^0(\G\sss{\vbar },M'')\ar[r]^-{\delta^0}\ar[d]^-{\prod\cup\alpha_v}
& \prod\limits_{v|\infty}  H^1(\G\sss{\vbar },M')\hskip-64pt \ar[d]^-{\prod\cup\alpha_v}
\\
 H^2_\ab(F_\infty, G'')\ar[r]^-{\delta^2_\ab}  &H^3(F_\infty, G')\hskip-10mm
}\hskip 20mm
\]
in which the unlabeled vertical arrows come
from the definitions of $\cH^1(G'')$, $\cF^1(M'')$, and $\cF^2(M'')$.
In the diagram at left, the top rectangle commutes by the definition of the map $\delta^1_\cH$,
and the bottom one commutes by Proposition \ref{p:F-connecting-global}.
In the diagram at right, the top rectangle commutes by the definition of the homomorphism $\delta^2_\cF$,
and the bottom one commutes because cup product with $\alpha_v$ commutes with the connecting homomorphisms.
This shows that the first two diagrams of the proposition commute.
The third one commutes because cup product with $\alpha_v$ commutes with the connecting homomorphisms.
\end{proof}

\chapter{Explicit computations of Galois cohomology}
\label{sec:explicit}

In this chapter, for a reductive group $G$ over a global field $F$,
we decompose the abelian groups $H^1_\ab(F,G)$ and $H^2_\ab(F,G)$
into direct sums of finite abelian groups, each of which can be explicitly computed.
Moreover, we decompose the pointed set $H^1(F,G)$ into a direct product of a finite pointed set and an abelian group,
where the abelian group is a direct sum of finite abelian groups.
Here again, the finite pointed set and the finite abelian groups can be explicitly computed.

\section{Decomposing $M[V_E]_0$ and  $M[V_E]$ }
\label{ss:M[VE]0}

Let $M$ be a finitely generated (over $\Z$) $\GF$-module where $F$ is a global field.
Let $\Tbul=(T^{-1}\to T^0)$ be as in Chapter \ref{s:ab-coho},
in particular, $M=\coker\big[X_*(T^{-1})\to X_*(T^0)\big]$.

Let $E/F$ be a finite Galois extension in $F^s$
such that $\GF$ acts on $M$ via $\G\sub{E/F}$.
We write $\G=\G\sub{E/F}$. For a place $v\in V_F$ we denote by $\V_E(v)$ the preimage
of $v$ in $V_E$; it is an orbit of $\G$ in $V_E$.
For each $v$, we choose a lift $\bv\in \V_E(v)\subset V_E$.

Let $S_F\subset V_F$ be a finite subset.
We denote by $S_E\subset V_E$ the preimage of $S_F$ in $V_E$, and we write
$S_F^\cmp=V_F\smallsetminus S_F$, $S_E^\cmp=V_E\smallsetminus S_E$.
We may and will choose $S_F$ in such a way that
\begin{equation}\label{e:wS}
\text{for each  place $v\in S_F^\cmp$ there exists a place  $w_S\in S_E$ with}\ \G(w_S)\supseteq \G(\bv)
\end{equation}
where $\G(w_S)$ and  $\G(\bv)$ are the corresponding decomposition groups.
We fix such $w_S$ for each $v\in S_F^\cmp$\hs,  and define a $\G$-equivariant map
\[\phi\colon S_E^\cmp\to S_E\]
by sending $\gamma\cdot \bv$ to $\gamma\cdot w_S$
for $\gamma\in\G$, $v\in S_F^\cmp$.
By \eqref{e:wS} the map $\phi$ is well defined.
We fix such $S_F$ and $\phi$.

We denote
\[M[S_E]=\Big\{\sum_{w\in S_E} m_w\cdot w\ \ \big|\  \   m_w\in M\Big\},\]
and we define $M\big[S_E^\cmp\big]$ similarly.
We denote
\[M[S_E]_0=M[S_E]\cap M[V_E]_0= \Big\{\sum m_w\cdot w\in M[S_E]\ \ \Big|\ \ \sum m_w=0\Big\}.\]
We have an exact sequence
\[0\to M[S_E]_0\to M[V_E]_0\to M\big[S_E^\cmp\big],\]
and we construct a splitting
\begin{equation*}
\phitil\colon M\big[S_E^\cmp\big]\to M[V_E]_0,\quad
\sum_{w\in S_E^\cmp}\!\! m_w\cdot w\ \,\longmapsto\! \sum_{w\in S_E^\cmp}\!\! \big(m_w\cdot w-m_w\cdot \phi(w)\big).
\end{equation*}
We see that the sequence
\[0\to M[S_E]_0\to M[V_E]_0\to M\big[S_E^\cmp\big]\to 0\]
is exact.
We obtain a decomposition into a direct sum
\begin{equation}\label{e:direct-phi*}
M[V_E]_0=M[S_E]_0\oplus\phitil\big(M\big[S_E^\cmp\big]\big)
\end{equation}
and mutually inverse isomorphisms
\begin{equation*}
 M[S_E]_0\oplus M\big[S_E^\cmp\big]\hs\overset\sim\longleftrightarrow\hs M[V_E]_0
\end{equation*}
fitting into  commutative diagram
\begin{equation}\label{e:VSC1}
\begin{aligned}
\xymatrix@C=12mm{
M[S_E]_0\oplus M\big[S_E^\cmp\big]\ar[r]_-\sim^-{i_0\oplus\phitil}\ar@{^(->}[d]    & M[V_E]_0 \ar@{^(->}[d]\\
M[S_E]\oplus M\big[S_E^\cmp\big]\ar[r]_-\sim^-{i\oplus\phitil}  & M[V_E]
}
\end{aligned}
\end{equation}
In this diagram,
the maps $i_0\colon M[S_E]_0\into M[V_E]_0$ and $i\colon M[S_E]\into M[V_E]$
are the inclusion maps, and both horizontal arrows are the homomorphisms given by the formula
\begin{equation}\label{e:i-phi}
\Big(\sum_{w\in S_E}m_w\cdot w,  \sum_{w\in S_E^\cmp}m_w\cdot w  \Big)\,\longmapsto
\sum_{w\in S_E}m_w\cdot w- \sum_{w\in S_E^\cmp}m_w\cdot \phi(w) + \sum_{w\in S_E^\cmp}m_w\cdot w.
\end{equation}
Both corresponding inverse maps
are given by the formula
\begin{equation}\label{e:i+phi}
\sum_{w\in V_E}\hskip-1mm m_w\cdot w\,\longmapsto\,
    \Big(\sum_{w\in S_E}\hskip-1mm m_w\cdot w+\!\!\sum_{w\in S_E^\cmp}\hskip-1mm m_w\cdot\phi(w)\ ,\
  \, \sum_{w\in S_E^\cmp}\hskip-1.5mm m_w\cdot w\,\Big).
\end{equation}
It is easy to see that the maps \eqref{e:i-phi} and \eqref{e:i+phi} are indeed mutually inverse,
and hence they are isomorphisms.

\begin{lemma}\label{l:AV}
With the above notation, there are commutative diagrams
in which the horizontal arrows are isomorphisms:
\begin{gather}
\tag1
\hskip16mm
\begin{aligned}\xymatrix@C=18mm{
\hskip -13mm\big(M[S_E]_0\big)_\Gt
      \hs\oplus\hs\bigoplus\limits_{v\in S_F^\cmp} M_\Gvt\ar[r]^-\sim\ar[d]_-{\loc^1_S\oplus \id}
&\big(M[V_E]_0\big)_\Gt\hskip -5mm\ar[d]^-{\loc^1}\\
\hskip -8mm M[S_E]_\Gt    \hs \oplus\hs\bigoplus\limits_{v\in S_F^\cmp} M_\Gvt\ar[r]^-\sim
&M[V_E]_\Gt\hskip-4mm
}\end{aligned}
\\
\notag
\\ \tag2
\hskip 14mm
\begin{aligned}\xymatrix@C=11mm{
 \hskip -14mm\Big(\big(M[S_E]_0\big)_\G\otimes\Q/\Z\Big)\,\, \oplus\,
     \!\!\bigoplus\limits_{v\in S_F^\cmp}\Big( M_\Gv\otimes \Q/\Z\Big)\ar[d]_-{\loc^2_S\oplus \id}
\ar[r]^-\sim &\big(M[V_E]_0\big)_\G\otimes \Q/\Z\ar[d]^-{\loc^2}\\
\hskip -8mm \Big(M[S_E]_\G\otimes\Q/\Z\Big)\,\, \oplus\, \!\!\bigoplus\limits_{v\in S_F^\cmp}\Big( M_\Gv\otimes \Q/\Z\Big)
 \ar[r]^-\sim
&M[V_E]_\G\otimes \Q/\Z
}\end{aligned}
\end{gather}
where
\begin{align*}
&\loc^1_S\colon \big(M[S_E]_0\big)_\Gt \to M[S_E]_\Gt\hs,\\
&\loc^2_S\colon\big(M[S_E]_0\big)_\G\otimes\Q/\Z \to M[S_E]_\G\otimes\Q/\Z
\end{align*}
are the homomorphisms induced by the inclusion homomorphism $M[S_E]_0\into M[S_E]$.
\end{lemma}

\begin{proof}
We obtain diagrams (1) and (2)  by applying to  diagram \eqref{e:VSC1}
the functors \ $(\,\cdot\,)_\Gt$ \  and \ $(\,\cdot\,)_\G\otimes\Q/\Z$, \  respectively,
and after that using the Shapiro isomorphisms
\begin{align*}
&M\big[S_E^\cmp\big]_\Gt=\bigoplus_{v\in S_F^\cmp}M[V_E(v)]_\Gt\isoto \bigoplus_{v\in S_F^\cmp}M_\Gvt\hs,\\
&M\big[S_E^\cmp\big]_\G\otimes\Q/\Z=\bigoplus_{v\in S_F^\cmp}M[V_E(v)]_\G\otimes\Q/\Z
    \isoto \bigoplus_{v\in S_F^\cmp}M_\Gv\otimes\Q/\Z.\qedhere
\end{align*}
\end{proof}

\begin{remark}\label{r:indep-phi}
The restrictions
\begin{align*}
&\big(M[S_E]_0\big)_\Gt\to \big(M[V_E]_0\big)_\Gt\hs,\\
&\big(M[S_E]_0\big)_\G\otimes \Q/\Z\to \big(M[V_E]_0\big)_\G\otimes \Q/\Z
\end{align*}
of the top horizontal arrows in the diagrams (1) and (2) of Lemma \ref{l:AV}
are induced by the inclusion map
$M[S_E]_0\into M[V_E]_0$
and hence do not depend on the choice of the map $\phi$.
Similarly, the restrictions of the bottom horizontal arrows in diagrams (1) and (2)
\begin{align*}
&M[S_E]_\Gt\to M[V_E]_\Gt\hs,\\
&M[S_E]_\G\otimes \Q/\Z\to M[V_E]_\G\otimes \Q/\Z
\end{align*}
are induced by the inclusion map
$M[S_E]\into M[V_E]$
and hence do not depend on the choice of the map $\phi$.
\end{remark}

\section{Explicit computation of $H^1(F,\Tbul)$ and $H^2(F,\Tbul)$}
\label{ss:explicit-F}
In this section, using the decompositions of Lemma \ref{l:AV},
we decompose $H^1(F,\Tbul)$ and $H^2(F,\Tbul)$.
To that end, with the assumptions and notation of \S\ref{ss:M[VE]0},
we assume also that $E$ has no real places, and
we choose $S_F\subset V_F$ in such a way that $S_F$ contains all archimedean places of $F$ and that
for each (finite) place $v\in S_F^\cmp$ there exists a {\em finite} place  $w_S\in S_E$
satisfying \eqref{e:wS}.
We use these places $w_S$ when constructing the map $\phi\colon S_E^\cmp\to  S_E$.

\begin{remark}\label{r:factors}
Since by construction for any $w\in S_E^\cmp$ the place  $\phi(w)$ is finite,
we see that the composite map
\[M[S_E^\cmp]\labelto{\phitil} M[V_E]_0\lra M\big[V_\infty(E)\big]\]
is the zero map, and therefore the canonical projection $M[V_E]_0\to  M\big[V_\infty(E)\big]$
factors via  the direct summand $M[S_E]_0$ in the decomposition \eqref{e:direct-phi*}.
We also have a decomposition into a direct sum
\begin{equation}\label{e:direct-phi*-bis}
M[V_E]=M[S_E]\oplus\phitil\big(M\big[S_E^\cmp\big]\big)
\end{equation}
compatible with \eqref{e:direct-phi*},
and the canonical projection $M[V_E]\to  M\big[V_\infty(E)\big]$
factors via  the direct summand $M[S_E]$ in the decomposition \eqref{e:direct-phi*-bis}.
\end{remark}

According to Definition \ref{d:F1}, the abelian group $\cF^1(M)$
is the fiber product of the homomorphisms (described in the definition)
\[\xymatrix@1@C=10mm{
\big(M[V_E]_0\big)_\Gt\,\ar[r]^-{l_\infty^1} &\,\bigoplus\limits_{v|\infty}\! M_{\Gv,\hs\Tors}\, &\,\bigoplus\limits_{v|\infty}\! H^{-1}(\G\sub{\bv},M)\ar[l]
}\]
Let $l_{\infty,S}^1$ denote the restriction of the homomorphism $l_\infty^1$
to the direct factor $\big(M[S_E]_0\big)_\Gt$ of  $(M[V_E]_0)_\Gt$\hs.
By Remark \ref{r:factors} the homomorphism $l_\infty^1$ factors via $l_{\infty,S}^1$.
 We define $\cF_{\! S}^1(M)$ to be the fiber product of the homomorphisms
\[\xymatrix@1@C=10mm{
\big(M[S_E]_0)_\Gt\,\ar[r]^-{l_{\infty,S}^1} &\bigoplus\limits_{v|\infty}\! M_{\Gv,\hs\Tors}\, &\bigoplus\limits_{v|\infty} \! H^{-1}(\G\sub{\bv},M).\ar[l]
}\]
We write $\MM^1_\infty=\bigoplus_{v|\infty} M_\Gvt$\hs, and we denote by $\FP^1$ the fiber product functor
\[(\,\cdot\,)\,\underset{\,\hs\MM^1_\infty}{\boldsymbol{\times}}\, \bigoplus_{v|\infty} H^{-1}(\G(\bv),M)\,\cong\,
(\,\cdot\,)\,\underset{\,\hs\MM^1_\infty}{\boldsymbol{\times}}\, \bigoplus_{v|\infty} H^1(F_v,\Tbul)\]
where the isomorphism comes from Proposition \ref{p:R}.
Then
\[ \cF^1(M)= \FP^1\big((M[V_E]_0)_\Gt\big)
   \quad\text{and}\quad\cF^1_S(M)=\FP^1\big((M[S_E]_0)_\Gt\big).\]

Similarly, according to Definition \ref{d:F2}, the abelian group $\cF^2(M)$
is the fiber product of the homomorphisms
\[\xymatrix@1@C=10mm{
\big(M[V_E]_0\big)_\G\otimes\Q/\Z\,\ar[r]^-{l_\infty^2} &\bigoplus\limits_{v|\infty}\! M_\Gv\otimes \Q/\Z\,
       &\bigoplus\limits_{v\in\infty}\! H^{0}(\Gv,M)\ar[l]
}\]
Let $l_{\infty,S}^2$ denote the restriction of the homomorphism $l_\infty^2$
to the direct factor $\big(M[S_E]_0\big)_\G\otimes \Q/\Z$ of  $(M[V_E]_0)_\G\otimes\Q/\Z$.
By Remark \ref{r:factors} the homomorphism $l_\infty^2$ factors via $l_{\infty,S}^2$.
We define $\cF_{\! S}^2(M)$ to be the fiber product of the homomorphisms
\[\xymatrix@1@C=10mm{
\big(M[S_E]_0\big)_\G\otimes\Q/\Z\,\ar[r]^-{l_{\infty,S}^2} &\bigoplus\limits_{v|\infty}\! M_\Gv\otimes\Q/\Z\,
      &\bigoplus\limits_{v|\infty}\! H^{0}(\Gv,M).\ar[l]
}\]
We write $\MM^2_\infty=\bigoplus_{v|\infty} M_\Gv\otimes\Q/\Z$, and we denote by $\FP^2$ the fiber product functor
\[(\,\cdot\,)\,\underset{\,\hs\MM^2_\infty}{\boldsymbol{\times}}\, \bigoplus_{v|\infty} H^0(\G(\bv),M)\,\cong\,
(\,\cdot\,)\,\underset{\,\hs\MM^2_\infty}{\boldsymbol{\times}}\, \bigoplus_{v|\infty} H^2(F_v,\Tbul)\]
where the isomorphism comes from Proposition \ref{p:R}.
Then
\[ \cF^2(M)= \FP^2\big((M[V_E]_0)_\G\otimes\Q/\Z\big) \quad\text{and}
    \quad \cF^2_S(M)=\FP^2\big(\hs(M[S_E]_0)_\G\otimes \Q/\Z\hs\big).\]

\begin{proposition}\label{p:FS}
With the above notation, the groups $H^1(F,\Tbul)$ and $H^2(F,\Tbul)$ fit into the following commutative diagrams in which the horizontal arrows are isomorphisms:
\begin{gather}
\tag{1}
\hskip4mm
\begin{aligned}
\xymatrix@C=14mm{
\hskip -0.1mm\cF^1_S(M)\hs\oplus\!\bigoplus\limits_{v\in S_F^\cmp}\! M_\Gvt\ar[d]_-{l^1_S\oplus \id}\ar[r]^-\sim
&H^1(F,\Tbul)\ar[d]^-{\loc^1}
\\
\hskip -12.7mm\bigoplus\limits_{v\in S_F}\!\! H^1(F_v,\Tbul) \hs \oplus\!\!\bigoplus\limits_{v\in S_F^\cmp}\! M_\Gvt\ar[r]^-\sim
&\bigoplus\limits_{v\in V_F}\! H^1(F_v, \Tbul)
}
\end{aligned}
\\ \notag
\\ \tag{2}
\begin{aligned}
\xymatrix@C=10mm{
\hskip 6mm\cF^2_S(M)\hs\oplus\!\!\bigoplus\limits_{v\in S_F^\cmp}\! M_\Gv\otimes\Q/\Z\ar[d]_-{l^2_S\oplus \id}\ar[r]^-\sim
&H^2(F,\Tbul)\ar[d]^-{\loc^2}
\\
\hskip-7.5mm\bigoplus\limits_{v\in S_F}\!\! H^2(F_v,\Tbul) \hs \oplus\!\!\bigoplus\limits_{v\in S_F^\cmp}\! M_\Gv\otimes\Q/\Z \ar[r]^-\sim
&\bigoplus\limits_{v\in V_F}\!\! H^2(F_v, \Tbul)
}
\end{aligned}
\end{gather}
\end{proposition}

We specify the left-hand vertical arrows in diagrams (1) and (2).
In diagram (1) in the arrow $l^1_S\oplus \id$, the map $l^1_S$ is the natural homomorphism
obtained by applying the fiber product functor $\FP^1$
to the homomorphism  $\big(M[S_E]_0\big)_\Gt\to M[S_E]_\Gt$
induced by the inclusion  homomorphism $M[S_E]_0\into M[S_E]$.
For the finite places $v\in S_F$, we use the isomorphisms $M_\Gvt\cong H^1(F_v,\Tbul)$
of Theorem \ref{thm:tnloc}(1).
Similarly, in diagram (2) the map $l^2_S$ is the natural homomorphism
obtained by applying the fiber product functor $\FP^2$
to the homomorphism  $\big(M[S_E]_0\big)_\G\otimes\Q/\Z\to M[S_E]_\G\otimes \Q/\Z$
induced by the inclusion  homomorphism $M[S_E]_0\into M[S_E]$.
For the finite places $v\in S_F$, we use the isomorphisms $(M\otimes\Q/\Z)_\Gv\cong H^1(F_v,\Tbul)$
of Theorem \ref{thm:tnloc}(2).

\begin{proof}
We construct diagram (1) from the diagram (1) of Lemma \ref{l:AV}.
From each group in that diagram, there is a canonical homomorphism to
$$\MM^1_\infty\coloneqq \bigoplus_{v|\infty} M_\Gvt\hs.$$
We apply the functor $\FP^1$
to all groups in the  diagram  (1) of Lemma \ref{l:AV}
and compute the four groups in the obtained diagram; see below.
We use Remark  \ref{r:factors} and Lemma  \ref{l:fiber-factor}
from Appendix \ref{app:fiber}.

{\em The top-left group:}
By  Remark  \ref{r:factors}  the homomorphism
\[l^1_\infty\colon \big(M[S_E]_0\big)_\Gt\hs\oplus\hs\bigoplus\limits_{v\in S_F^\cmp}\! M_\Gvt \ \lra\  \MM^1_\infty\coloneqq \bigoplus_{v|\infty} M_\Gvt\]
factors via the direct summand  $\big(M[S_E]_0\big)_\Gt$\hs, and  by Lemma \ref{l:fiber-factor}
we have a functorial isomorphism
\begin{multline*}
\FP^1\bigg(\hs(M[S_E]_0)_\Gt\oplus\hs\bigoplus\limits_{v\in S_F^\cmp}\! M_\Gvt\bigg)\\ \cong\FP^1\Big(\hs(M[S_E]_0)_\Gt\Big)
\oplus \bigoplus\limits_{v\in S_F^\cmp}\! M_\Gvt = \cF^1_S(M) \oplus \bigoplus\limits_{v\in S_F^\cmp}\! M_\Gvt\hs
\end{multline*}
where the equality follows from the definition of $\cF^1_S$.

{\em The top-right group:}
We have a functorial isomorphism
\[\FP^1\Big(\hs(M[V_E]_0)_\Gt\Big)=\cF^1(M)\isoto H^1(F,\Tbul)\]
where the equality is the definition of $\cF^1(M)$
and the isomorphism at right is inverse to that of Theorem \ref{thm:tnglob}(1).

{\em The bottom-left group:}
By  Remark  \ref{r:factors} and  Lemma \ref{l:fiber-factor}
we have  functorial isomorphisms
\begin{multline*}
\FP^1 \bigg(M[S_E]_\Gt  \hs \oplus\hs\bigoplus\limits_{v\in S_F^\cmp} M_\Gvt\bigg)
\cong \FP^1\Big( M[S_E]_\Gt\Big) \oplus \bigoplus\limits_{v\in S_F^\cmp} M_\Gvt\\
\cong\bigoplus\limits_{v\in S_F}\!\! H^1(F_v,\Tbul) \hs \oplus\hs\bigoplus\limits_{v\in S_F^\cmp} M_\Gvt\hs.
\end{multline*}

{\em The bottom-right group:}
By Lemma \ref{l:fiber-factor} we have
\begin{multline*}
 \FP^1\big([M[V_E]_\Gt\big)\cong
\FP^1\bigg(\, \bigoplus_{v\in V_\infty(F)} M_\Gvt\bigg)
\oplus \bigoplus_{v\in V_f(F)} M_\Gvt\\
\cong\bigoplus_{v\in V_\infty(F)} H^1(F_v,\Tbul)\ \oplus\bigoplus_{v\in V_f(F)} H^1(F_v,\Tbul)
=\bigoplus_{v\in V_F} H^1(F_v,\Tbul)
\end{multline*}
(we use Theorem \ref{thm:tnloc}(1)\hs).

We have constructed diagram (1) by applying the fiber product  functor $\FP^1$
to the diagram (1) of Lemma \ref{l:AV} and  using Theorem \ref{thm:tnglob}(1),
the definitions of the groups  $\cF^1_S(M)$ and $\cF^1(M)$,
and Theorem \ref{thm:tnloc}(1).
Similarly, one can construct diagram (2) by applying the fiber product  functor $\FP^2$
to the diagram (2) of Lemma \ref{l:AV} and  using Theorem \ref{thm:tnglob}(2),
the definitions of the groups $\cF^2_S(M)$ and $\cF^2(M)$, and Theorem \ref{thm:tnloc}(2).
\end{proof}

\begin{remark}\label{r:indep-phi-Tbul}
The restrictions
\[ \cF^n_S(M)\to H^n(F,\Tbul),\quad\ n=1,2\]
of the top horizontal arrows in the diagrams (1) and (2) of Proposition \ref{p:FS}
are induced by the inclusion homomorphism $M[S_E]_0\into M[V_E]_0$
and hence do not depend on the choice of the map $\phi$.
Similarly, the  restrictions
\[ \bigoplus\limits_{v\in S_F}\! H^n(F_v, \Tbul)\to \bigoplus\limits_{v\in V_F}\! H^n(F_v, \Tbul),\quad\ n=1,2\]
of the bottom horizontal arrows in the diagrams (1) and (2) of Proposition \ref{p:FS}
are induced by the inclusion $S_F\into V_F$
and hence do not depend on the choice of the map $\phi$.
\end{remark}

\section{Decomposing $H^1(F,G)$}
\label{ss:explicit-H}

Let $F$ be a global field and $G$ be a reductive $F$-group with fundamental group
$M=\pi_1(G)$. We use the notation of \S\ref{ss:M[VE]0};
in particular, we write $\G$ for $\G\sub{E/F}$.

\begin{theorem}\label{t:FS}
Let $G$ be a reductive group over a global field $F$.
Write $M=\pi_1(G)$.
Then with the assumptions and  notation of \S\ref{ss:M[VE]0} and \S\ref{ss:explicit-F},
there are isomorphisms of abelian groups:
\begin{align}
\tag1  &\cF_{\!S}^1(M)\oplus \bigoplus_{v\in S_F^\cmp} M_{\Gvt}\hs \isoto\hs H^1_\ab(F,G),\\
\tag2 &\cF_{\!S}^2(M)\oplus \bigoplus_{v\in S_F^\cmp} M_{\Gv}\otimes\Q/\Z\hs\isoto \,   H^2_\ab(F,G).
\end{align}
\end{theorem}

\begin{proof}
The theorem follows immediately from Proposition \ref{p:FS}
applied to the complex of tori $(T_\ssc\to T)$.
\end{proof}

According to Definition \ref{d:H1}, the pointed set $\cH^1(G)$ is the fiber product of the maps
\begin{equation*}
\xymatrix@1@C=10mm{
\big(M[V_E]_0)_\Gt\,\ar[r]^-{l_\infty^1} &\,\MM^1_\infty
    &\,\bigoplus\limits_{v|\infty} H^1(F_v,G).\ar[l]
}
\end{equation*}
By Theorem \ref{thm:main} we may identify  $\cH^1(G)$ with $H^1(F,G)$.
For $S_F\subset V_F$ as in \S\ref{ss:M[VE]0} and \S\ref{ss:explicit-F}, let $l_{\infty,S}^1$
denote the restriction of the homomorphism $l_\infty^1$
to the direct factor $\big(M[S_E]_0\big)_\Gt$ of  $(M[V_E]_0)_\Gt$
as in the diagram (1) of Lemma \ref{l:AV}.
By Remark \ref{r:factors} the homomorphism $l_\infty^1$ factors via $l_{\infty,S}^1$\hs.
We define $\cH_{S}^1(G)$ to be the fiber product of the maps
\begin{equation*}
\xymatrix@1@C=10mm{
\big(M[S_E]_0)_\Gt\,\ar[r]^-{l_{\infty,S}^1} &\,\MM^1_\infty
    &\,\bigoplus\limits_{v|\infty} H^{1}(F_v,G).\ar[l]
}
\end{equation*}

\begin{theorem}\label{t:explicit}
Let $G$ be a reductive group over a global field $F$
with algebraic fundamental group $M=\pi_1(G)$.
Let $E/F$ be a finite Galois extension in $F^s$
such that $\GF$ acts on $M$ via $\G\sub{E/F}$
and that $E$ has no real places. Write $\G=\G\sub{E/F}$.
 Let  a map $v\mapsto \bv\colon V_F\to V_E$, a subset $S_F\subset V_F$,
and a $\G$-equivariant map $\phi\colon S_E^\cmp\to S_E$
be as in \S\ref{ss:M[VE]0} and \S\ref{ss:explicit-F}.
Then the pointed set $H^1(F,G)\cong\cH^1(G)$  fits
into a commutative diagram of pointed sets with bijective horizontal arrows
\begin{equation*}
\begin{aligned}
\xymatrix@C=13mm{
\cH_{\!S}^1(G)\times\! \bigoplus\limits_{v\in S_F^\cmp}\!\! M_{\Gv,\Tors} \ar[r]^-\sim
     \ar[d]_-{l^1_S\hs\times\hs\id}
&H^1(F,G)\ar[d]^-{\loc^1}
\\
\hskip-14mm\bigoplus\limits_{v\in S_F} H^1(F_v,G)\times\!
      \bigoplus\limits_{v\in S_F^\cmp}\!\! M_{\Gv,\Tors} \ar[r]^-\sim
&\bigoplus\limits_{v\in V_F}\!\! H^1(F_v,G)
}\end{aligned}
\end{equation*}
in which
\[l^1_S\colon \cH^1_S(G)\to \bigoplus_{v\in S_F} H^1(F_v,G)\quad\
    \text{and}\quad\ \loc^1\colon H^1(F,G)\to \bigoplus_{v\in V_F} H^1(F_v,G) \]
are the localization maps, and  where for the non-archimedean places $v\in S_F$ we identify
$H^1(F_v,G)=M_{\Gvt}$ as in  Theorem \ref{thm:nal}(1).
\end{theorem}

\begin{proof}
Similarly to the proof of Proposition \ref{p:FS},
we obtain the diagram of the theorem  from the diagram (1) of Lemma \ref{l:AV}
by taking fiber product of each of the four groups in that diagram
with the pointed set $\prod_{v|\infty} H^1(F_v,G)$ over
$\MM^1_\infty\coloneqq\bigoplus_{v|\infty} M_\Gvt$
and  using Theorem \ref{thm:tnglob}(1), the definitions of the pointed sets $\cH^1_S(G)$ and $\cH^1(G)$,
and Theorem  \ref{thm:nal}(1).
Remark  \ref{r:factors} permits us to apply  Lemma \ref{l:fiber-factor}.
\end{proof}

We observe that the restrictions
\[ \cH_{\!S}^1(G)\to H^1(F,G)\quad\ \text{and}\quad\
   \bigoplus\limits_{v\in S_F} H^1(F_v,G)\to \bigoplus\limits_{v\in V_F} H^1(F_v,G)\]
of the horizontal arrows in the diagram of Theorem \ref{t:explicit}
are induced by the inclusion $S_F\into V_F$ and
hence do not depend on the choice of the map $\phi$.

\chapter {Computations with Tate--Shafarevich kernels}
\label{sec:Shafarevich}

In this chapter, using results of Chapter \ref{sec:explicit},
for a reductive group $G$ over a global field $F$
we compute the Tate--Shafarevich groups $\Sha^n_\ab(F,G)$ for $n=1,2$
as subgroups of $H^n_\ab(F,G)$,
and we compute the Tate--Shafarevich group $\Sha^1(F,G)$ as a subset of the pointed set $H^1(F,G)$.

\section{The Tate--Shafarevich groups $\Sha^n(M)$ and $\Sha^n_S(M)$}

Let $E/F$ be a finite Galois extension of global fields with Galois group $\G=\G(E/F)$,
and let $M$ be a finitely generated $\G$-module.
Let $S_E\subseteq V_E$ be a non-empty $\G$-invariant set of places of $E$, not necessarily finite,
and let $S_F\subseteq V_F$ denote its image in $V_F$.
 We define:

\begin{multline*}
\Sha^1_S(M)=\ker\Big[\big(M[S_E]_0\big)_\Gt\!\to M[S_E]_\Gt\Big]\\=
   \ker\Big[\big(M[S_E]_0\big)_\Gt\!\to\! \bigoplus_{v\in S_F}\!M_\Gvt\Big],
\end{multline*}\vskip-5mm

\begin{multline*}
\Sha^1(M)=\Sha^1_V(M)=\ker\Big[\big(M[V_E]_0\big)_\Gt\!\to M[V_E]_\Gt\Big]\\=
   \ker\Big[\big(M[V_E]_0\big)_\Gt\!\to\! \bigoplus_{v\in\V_F}\!M_\Gvt\Big],
\end{multline*}\vskip-5mm

\begin{multline*}
\Sha^2_S(M)=\ker\Big[\hs \big(M[S_E]_0\big)_\G\otimes \Q/\Z\to M[S_E]_\G\otimes\Q/\Z\Big]\\
         =\ker\Big[\hs \big(M[S_E]_0\big)_\G\otimes\Q/\Z\to\bigoplus_{v\in S_F}M_\Gv\otimes\Q/\Z\Big],
\end{multline*}\vskip -5mm

\begin{multline*}
\Sha^2(M)=\Sha^2_V(M)=\ker\Big[\hs \big(M[V_E]_0\big)_\G\otimes \Q/\Z\to M[V_E]_\G\otimes\Q/\Z\Big]\\
         =\ker\Big[\hs \big(M[V_E]_0\big)_\G\otimes\Q/\Z\to\bigoplus_{v\in V_F}M_\Gv\otimes\Q/\Z\Big].
\end{multline*}

Now let $S_E,S_E'\subseteq V_E$ be two $\G$-invariant subsets, not necessarily finite,
and assume that there exists a $\G$-equivariant map $\phi\colon S_E\to S'_E$.
Then $\phi$ induces a $\G$-equivariant  homomorphism
\[\phi_*\colon M[S_E]_0\to M[S'_E]_0, \quad\ \sum_{w\in S_E}m_w\cdot w\mapsto \sum_{w\in S_E} m_w\cdot\phi(w),\quad m_w\in M\]
and homomorphisms of $\Sha$-groups
\[\phi_*^1\colon \Sha^1_S(M)\to \Sha^1_{S'}(M),\qquad \phi_*^2\colon \Sha^2_S(M)\to \Sha^2_{S'}(M).\]
We have the following surprising lemma:
\begin{lemma}\label{l:phi-psi 1-2}
The induced homomorphisms $\phi_*^n\colon \Sha^n_S(M)\to \Sha^n_{S'}(M)$ for $n=1,2$
do not depend on the choice of $\phi$.
\end{lemma}

\begin{proof}
Let $n=1$ and let
\[[\mtil]\in\Sha_S^1(M)\subset \big( M[S_E]_0\big)_\Gt\hs,
    \quad\ \mtil\in M[S_E]_0, \quad\ \mtil=\sum_{w\in S_E} m_w\cdot w,\quad\ m_w\in M.\]
Since $[\mtil]\in \Sha_S^1(M)$, the image of $[\mtil]$ in $M[S_E]_\Gt$ is zero, that is,
\[\mtil=\sum_{\gamma\in\G}(\gamma\cdot a_\gamma-a_\gamma)
   \quad\ \text{for some}\ \,a_\gamma\in M[S_E],\quad a_\gamma
   =\sum_{w\in S_E}a_{\gamma,w}\cdot w, \quad a_{\gamma,w}\in M.\]
Then
\[\phi_*^1(\mtil)=\sum_{\gamma\in\G} \big(\gamma\cdot\phi(a_\gamma)-\phi(a_\gamma)\big).\]

Let $\psi\colon S_E\to S'_E$ be another $\G$-equivariant homomorphism.
Similarly we obtain that
\[\psi_*^1(\mtil)=\sum_{\gamma\in\G} \big(\gamma\cdot\psi(a_\gamma)-\psi(a_\gamma)\big).\]
Then
\[ \psi_*^1(\mtil)-\phi_*^1(\mtil)=\sum_{\gamma\in\G}(\gamma\cdot b_\gamma-b_\gamma)
      \quad\ \text{where}\ \,b_\gamma=\psi(a_\gamma)-\phi(a_\gamma).\]
We have
\[b_\gamma=\sum_{w\in S_E}\big(a_{\gamma,w}\cdot\psi(w)-a_{\gamma,w}\cdot\phi(w)\big).\]
We write
\[b_\gamma=\sum_{w'\in S_E'}b_{\gamma,w'}\cdot w'\ \ \text{with}\ \ b_{\gamma,w'}\in M;
\quad\ \text{then}\ \ \sum_{w'\in S_E'} b_{\gamma,w'}=0\]
and $b_\gamma\in M[S_E']_0$\hs.  Therefore,
\[ [\psi_*^1(\mtil)-\phi_*^1(\mtil)]=0\in \big(M[S_E']_0\big)_\Gt\hs.\]
We conclude that
$$\psi_*^1[\mtil]-\phi_*^1[\mtil]=0
   \in \Sha^1_{S'_E}(M)\subset\big(M[S_E']_0\big)_\Gt\hs,$$
as required.

Let $n=2$. Let
\[[\mtil]\in\Sha_S^2(M)\subset \big( M[S_E]_0\otimes\Q/\Z\big)_\G\hs,
\quad \mtil=\sum_{w\in S_E} m_w\cdot w,\quad m_w\in M\otimes\Q/\Z.\]
Since $[\mtil]\in \Sha_S^2(M)$, the image of $[\mtil]$ in $\big(M[S_E]\otimes\Q/\Z\big)_\G$ is zero, that is,
\begin{align*}
&\mtil=\sum_{\gamma\in\G}(\gamma\cdot a_\gamma-a_\gamma)
 \quad  \ \text{for some}\ \,a_\gamma\in M[S_E]\otimes\Q/\Z,\\
&a_\gamma=\sum_{w\in S_E}a_{\gamma,w}\cdot w, \quad a_{\gamma,w}\in M\otimes\Q/\Z.
   \end{align*}
Then
\[\phi_*^2(\mtil)=\sum_{\gamma\in\G} \big(\gamma\cdot\phi(a_\gamma)-\phi(a_\gamma)\big)
\quad\ \text{and}\quad\
\psi_*^2(\mtil)=\sum_{\gamma\in\G} \big(\gamma\cdot\psi(a_\gamma)-\psi(a_\gamma)\big).\]
We obtain that
\[ \psi_*^2(\mtil)-\phi_*^2(\mtil)=\sum_{\gamma\in\G}(\gamma\cdot b_\gamma-b_\gamma)
      \quad\ \text{where}\ \,b_\gamma=\psi(a_\gamma)-\phi(a_\gamma).\]
We have
\[b_\gamma=\sum_{w\in S_E}\big(a_{\gamma,w}\cdot\psi(w)-a_{\gamma,w}\cdot\phi(w)\big).\]
We write
\[b_\gamma=\sum_{w'\in S_E'}b_{\gamma,w'}\cdot w'\ \ \text{with}\ \ b_{\gamma,w'}\in M\otimes\Q/\Z;
\quad\ \text{then}\ \ \sum_{w'\in S_E'} b_{\gamma,w'}=0.\]
Since the short exact sequence
\[0\to M[S_E']_0 \to M[S_E']\to M\to 0\]
splits, the induced sequence
\[0\to M[S_E']_0\otimes\Q/\Z \to M[S_E']\otimes\Q/\Z\to M\otimes\Q/\Z\to 0\]
also splits and hence is exact.
We know that $b_\gamma \in  M[S_E']\otimes\Q/\Z$ and that
the image of $b_\gamma$ in $M\otimes\Q/\Z$ is zero.
It follows that $b_\gamma\in M[S_E']_0\otimes \Q/\Z$\hs. Therefore,
\[ [\psi_*(\mtil)-\phi_*(\mtil)]=0\in \big(M[S_E']_0\otimes\Q/\Z\big)_\G\hs.\]
We conclude that $$\psi_*^2[\mtil]-\phi_*^2[\mtil]=0\in \Sha^2_{S'_E}(M)
   \subset\big(M[S_E']_0\otimes \Q/\Z\big)_\G\hs,$$
   as required.
\end{proof}

\begin{corollary}\label{c:phi-circ-psi}
Assume that there exist $\G$-equivariant maps (not necessarily mutually inverse)
\[\phi\colon S_E\to S'_E\quad\ \text{and}\quad\ \psi\colon S'_E\to S_E\hs.\]
Then
\[\psi_*^n\circ \phi_*^n=\id \quad\ \text{and}\quad\ \phi_*^n\circ \psi_*^n=\id \]
for $n=1,2$.
In particular, $\phi^n_*$ and $\psi^n_*$ are isomorphisms.
\end{corollary}

\begin{proof}
We apply Lemma \ref{l:phi-psi 1-2}
to the pairs of $\G$-equivariant maps
\[(\psi\circ\phi, \id)\colon\hs S_E\to S_E\quad\ \text{and}\quad
      \ (\phi\circ\psi, \id)\colon\hs S'_E\to S'_E\hs.\qedhere\]
\end{proof}

\begin{corollary}\label{c:SE-VE}
Let $S_E\subset V_E$ be a $\G$-invariant subset such that there exists
a $\G$-equivariant map $\phi\colon V_E\to S_E$\hs.
Let $\iota\colon S_E\into V_E$ denote the inclusion map.
Then the homomorphism
\[\iota_*^n\colon\Sha^n_S(M)\to\Sha^n(M)\coloneqq\Sha^n_V(M)\]
for $n=1,2$ is an isomorphism.
\end{corollary}

\begin{proof}
We apply Corollary \ref{c:phi-circ-psi} to the pair of $\G$-equivariant maps $(\iota,\phi)$.
\end{proof}

\section{The Tate--Shafarevich groups for $(T^{-1}\to T^0)$}

For a homomorphism with finite kernel $T^{-1}\to T^0$ of tori over a global field $F$,
we write $M=\coker\big[X_*(T^{-1})\to X_*(T^0)\big]$.

With the assumptions and notation of \S\ref{ss:M[VE]0}  and \S\ref{ss:explicit-F},
for $n\ge 1$ we define the Tate--Shafarevich groups
\[\Sha^n(F,\Tbul)=\ker\Big[H^n(F,\Tbul)\to \prod_{v\in\V_F} H^n(F_v,\Tbul)\Big].\]
where $\Tbul=(T^{-1}\to T^0)$.
By Theorem \ref{thm:tnglob}(3) we have $\Sha^n(F,T^{-1}\to T^0)=0$ for $n\ge 3$.

The commutative diagrams of Proposition \ref{p:loc2}
show that the homomorphisms
\begin{align*}
&H^1(F, \Tbul)\isoto \cF^1(M)\to \big(M[V_E]_0\big)_\Gt\hs,\\
&H^2(F,  \Tbul)\isoto \cF^2(M)\to \big(M[V_E]_0\big)_\G\otimes\Q/\Z
\end{align*}
induce homomorphisms
\begin{align}
&\Sha^1(F,  \Tbul)\to \Sha^1(M),\label{e:Sha1}\\
&\Sha^2(F,  \Tbul)\to \Sha^2(M).\label{e:Sha2}
\end{align}

\begin{proposition}\label{l:Sha-iso}
The homomorphisms \eqref{e:Sha1} and \eqref{e:Sha2} are isomorphisms.
\end{proposition}

\begin{proof}
Let $\mtil\in \Sha^1(M)\subset \big(M[V_E]_0\big)_\Gt$\hs.
Then the image of $\mtil$ in the group $\prod_{v|\infty} M_\Gvt$ is zero.
Consider the element $1_\infty=\prod_{v|\infty} 1\in \prod_{v|\infty} H^1(F_v,\Tbul)$.
Then $\mtil$ and $1_\infty$ have the same image $0$ in $\prod_{v|\infty} M_\Gvt$\hs, whence
$(\mtil,1_\infty)\in \cF^1(M)$.
By Theorem \ref{thm:tnglob}(1) the pair $(\mtil,1_\infty)$ comes from some $s\in H^1(F,\Tbul)$.
By construction we have $\loc_v(s)=1\in H^1(F_v,\Tbul)$ for all infinite places $v$.
For all places $v$, by Proposition \ref{p:loc2} the image of $\loc_v(s)$ in $M_\Gvt$ is zero.
Since the bottom horizontal arrows in Proposition \ref{p:loc2} are bijective for all finite places $v$,
we have $\loc_v(s)=1\in H^1(F_v,\Tbul)$ for such $v$.
Thus $s\in \Sha^1(F,\Tbul)$.
Clearly, $\mtil$ is the image of $s$,  which proves the surjectivity of \eqref{e:Sha1}.

We prove the injectivity. Let $s\in\Sha^1(F,\Tbul)$ be an element
with image 0 in  $\big(M[V_E]_0\big)_\Gt$.
It has image $1_\infty$ in $\prod_{v|\infty} H^1(F_v,\Tbul)$,
and by Theorem \ref{thm:tnglob}(1) it equals 1,
which proves the injectivity of the homomorphism \eqref{e:Sha1}.
Thus \eqref{e:Sha1} is an isomorphism.
The proof for \eqref{e:Sha2} is similar.
\end{proof}

\begin{lemma}\label{l:cokernels}
For the finite group $\G=\G(E/F)$ and the  $\G$-module $$M=\coker[X_*(T^{-1})\to X_*(T^0)]$$
as in \S\ref{ss:M[VE]0}, let $S_E\subseteq V_E$ be a nonempty $\G$-invariant subset,
and let $S_F$ denote the image of $S_E$ in $V_F$.
Then there are functorial isomorphisms of abelian groups
\begin{align}
&\coker\Big[ \bigoplus_{v\in S_F} H_1(\G\sub \bv,M)\to H_1(\G,M)\Big]\isoto \Sha^1_S(M),\tag1\\
&\coker\Big[ \bigoplus_{v\in S_F} M_\Gvt\to M_\Gt\Big]\isoto \Sha^2_S(M)\tag2
\end{align}
where the homomorphisms $H_1(\G\sub \bv,M)\to H_1(\G,M)$ and $M_\Gvt\to M_\Gt$ are the corestriction maps.
\end{lemma}

\begin{proof}
By Theorem \ref{t:Hinich} in Appendix \ref{app:Hinich}, the short exact sequence of $\G$-modules
\[ 0\to M[S_E]_0\to M[S_E]\to M\to 0\]
gives rise to a homology exact sequence
\begin{align*}
\dots\labelto{\delta_2} H_1&(\G,M[S_E]_0)\to H_1(\G,M[S_E]) \to H_1(\G,M) \\
\labelto{\delta_1}&\big(M[S_E]_0\big)_\Gt \to M[S_E]_\Gt \to M_\Gt\\
&\labelto{\delta_0} \big(M[S_E]_0 \otimes \Q/\Z\big)_\G\to \big(M[S_E]\otimes \Q/\Z\big)_\G
     \to  (M\otimes \Q/\Z)_\G\to 0.
\end{align*}
We have identifications
\[ H_1(\G,M[S_E])=\bigoplus_{v\in S_F} H_1(\G\sub \bv,M),\ \quad\  (M[S_E])_\Gt=\bigoplus_{v\in S_F} M_\Gvt\hs\]
given by the Shapiro isomorphisms for homology in degrees $i=1$ and $i=0$.
More precisely, we have the direct sum decomposition
\[ M[S_E]=\bigoplus_{v \in S_F}M[V_E(v)]\]
of $\G$-modules, and the isomorphism
\[ \Z[\G]\otimes_{\Z[\G\sub\bv]}M \to M[V_E(v)]\]
of $\Z[\G]$-modules
that sends \,$[\gamma]\otimes m$ \,to \,$\gamma \cdot (m \cdot [\bv])$
\,for $\gamma\in\G$, $m\in M$, and $v\in S_F$\hs.
We also have the Shapiro isomorphism
\[ H_i(\Gv,M) \to H_i(\G,\Z[\G]\otimes_{\Z[\Gv]}M)\]
that is given as the composition of the homomorphism
\[ H_i(\Gv,M) \to H_i(\Gv,\Z[\G]\otimes_{\Z[\Gv]}M)\]
induced by the inclusion $M \to \Z[\G]\otimes_{\Z[\Gv]}M$
that sends $m$ to $[1]\otimes m$, and of the corestriction map
\[ H_i(\Gv,\Z[\G]\otimes_{\Z[\Gv]}M) \to H_i(\G,\Z[\G]\otimes_{\Z[\Gv]}M).\]
This leads to the isomorphism $H_i(\Gv,M) \to H_i(\G,M[V_E(v)])$
given as the composition of the homomorphism
\[ H_i(\Gv,M)\to H_i(\Gv,M[V_E(v)])\]
induced by the embedding  $M \to M[V_E(v)]$ sending $m$ to $m\cdot[\bv]$,
and of the corestriction map
\[ H_i(\Gv,M[V_E(v)]) \to H_i(\G,M[V_E(v)]).\]
On the other hand, the homomorphism $H_i(\G,M[V_E(v)]) \to H_i(\G,M)$
is given by the augmentation map $M[V_E(v)] \to M$.
Using the functoriality of the corestriction map and the fact
that composing the inclusion $M \to M[V_E(v)]$ with the augmentation $M[V_E(v)] \to M$ gives the identity on $M$,
we see that composing the isomorphism $H_i(\Gv,M) \to H_i(\G,M[V_E(v)])$
with the augmentation $H_i(\G,M[V_E(v)]) \to H_i(\G,M)$ produces the corestriction map.
The lemma follows.
\end{proof}

\begin{corollary}\label{c:coker}
Let $T^{-1}\to T^0$ be as above,
and let $S_E\subset V_E$ be a $\G$-invariant subset such that there exists
a $\G$-equivariant map $\phi\colon V_E\to S_E$\hs.
Then there are  functorial isomorphisms of finite abelian groups
\begin{align}
&\coker\Big[\hs \bigoplus_{v\in S_F} H_1(\G\sub \bv,M)\to H_1(\G,M)\Big]\isoto \Sha^1(F,T^{-1}\to T^0),\tag1\\
&\coker\Big[\hs \bigoplus_{v\in S_F} M_\Gvt\to M_\Gt\Big]\isoto \Sha^2(F,T^{-1}\to T^0)\tag2.
\end{align}
\end{corollary}

\begin{proof}
By Lemma \ref{l:cokernels} we have isomorphisms
\begin{align*}
&\coker\Big[ \bigoplus_{v\in S_F} H_1(\G\sub \bv,M)\to H_1(\G,M)\Big]\isoto \Sha^1_S(M),\\
&\coker\Big[ \bigoplus_{v\in S_F} M_\Gvt\to M_\Gt\Big]\isoto \Sha^2_S(M).
\end{align*}
Since there exists a $\G$-equivariant map $\phi\colon V_E\to S_E$\hs,
by Corollary \ref{c:SE-VE} we have isomorphisms $\Sha^n_S(M)\isoto\Sha^n(M)$ for $n=1,2$.
By Proposition \ref{l:Sha-iso} we have isomorphisms
\[\Sha^n(M) \isoto \Sha^n(F,T^{-1}\to T^0) \quad\ \text{for}\ \,n=1,2.   \]
The corollary follows.
\end{proof}

\section{The Tate--Shafarevich groups for a reductive group}
\label{ss:Sha}

Let $G$ be a reductive group over a global field $F$ with fundamental group $M=\pi_1(G)$.
We use the notation and assumptions of \S\ref{ss:M[VE]0} and \S\ref{ss:explicit-F}.

Recall that the Tate--Shafarevich kernel of $G$ is
\[\Sha^1(F,G)=\ker\Big[H^1(F,G)\to \prod_{v\in\V_F} H^1(F_v,G)\Big].\]
Recall that $H^i_\ab(F,G)=H^i(F, T_\ssc\to T)$.
We define
\begin{align*}
&\Sha^1_\ab(F,G)=\ker\Big[H^1_\ab(F,G)\to \prod_{v\in\V_F} H^1_\ab(F_v,G)\Big],\\
&\Sha^2_\ab(F,G)=\ker\Big[H^2_\ab(F,G)\to \prod_{v\in\V_F} H^2_\ab(F_v,G)\Big].
\end{align*}

\begin{proposition}\label{p:ab-Sha}
The abelianization map $\ab\colon H^1(F,G)\to H^1_\ab(F,G)$
induces a functorial   bijection $\ab_\Sha\colon\Sha^1(F,G)\isoto \Sha^1_\ab(F,G)$.
\end{proposition}

\begin{proof} The induced map $\ab_\Sha$ is clearly functorial in $G$ and $F$.
By \cite[Theorem 5.12]{Brv98} this map is bijective.
\end{proof}

\begin{proposition}\label{p:Sha-Sha-Sha}

Let $S_E$ be such that  there exists a $\G$-equivariant map $V_E\to S_E$.
Then there are isomorphisms of finite abelian groups
\begin{align}
&\Sha^1_\ab(F,G)\isoto \Sha^1(M)\isoto\Sha^1_S(M),\tag{1}\\
&\Sha^2_\ab(F,G)\isoto \Sha^2(M)\isoto\Sha_S^2(M).\tag{2}
\end{align}
\end{proposition}

\begin{proof}
The proposition follows from Proposition \ref{l:Sha-iso} and Corollary \ref{c:SE-VE}.
\end{proof}

\begin{corollary}\label{c:coker-ab}
For $G$ and $S_E$ as above:
\begin{enumerate}

\item[\rm (i)] {\rm \hs (goes back to \cite[Corollary 5.13]{Brv98} and Kottwitz \cite[(4.2.2)]{Kot84}\hs)}
There is a  bijection of finite sets
\begin{equation*}
\coker\Big[\, \bigoplus_{v\in S_F} H_1(\G\sub \bv,M)\to H_1(\G,M)\Big]\isoto \Sha^1(F,G);
\end{equation*}

\item[\rm (ii)] There is  an isomorphism of finite abelian groups
\begin{equation*}
\coker\Big[\, \bigoplus_{v\in S_F} M_\Gvt\to M_\Gt\Big]\isoto \Sha^2_\ab(F,G)
\end{equation*}
where $H_1(\G\sub \bv,M)\to H_1(\G,M)$ and $M_\Gvt\to M_\Gt$ are the corestriction maps.
\end{enumerate}
\end{corollary}

\begin{proof}
The corollary follows from
Proposition \ref{p:ab-Sha} and Corollary \ref{c:coker}.
\end{proof}

\begin{remark}The bijections of  Proposition \ref{p:ab-Sha}
and Corollary \ref{c:coker-ab}(1)
define a functorial abelian group structure on the finite pointed set $\Sha^1(F,G)$,
which is the same group structure as the one defined by Sansuc \cite[Theorem 8.5]{Sansuc81};
compare Kottwitz \cite[\S4]{Kot84}.
\end{remark}

\begin{corollary}
Assume that either (1) there exists $v\in V_F$ with $\G\sub \bv=\G$, or (2) the group $\G$ is cyclic.
Then $\Sha^1(F,G)=1$ and $\Sha^2_\ab(F,G)=1$.
\end{corollary}

\begin{proof}
Take $S_E=V_E$.
If (1) holds, then both cokernels in Corollary \ref{c:coker-ab} are trivial,
whence we obtain that $\Sha^1(F,G)=1$ and $\Sha^2_\ab(F,G)=1$.
By the Chebotarev density theorem, (2) implies (1).
\end{proof}

\section{Sylow-cyclic groups}

We change our notation.  Let $\G$ be an arbitrary finite group
and $M$ be a finitely generated $\G$-module.
For a Sylow $p$-subgroup $P$ of $\G$, let $i_P\colon P\into\G$ denote the inclusion homomorphism.

\begin{lemma}[{Andersen \cite{Andersen}}]
\label{l:Andersen}
The following homomorphisms are surjective, $P$ running over the Sylow subgroups of $\G$:
\begin{equation}\tag{1}
\xymatrix@C=15mm{\bigoplus_P H_k(P,M)\ar[r]^-{\sum i_{P*}} &H_k(\G,M)}\quad\text{for}\ \,k\ge 0,
\end{equation}
\begin{equation}\tag{2}
\xymatrix@C=15mm{\bigoplus_P M_{P,\Tors}\ar[r]^-{\sum i_{P*}} &M_\Gt\hs,}
\end{equation}
where $H_k$ denotes the $k$-th group homology, and $i_{P*}$ are the corresponding corestriction maps.
\end{lemma}

\begin{proof} We prove that (1) is surjective.
Since $H_k(\G,M)$ is a finite abelian group, it is the direct sum
 of its $p$-torsion parts  $H_k(\G,M)_{(p)}$:
\[ H_k(\G,M)=\bigoplus_p H_k(\G,M)_{(p)}\hs.\]
For a Sylow $p$-subgroup $P$ of $\G$, consider the restriction map
\[\res_{\G/P}\colon H_k(\G,M)\to H_k(P,M).\]
We have
\begin{equation}\label{e:cor-res}
i_{P*}\circ \res_{\G/P}=[\G:P]\colon\hs H_k(\G,M)\to H_k(P,M)\to H_k(\G,M);
\end{equation}
see \cite[Theorem 9.94]{RotmanHA}.
Since $[\G:P]$ is prime to $p$, the map \eqref{e:cor-res} is an isomorphism
when restricted to the $p$-torsion part $H_k(\G,M)_{(p)}$ of $H_k(\G,M)$.
It follows that the homomorphism
\[i_{P*}\colon H_k(P,M)\to H_k(\G,M)_{(p)}\]
is surjective, and hence so is the homomorphism (1) of the lemma.

 We prove that (2) is surjective.
From \eqref{e:cor-res} for $k=0$,
for a Sylow $p$-subgroup $P$ of $\G$
we obtain that
\[ i_{P*}\circ \res_{\G/P}=[\G:P]\colon\hs M_\Gt\to M_{P,\Tors}\to M_\Gt\hs,\]
and we conclude as for (1).
\end{proof}

We return to the notation of \S\ref{ss:Sha}.
In particular, $M=\pi_1(G)$.
The absolute Galois group $\G_F$ acts on $M$.
Let $\G_\eff$ denote the image of $\G_F$ in $\Aut M$,
and let $E/F$ be the finite Galois extension in $F^s$
with Galois group $\G(E/F)=\G_\eff$.

\begin{proposition}\label{p:Sylow-decomp}
Assume that any Sylow subgroup $P$ of\/ $\G_\eff$ is a decomposition group for $E/F$,
that is, there exists a place $w$ of $E$
whose stabilizer $\G_\eff(w)$ in $\G_\eff$ is $P$.
Then
(1) $\Sha^1(F,G)=1$, \.and \,(2) $\Sha^2_\ab(F,G)=1$.
\end{proposition}

\begin{proof}
By Corollary \ref{c:coker-ab}(i) and our assumption, $\Sha^1(F,G)$ is an image of the co\-kernel
\[\coker\bigg[\bigoplus_P H_1(P,M)\to H_1(\G_\eff,M)\bigg] \]
where $P$ runs over the Sylow subgroups of $\G_\eff$.
By Lemma \ref{l:Andersen} this cokernel is 0, which gives (1).
The proof of (2) is similar to that of (1); we use Corollary \ref{c:coker-ab}(ii).
\end{proof}

\begin{definition}
Following a suggestion of Kasper Andersen,
we say that a finite group $\D$ is {\em Sylow-cyclic} if all its Sylow subgroups  are cyclic.
Sansuc \cite[page 13]{Sansuc81} calls Sylow-cyclic subgroups {\em metacyclic}.
Note that any cyclic group is Sylow-cyclic.
\end{definition}

\begin{corollary}
Assume that the group $\G_\eff$ is Sylow-cyclic. Then
\begin{enumerate}
\item (\hs\cite[Corollary 2.10]{Borovoi11} in characteristic 0) $\Sha^1(F,G)=1$;
\item $\Sha^2_\ab(F,G)=1$.
\end{enumerate}
\end{corollary}

\begin{proof}
Indeed, then by the Chebotarev density theorem, all Sylow subgroups are decomposition groups,
and we conclude by Proposition \ref{p:Sylow-decomp}.
\end{proof}

\section[A computer-assisted computation]{Example: a computer-assisted computation \\ of $H^1(F,G)$ and  $\Sha^1(F,G)$}
\label{ss:computer}
Let $F=\Q\big(\sqrt{7}\hs\big)$ and let $E=F(\zeta_8)$
where $\zeta_8$ is a primitive 8-th root of unity.
Write $\G=\G\sub{E/F}$.
We embed $\Z/8\Z$ into $(R_{E/F}\SL_8)(F)=\SL_8(E)$ by sending $1+8\Z$ to $\zeta_8$.
We set $\KK=\Z/8\Z\subset R_{E/F}\SL_8$.
Following Sansuc \cite[Example 5.8]{Sansuc81},  we consider  $G=(R_{E/F}\SL_8)/\KK$.

Write $M=\pi_1(G)$. By Corollary \ref{c:ss-pi1} in Appendix \ref{app:duality}, we have
\[ M\cong\Hom(X^*(\KK),\Q/\Z).\]
We obtain
\begin{gather*}
 X^*(\KK)=\Hom(\Z/8\Z,\ov F^\times)\cong\mu_8\hs,\\
M\cong\Hom(\mu_8,\Q/\Z)=\Hom(\mu_8,\tfrac18\Z/\Z)\cong\Hom(\mu_8,\Z/8\Z).
\end{gather*}
We fix an isomorphism (not $\G$-equivariant)
 $\Hom(\mu_8,\Z/8\Z)\isoto\Z/8\Z$
by sending a homomorphism $\chi\colon\mu_8\to\Z/8\Z$ to $\chi(\zeta_8)\in \Z/8\Z$.

\begin{lemma}\label{l:KConrad}
For $F=\Q\big(\sqrt{7}\hs\big)$ and the Galois extension $E=F(\zeta_8)$,
all decomposition groups are cyclic.
\end{lemma}

\begin{proof}[Proof due to K. Conrad \cite{KConrad}.]
Since $\Q(\zeta_8)/\Q$ ramifies only at $2$ (we are ignoring infinite places),
$E/F$ can ramify only at a place over $2$,
and there is just one such place in $F$ since
$2$ is totally ramified in $F$: \,$(2) = \pp^2$ where $\pp = (2,1+\sqrt{7}\hs)$.
For all other finite places $v$, the decomposition groups are cyclic.

To figure out the ramification and decomposition group at $\pp$,
one can pass to the completion and compute there.
We have $F_{\pp} = \Q_2(\sqrt{7}\hs)$, which equals $\Q_2(\sqrt{-1}\hs)$ since $-7$ is a square in $\Q_2$. Thus
\[
F_{\pp}(\zeta_8) = \Q_2(\sqrt{7},\zeta_8) = \Q_2(\sqrt{-1},\zeta_8)= \Q_2(\zeta_8),
\]
which is a quadratic totally ramified extension of $F_{\pp} = \Q_2(\sqrt{-1}\hs)$.
So the decomposition group $\G\sub \pp$ has order  2: it is cyclic.
 \end{proof}

\begin{lemma}\label{l:8-ab-M}
For $G$ as above,
for each of the two archimedean places $v$ of $F$, the canonical map
\[H^1(F_v,G)\onto H^1_\ab(F_v,G)\into M_\Gvt\]
is in fact bijective.
\end{lemma}

\begin{proof}
We have $F_v\cong \R$ and $E_\bv\simeq \C$.
Write $G_v=G\times_F F_v$; then $(G_v)_\ssc\cong R_{\C/\R}\SL_8$
and $H^1(\R, (G_v)_\ssc)\cong H^1(\C,\SL_8)=1$.
The nontrivial element (complex conjugation)
of the Galois group $\G\sub \bv=\G\sub{E_\bv/F_v}\cong\G\sub{\C/\R}$
sends $\zeta_8$ to $\zeta_8^{-1}$, and so it acts on $M$ as $-1$.
We see that
\[M_\Gvt= M/2M\quad\ \text{and}\quad\ H^1_\ab(F_v, G)\cong H^{-1}(\G\sub \bv, M)\cong M/2M.\]
Therefore, the injective homomorphism $H^1_\ab(F_v,G)\into M_\Gvt$ is bijective.

We have the exact sequence  \eqref{e:Gsc}:
\begin{equation}\label{e:RGv}
H^1(\R, (G_v)_\ssc)\to H^1(\R, G_v)\labelto\ab H^1_\ab(\R,G_v)\hs.
\end{equation}
We see that the kernel of the abelianization map $\ab$ comes from $H^1(\R, (G_v)_\ssc)$.
Let $\eta=[c]$ be an element of $H^1(\R,G_v)$, where $c\in Z^1(\R,G_v)$.
We twist \eqref{e:RGv} by $c$; see Serre \cite[Chapter I, \S5.3]{SerGC}.
We obtain that the fiber $\ab^{-1}(\ab(\eta))\subset H^1(\R,G_v)$
comes from $H^1(\R,{}_c(G_v)_\ssc)$, where $_c(G_v)_\ssc$
is the corresponding inner twisted form of $(G_v)_\ssc$.
All  inner twisted forms of $(G_v)_\ssc=R_{\C/\R}\,\SL_8$
are classified by the image in $H^1(\R, \Aut R_{\C/\R}\SL_8)$
of the Galois cohomology set
\[ H^1(\R,\hs R_{\C/\R}\PGL_8)\cong H^1(\C,\hs\PGL_8)=1.\]
Thus  all inner forms of $(G_v)_\ssc=R_{\C/\R}\hs\SL_8$
are isomorphic to $(G_v)_\ssc$\hs,
and we see that
 \[H^1(\R,{}_c(G_v)_\ssc)\cong H^1(\R,\hs R_{\C/\R}\SL_8)=1,\]
and therefore the fiber $\ab^{-1}(\ab(\eta))$ consists of one element $\eta$.
Thus the surjective map $\ab\colon H^1(F_v,G)\onto H^1_\ab(F_v,G)$ is bijective, which completes the proof.
\end{proof}

\begin{corollary}\label{c:bijective}
For $G$ as above,
the surjective abelianization map
$H^1(F, G)\labelto\ab H^1_\ab(F,G)$ is bijective, and the canonical homomorphism
\[ H^1_\ab(F,G) \to \big(M[V_E]_0)_\Gt=\big(M[V_E]_0)_\G\]
is an isomorphism.
\end{corollary}

\begin{proof}
 By Theorem \ref{t:Brv-GA}(2), the pointed set $H^1(F,G)$ is the fiber product of the maps
\[ H^1_\ab(F,G)\longrightarrow \prod_{v|\infty} H^1_\ab(F_v,G)\longleftarrow  \prod_{v|\infty} H^1(F_v,G).\]
By Lemma  \ref{l:8-ab-M} the arrow at right is bijective, and therefore the map
$H^1(F, G)\labelto\ab H^1_\ab(F,G)$ is bijective  as well.

 By Theorem \ref{t:H-abelian}(1), the group $H^1_\ab(F,G)$ is the fiber product of the homomorphisms
\[ \big(M[V_E]_0\big)_\Gt\longrightarrow \prod_{v|\infty}M_\Gvt \longleftarrow\prod_{v|\infty} H^{-1}(\G(\bv),M).\]
By Lemma  \ref{l:8-ab-M}  and the Tate--Nakayama theorem,
the arrow at right is bijective, and therefore the homomorphism
$H^1_\ab(F,G)\to \big(M[V_E]_0\big)_\Gt$ is bijective as well.

 We have $\big(M[V_E]_0)_\Gt=\big(M[V_E]_0)_\G$ because $M$ is torsion.
\end{proof}

We identify $M$ with $\Z/8\Z=\{\bar x=x+8\Z\ |\  x\in \Z\}$.
Then $\Aut(M)=(\Z/8\Z)^\times=\{\hs\bar 1,\bar 3,\bar 5,\bar 7\hs\}$
acting on $M=\Z/8\Z$ by  $\bar a\cdot\bar x=\ov{ax}$.
The action of $\G=\G\sub{E/F}$ on $M$ permits us to identify  $\G$ with $(\Z/8\Z)^\times$.
The group $\G=(\Z/8\Z)^\times$ has three nontrivial cyclic subgroups:
$\langle\hs\bar3\hs\rangle, \langle\hs\bar5\hs\rangle, \langle\hs\bar7\hs\rangle$.
For each $a=3,5,7$ we choose a place   $v_a$ of $F$
with decomposition group $\langle\hs\bar a\hs\rangle$.
We set
\begin{equation*}
S_F=\{ v_3, v_5, v_7\}\subset V_F\hs,\quad\  S_E=\{ \bv_3, \bv_3', \bv_5, \bv_5', \bv_7, \bv_7'\}\subset V_E
\end{equation*}
where for $a=3,5,7$ we denote by $\bv_a$ and $\bv_a'$ the two lifts of $v_a\in V_F$ to $V_E$\hs.
We write $S_F^\cmp=V_F\smallsetminus S_F$\hs, \,$S_E^\cmp=V_E\smallsetminus S_E$.
Since by Lemma \ref{l:KConrad} all decomposition groups are cyclic,
we see that the three cyclic subgroups
\[\G(\bv_3)=\langle\bar 3\rangle,\  \G(\bv_5)=\langle\bar 5\rangle,\  \G(\bv_7)=\langle\bar 7\rangle\]
are all maximal decomposition subgroups in $\G$,
and therefore our set $S_F$ satisfies \eqref{e:wS}.
We choose a $\G$-equivariant map $\phi\colon S_E^\cmp\to S_E$.
By diagram (1) of Lemma \ref{l:AV}, we have an isomorphism of abelian groups
\[\big(M[S_E]_0\big)_\G
      \oplus\bigoplus_{v\in S_F^\cmp} M_\Gv\isoto \big(M[V_E]_0\big)_\G\hs.\]

We computed the finite abelian group $\big(M[S_E]_0\big)_\G$
using the GAP System for Computational Discrete Algebra  \cite{GAP4};
see the listing of our computer program and the output of this program
in Appendix F to the arXiv version \cite{BorKal23} of this paper.
We obtained the following:
\begin{equation}\label{e:2-8}
\big(M[S_E]_0\big)_\G=\langle\eta_2,\eta_8\rangle\simeq \Z/2\Z\oplus\Z/8\Z
\end{equation}
where $\eta_2$ is of order 2 and $\eta_8$ is of order 8.
Here
\begin{equation*}
\eta_8=[\bv_3- \bv_5]\in \big(M[S_E]_0\big)_\G
\quad\ \text{where}\ \ \bv_3\hs, \bv_5\in S_E\subset M[S_E].
\end{equation*}
Moreover, we obtained that
\begin{equation}\label{e:kerMSE0}
\ker\Big[\hs\loc_S\colon \big(M[S_E]_0\big)_\G\to M[S_E]_\G\hs\Big]=\langle 4\eta_8\rangle\simeq \Z/2\Z.
\end{equation}
It follows from  \eqref{e:kerMSE0} that
\begin{multline}\label{e:Sha}
\Sha^1(F,G)\cong\Sha^1_\ab(F,G)\cong\ker\Big[ \big(M[S_E]_0\big)_\Gt\to M[S_E]_\Gt\hs\Big]\\
=\ker\Big[ \big(M[S_E]_0\big)_\G\to M[S_E]_\G\Big]\simeq \Z/2\Z.
\end{multline}

\begin{remark}
One can deduce the existence of a bijection $\Sha^1(F,G)\simeq\Z/2\Z$ of  \eqref{e:Sha} from known results.
Indeed, we have $H^1_\ab(F,G)\cong H^2(F,\KK)$, whence
\[\Sha^1(F,G)\cong\Sha^1_\ab(F,G)\cong \Sha^2(F,\KK).\]
By the  Poitou--Tate duality (see  \cite[Theorem (8.6.7)]{NSW08}
or Harari \cite[Theorem 17.13(b)]{Harari20}\hs), we have
\[\Sha^2(F,\KK)\cong\Hom\big(\Sha^1(F,X^*(\KK)\hs),\Q/\Z\big)
     \cong\Hom\big(\Sha^1(F,\mu_8),\Q/\Z\big).\]
Thus $\Sha^1(F,G)\cong \Hom\big(\Sha^1(F,\mu_8),\Q/\Z\big)$.
Moreover, for $F=\Q(\sqrt7)$ we have that $\Sha^1(F,\mu_8)\cong\Z/2\Z$,
see Sansuc \cite[(2.7)]{Sansuc81},
whence $\Sha^1(F,G)\cong \Z/2\Z$.
\end{remark}

Proposition \ref{p:FS}  explicitly describes
the infinite abelian group $H^1_\ab(F,G)$
as a direct sum  of finite abelian groups
where each of the summands can be explicitly computed.
In particular, using \eqref{e:2-8} and \eqref{e:kerMSE0} we can prove the following proposition.

\begin{proposition}\label{p:using-computer}
For $G=(R_{E/F}\SL_8)/\KK$ as above,
the subgroup $\Sha^1_\ab(F,G)$ is not a direct summand of $H^1_\ab(F,G)$.
\end{proposition}

\begin{proof}
We embed $\big(M[S_E]_0\big)_\G\into \big(M[V_E]_0\big)_\G$ and identify
$\big(M[V_E]_0\big)_\G$ with $H^1_\ab(F,G)$ as in Corollary \ref{c:bijective}.
Set
\[ a=4\eta_8\in \big(M[S_E]_0\big)_\G\subset\big(M[V_E]_0\big)_\G= H^1_\ab(F,G).\]
By \eqref{e:kerMSE0} the element $a$ is the generator of the subgroup of order 2
\ $\Sha^1_\ab(F,G)\subset H^1_\ab(F,G)$.
Set
\[ b=2\eta_8\in \big(M[S_E]_0\big)_\G\subset\big(M[V_E]_0\big)_\G=H^1_\ab(F,G).\]
Then $a=2b$.
Now the proposition follows from Lemma \ref{l:not-direct} below.
\end{proof}

Let $A$ be a finite abelian group.
Let $e(A)$ denote the {\em exponent of}\/ $A$,
that is, the least common multiple of the orders of elements of $A$.
Since $A$ is abelian, there exists an element $a_e$ of $A$ of order $e(A)$.
Moreover, $e(A)\cdot a=0$ for all $a \in A$.

\begin{lemma}[probably known]\label{l:not-direct}
Let $A$ be a nontrivial finite subgroup of an abelian group $B$.
Let $a_e\in A$ be an element of order $e(A)$.
Let $p$ be a prime factor of $e(A)$.
Assume that there exists an element $b\in B$ such that $pb=a_e$.
Then $A$ is not a direct summand of $B$.
\end{lemma}

\begin{proof}[Proof suggested by an anonymous referee]
If $B = A \oplus A'$,   then $b = x + x'$ with $x \in A$ and $x' \in A'$.
We obtain that  $a_e = pb = px + px'$ and thus $a_e = px$ and $px' = 0$ since $a_e \in A$.
But then $a_e$ is killed by $e(A)/p$, which is a contradiction.
\end{proof}

\begin{corollary}[probably known]
\label{c:div-not-direct}
Consider inclusions of abelian groups $A\subset D\subseteq B$ where $A$ is a nontrivial finite subgroup of a divisible group $D$.
Then $A$ is not a direct summand of $B$.
\end{corollary}

\begin{proof}
Indeed, let $a_e\in A$ and $p\hs|\hs e(A)$ be as in Lemma \ref{l:not-direct}.
Since $D$ is divisible, there exists an element $b\in D\subset B$ such that $p b=a_e$\hs,
and we conclude by the lemma.
\end{proof}

\section{$\Sha^2_\ab(F,G)$ is not a direct summand}
\label{ss:not-direct}

\begin{theorem}\label{t:not-direct}
Let $G$ be a reductive group over a global field $F$.
If $\Sha^2_\ab(F,G)$ is nontrivial,
then it is not a direct summand of $H^2_\ab(F,G)$.
\end{theorem}

\begin{corollary}\label{c:not-direct}
Let $T$ be an torus over a global field $F$.
If  $\Sha^2(F,T)$ is nontrivial,
then it is not a direct summand of $H^2(F,T)$.
\end{corollary}

If $F$ is a function field, then by Corollary \ref{l:divisible} the group $H^2_\ab(F,G)$ is divisible,
and by Corollary \ref{c:div-not-direct} $\Sha^2_\ab(F,G)$
is not a direct summand of $H^2_\ab(F,G)$.
It remains to prove Theorem \ref{t:not-direct} in the case when $F$ is a number field. We need a lemma.

\begin{lemma}\label{l:ker-alpha}
With the notation of \S\ref{ss:M[VE]0}, consider the
exact sequence of $\G$-modules
\[ 0\to M[V_f(E)]_0\labelto\iota M[V_E]_0\labelto{\vk} M[V_\infty(E)]\]
where $\iota$ is the inclusion map, and $\vk$ is the restriction to $M[V_E]_0$
of the natural projection $M[V_E]\to M[V_\infty(E)]$.
Then the following induced sequence is exact:
\[0\to  \big(M[V_f(E)]_0\big)_\G\otimes \Q/\Z\labelto{\iota_*} \big(M[V_E]_0\big)_\G\otimes \Q/\Z
 \labelto{\vk_*}  M[V_\infty(E)]_\G\otimes \Q/\Z\to 0.\]
\end{lemma}

\begin{proof}
We show that $\vk$ admits a $\G$-equivariant splitting (homomorphic section).
Indeed, it follows from the  Chebotarev density theorem that there exists a $\G$-equivariant map
$\phi\colon V_\infty(E)\to V_f(E)$. We construct a $\G$-equivariant splitting:
\[\phitil\colon M[V_\infty(E)]\to M[V_E]_0\hs, \quad \sum_{w\in V_\infty(E)}\!\!\!\!\!\! m_w\cdot w\ \,\longmapsto\!\!\!\!\!
\sum_{w\in V_\infty(E)}\!\!\!\!\! \big(m_w\cdot w-m_w\cdot\phi(w)\big)\]
where $m_w\in M$. The lemma follows from  the existence of such a  splitting.
\end{proof}

\begin{proof}[Proof of Theorem \ref{t:not-direct}]
By Theorem \ref{t:H-abelian}(2) we may and will identify the groups
$H^2_\ab(F,G)=\cF^2(M)$.
We embed the fiber product $\cF^2(M)$ into the direct product
\[ \Big( (M[V_E]_0)_\G\otimes \Q/\Z\Big)\,\times\,\prod_{v|\infty} H^0(\Gv, M).\]
We consider the subgroup
\[\ker \vk_*\hs\times \{0\}\subseteq \Big( (M[V_E]_0)_\G\otimes \Q/\Z\Big)\,\times\,\prod_{v|\infty} H^0(\Gv, M)\]
where $\vk_*$ is as in Lemma \ref{l:ker-alpha}.
Then we have
\begin{equation}\label{e:ker-vk}
\ker\vk_*\hs\times \{0\}\subseteq \cF^2(M).
\end{equation}

On the other hand, by the definition of $\cF^2(M)$  there is a commutative diagram
\[
\xymatrix{
\cF^2(M)\ar[r]\ar[d]_-{\loc_\infty}        &\big(M[V_E]_0\big)_\G\otimes \Q/\Z\ar[d]^-{l_\infty}\\
\prod\limits_{v|\infty}H^2_\ab(F_v,G)\ar[r]   &\prod\limits_{v|\infty} M_\Gv\otimes \Q/\Z
}
\]
where the right-hand vertical arrow $l_\infty$ factors as
\[l_\infty\colon\,\big(M[V_E]_0\big)_\G\otimes \Q/\Z\labelto{\vk_*} M[V_\infty(E)]_\G\otimes \Q/\Z
   \, \isoto\, \bigoplus\limits_{v|\infty} M_\Gv\otimes \Q/\Z.\]
From this diagram we see that
\begin{equation}\label{e:subset2}
\Sha^2(M)\subset \ker l_\infty\hs\times \{0\}= \ker \vk_*\hs\times \{0\}.
\end{equation}
From  \eqref{e:ker-vk} and \eqref{e:subset2} we obtain inclusions of abelian groups
\begin{equation}\label{e:two-inclusions}
\Sha^2(M)\subset  \ker\vk_*\times \{0\}\subset \cF^2(M).
\end{equation}

By Lemma \ref{l:ker-alpha} the homomorphism
\[\iota_*\colon \big(M[V_f(E)]_0\big)_\G\otimes \Q/\Z \to \big(M[V_E]_0\big)_\G\otimes \Q/\Z\]
is injective with image $\im\iota_*=\ker\vk_*$.
Hence the group
\[\ker\vk_*\times\{0\}=\im\iota_*\times\{0\}\cong\big(M[V_f(E)]_0\big)_\G\otimes \Q/\Z\] is divisible,
and by applying Corollary \ref{c:div-not-direct} to the inclusions \eqref{e:two-inclusions},
we conclude that the subgroup $\Sha^2(M)$, if nontrivial, is not a direct summand of $\cF^2(M)$.
The theorem follows.
\end{proof}

We give examples of $F$-tori $T$ with $\Sha^2(F,T)\neq 0$.

\begin{lemma}[{Sansuc \cite[Remark 1.9.4]{Sansuc81}}\hs]
\label{l:Sansuc-Sha2}
Let $E/F$ be a finite Galois extension of global fields of degree $n$ with Galois group $\G$.
Consider the $(n-1)$-dimensional torus
\[ T=(R_{E/F}\GG_m)/\GG_{m,F}\hs.\]
Then $\Sha^2(F,T)$ is a cyclic group of order $n/l$ where $l=\lcm(n_v)$
is the least common multiple of the orders $n_v=|\G\sub \bv|$ of the decomposition groups $\G\sub \bv$
where $v$ runs over the set $V_F$ of places of $F$.
\end{lemma}

\begin{proof}
Sansuc states the lemma only in the case of number fields
and gives no proof, so we prove the lemma here.
Write $X=X^*(T)$ (the character group of $T$).
By the global Poitou--Tate duality we have a canonical isomorphism
\[ \Sha^2(F,T)\isoto \Hom(\Sha^1(F^s\hm/F,X),\Q/\Z);\]
see \cite[Theorem (8.6.9)]{NSW08}.
Since $\G(F^s/F)$ is profinite and $X$ is torsion-free,
we have  reductions $H^1(F^s/F, X)=H^1(E/F,X)$ and  $\Sha^1(F^s\hm/F, X)=\Sha^1(E/F,X)$;
see \cite[Lemma 1.9]{Sansuc81}.
Therefore, it remains to compute $\Sha^1(\G,X)$.

The $\G$-module  $X$ fits into the exact sequence
\[ 0\to X\to \Ind_\G\Z\labelto{N} \Z\to 0\]
where $N\big(\sum_{\gamma\in \G}a_\gamma\cdot\gamma\big)=\sum_{\gamma\in\G} a_\gamma$.
This exact sequence gives rise to a Tate cohomology exact sequence
\[0= H^0(\G,\Ind_\G X)\to H^0(\G,\Z)\to H^1(\G, X)\to H^1(\G,\Ind_\G X)=0\]
whence we obtain an isomorphism
\[ H^1(\G,X)\cong H^0(\G,\Z)=\Z/n\Z\quad\ \text{where}\ \,n=|\G|.\]

Let $v\in V_F$, and let $\bv\in V_E$ be a place of $E$ over $v$.
As above, we obtain $H^1(\Gv,X)\cong \Z/n_v\Z$ where $n_v=|\Gv|$,
and we have a commutative diagram
\[
\xymatrix{
\Z/n\Z\ar[r]^-\sim\ar[d]_-{\res^0_v} &H^1(\G,X)\ar[d]^-{\res^1_v}\\
\Z/n_v\Z\ar[r]^-\sim                  &H^1(\Gv,X)
}
\]
where the map
\[\res_v^0\colon \Z/n\Z=H^0(\G,\Z)\to H^0(\Gv,\Z)=\Z/n_v\Z\]
is surjective, because by a formula in Appendix \ref{app:cheat-sheet}
this map is induced by the identity map $\id\colon \Z\to\Z$.
We see that $\ker (\res^1_v)$ is the subgroup of order $m_v\coloneqq n/n_v$
of the cyclic group $H^1(\G,X)\cong\Z/n\Z$.
It follows that $\Sha^1(\G,X)=\bigcap_v\ker (\res^1_v)$
is the subgroup in $H^1(\G,X)\cong\Z/n\Z$
of order $\gcd(m_v)=n/\lcm( n_v)$.
The lemma follows.
\end{proof}

\begin{lemma}[well-known]
\label{l:-13-17}
Let $F=\Q$ and  $E=\Q(\sqrt{13},\sqrt{17})$.
Then all decomposition groups for $E/F$ are cyclic.
\end{lemma}

\begin{proof}
The field extension $E/F$ is unramified outside $\{\infty, 2,13,17\}$,
 and so for $v\notin \{\infty, 2,13,17\}$
the decomposition group $\G(v)$ is cyclic,
where by abuse of notation we write $\G(v)$ for $\Gv$.
The group $\G(\infty)=1$ is clearly cyclic.
Since $17\equiv 1\pmod 8$, we can identify $\Q_2(\sqrt{13},\sqrt{17})=\Q_2(\sqrt{13})$
and  therefore $\G(2)$ is cyclic.
Clearly, we have $\big(\frac{17}{13}\big)=1$,
whence $\Q_{13}(\sqrt{13},\sqrt{17})=\Q_{13}(\sqrt{13})$,
and therefore the decomposition group $\G(13)$ is cyclic.
By the quadratic reciprocity law we have $\big(\frac{13}{17}\big)=\big(\frac{17}{13}\big)=1$,
whence $\Q_{17}(\sqrt{13},\sqrt{17})=\Q_{17}(\sqrt{17})$,
and therefore the decomposition group $\G(17)$ is cyclic.
\end{proof}

\begin{example}Following Sansuc \cite[Remark 1.9.4]{Sansuc81},
we take $F=\Q$ and  $E=\Q(\sqrt{13},\sqrt{17})$.
By Lemma \ref{l:-13-17} all decomposition groups for $E/F$ are cyclic, that is, of order 1 or 2.
We consider the 3-dimensional torus $T=(R_{E/F}\GG_m)/\GG_{m,F}$\hs.
By Lemma \ref{l:Sansuc-Sha2} we have $\Sha^2(\Q,T)\simeq\Z/2\Z\neq 0$.
By Theorem \ref{t:not-direct} the subgroup $\Sha^2(\Q,T)\subset H^2(F,T)$ is not a direct summand.
\end{example}

\begin{example}
Following Sawin \cite{Sawin}, we take
$F=\FF_q(t)$, the field of rational functions in one variable $t$ over a finite field $\FF_q$.
We assume that $q\equiv 1\pmod{4}$, for instance, $q=5$ or $q=9$,
and we take $E=\FF_q\big(\sqrt{t},\sqrt{t^2-1}\,\big)$.
Then by  Lemma \ref{l:Sawin} below, all decomposition groups for $E/F$ are cyclic.
Again we consider the 3-dimensional torus $T=(R_{E/F}\GG_m)/\GG_{m,F}$;
then by Lemma \ref{l:Sansuc-Sha2} we have $\Sha^2(F,T)\simeq\Z/2\Z$, and
by Theorem \ref{t:not-direct} the subgroup $\Sha^2(F,T)\subset H^2(F,T)$ is not a direct summand.
\end{example}

\begin{lemma}[Sawin \cite{Sawin}]
\label{l:Sawin}
Let $F =\FF_q(t)$, $E= \FF_q ( \sqrt{t}, \sqrt{t^2-1} ) $ where  $q\equiv 1 \bmod 4$.
Then all decomposition groups for $E/F$ are cyclic.
\end{lemma}

\begin{proof}
It suffices to check that at each place where one of the two extensions
$\FF_q(\sqrt{t})/\FF(t)$ and $\FF_q(t,\sqrt{t^2-1})/\FF(t)$
ramifies, the other is split,
as this clearly gives a cyclic decomposition group, and the unramified places are always cyclic.

The first extension ramifies at $0, \infty$.
At $t=0$, the second extension $y^2=t^2-1 $  locally looks like $y^2 = 0-1 = -1$,
which is split since $q \equiv 1 \bmod 4$.
 We can write  the second extension as $\left(\frac{y}{t}\right)^2= 1- \frac{1}{t^2}$,
which at $t=\infty$ locally looks like $\left(\frac{y}{t}\right)^2 = 1- \frac{1}{\infty^2}  =1$, which is split.

The second extension ramifies at $1,-1$ where the first extension locally looks respectively
like $y^2=1$ and $y^2=-1$, and these extensions again are split since $q\equiv 1\bmod 4$.
\end{proof}

\appendix

\chapter{A long exact sequence containing $B_\Gt$}
\label{app:Hinich}

\centerline{ by Vladimir Hinich}
\bigskip

In this appendix, for a {\em finite} group $\G$ and a short exact sequence of $\G$-modules \eqref{e:SG}
we construct a long exact sequence in group homology \eqref{e:LG} of Theorem \ref{t:Hinich}.
We compute the connecting homomorphisms and investigate the functoriality in $\G$.

\section{Exact sequence}

\begin{theorem}\label{t:Hinich}
A finite group $\G$ and a short exact sequence of $\G$-modules
\begin{equation}\label{e:SG}
0\to B'\overset \iota\lra B\overset \varkappa\lra B''\to 0
\end{equation}
give rise to a long exact sequence of homology
\begin{multline}\label{e:LG}\tag{${\rm L}_\G$}
\dots \overset{\delta_2}\lra H_1(\G,B')\li H_1(\G,B) \lj H_1(\G,B'')\overset{\delta_1}\lra\\
   (B')_\Gt \li B_\Gt \lj (B'')_\Gt\overset{\delta_0}\lra\\
         (B'\otimes\Q/\Z)_\G \li (B\otimes\Q/\Z)_\G \lj (B''\otimes\Q/\Z)_\G\to 0
\end{multline}
(where $H_m$ denotes the $m$-th homology group)
depending functorially on $\G$ and on the short exact sequence \eqref{e:SG}.
\end{theorem}

\begin{proof}
The functor  from  the category of $\G$-modules to the category of abelian groups
\[ B\rsa \Q/\Z\otimes_\Z B_\G\]
is the same as
\[ B\rsa\Q/\Z\otimes_\Lam\! B\]
where $\Lam=\Z[\G]$ is the group ring of $\G$.
From the short exact sequence of $\G$-modules  \eqref{e:SG},
we obtain a long exact sequence
\begin{multline}\label{e:LG-Tor}
\dots\Tor_2^\Lam(\Q/\Z,B') \li \Tor_2^\Lam(\Q/\Z,B) \lj \Tor_2^\Lam(\Q/\Z,B'')\labelto{\delta_2^{\Tor}}\\
 \qquad\quad\Tor_1^\Lam(\Q/\Z,B') \li \Tor_1^\Lam(\Q/\Z,B) \lj \Tor_1^\Lam(\Q/\Z,B''))\labelto{\delta_1^{\Tor}}\\
\Q/\Z\otimes_\Lam\! B'\li \Q/\Z\otimes_\Lam\! B\lj \Q/\Z\otimes_\Lam\! B''\to 0
\end{multline}
depending functorially on $\G$ and on the short exact sequence  \eqref{e:SG}.
Now Theorem \ref{t:Hinich} follows from the next proposition.
The connecting homomorphisms $\delta_m$ in  \eqref{e:LG} will be computed in \S\ref{ss:Hinich-2}.
\end{proof}

\begin{proposition}\label{p:Hinich-iso}
For a finite group $\G$, write $\Lam=\Z[\G]$. Then for any $\G$-module $B$, there are  functorial isomorphisms
\[ \phi_1\colon \Tor_1^\Lam (\Q/\Z,B)\isoto B_\Gt\,,\quad\
\phi_m\colon\Tor_m^\Lam(\Q/\Z, B)\isoto H_{m-1}(\G,B)\,\text{ for }\,m\ge 2.\]
\end{proposition}

We need a lemma.

\begin{lemma}\label{l:Tor-0}
For a finite group $\G$ and any $\G$-module $B$, we have
$\Tor_m^\Lam (\Q,B)=0$ for all $m\ge 1$  and
$\Tor_0^\Lam(\Q,B)=\Q\otimes_\Z B_\G$.
\end{lemma}

\begin{proof}
Let
\[ P_\bullet:\quad\dots\to P_2\to P_1\to P_0\to\Z\to 0\]
be the standard complex, which is a $\Lam$-free resolution of the trivial $\G$-module $\Z$;
see \cite[Chapter IV, \S2]{CasFro86}.
Recall that $P_m=\Z[\G^{m+1}]$.
We consider the {\em Bar complex} $\BarC(B)=P_\bullet \otimes_\Lambda B$,
also known as the {\em complex of  homogeneous chains.}
Tensoring with $\Q$ over $\Z$, we obtain a flat resolution of $\Q$
\[ \dots\to \Q\otimes_\Z P_2\to \Q\otimes_\Z P_1\to \Q\otimes_\Z P_0\to\Q\to 0.\]
Tensoring this resolution with the $\Gamma$-module  $B$ over $\Lam=\Z[\G]$,
we obtain the complex $(\Q\otimes_\Z P_\bullet)\otimes_\Lam B\,$:
\begin{equation}\label{e:QPB}
\dots (\Q\otimes_\Z P_2)\otimes_\Lam B\to (\Q\otimes_\Z P_1)\otimes_\Lam B
    \to (\Q\otimes_\Z P_0)\otimes_\Lam B\to\Q\otimes_\Lam B\to 0.
\end{equation}
By definition, $\Tor_m^\Lam(\Q,B)$ is the $m$-th homology group of this complex.

However, we can obtain the complex \eqref{e:QPB} from $P_\bullet$
by tensoring first with $B$ over $\Lam$,
and after that with $\Q$ over $\Z$:
\[ \Q\otimes_\Z\big(P_\bullet\otimes _\Lam B\big)\,\cong\, (\Q\otimes_\Z P_\bullet)\otimes_\Lam B.\]
Since $\Q$ is a flat $\Z$-module, we obtain canonical isomorphisms
\[ \Tor_m^\Lam(\Q,B)\cong \Q\otimes_\Z \Tor_m^\Lam(\Z,B)=\Q\otimes_\Z H_m(\G,B).\]

Now, since the group $\G$ is finite, the abelian group $H_m(\G,B)$ for $m\ge 1$
is killed by multiplication by $\#\G$;
see, for instance, \cite[Chapter IV,  \S6, Corollary 1 of Proposition 8]{CasFro86}.
It follows that $\Q\otimes_\Z H_m(\G,B)=0$.
Thus $\Tor_m^\Lam (\Q,B)=0$, as required. Since $H_0(\G,B)=B_\G$,
we deduce that $\Tor_0^\Lam(\Q,B)=\Q\otimes_\Z B_\G$.
\end{proof}

\begin{proof}[Proof of Proposition \ref{p:Hinich-iso}]
We  calculate the graded group $\Tor^\Lambda_\bullet(\Z,B)=H_\bullet(\Gamma,B)$ as the homology of
the complex $\BarC(B)$ of homogeneous chains.
We calculate $\Tor^\Lambda_\bullet(\Q,B)$ as the homology of
$\Q\otimes\BarC(B)$ (the tensor product taken over $\Z$). Moreover,
$\Tor^\Lambda_\bullet(\Q/\Z,B)$ is the homology of the complex
$\Q/\Z\otimes\BarC(B)$.
We consider the  $\Z$-flat resolution
$R_\bullet=(R_1\to R_0)$  of $\Q/\Z$ defined by the formulas
\[  R_1=\Z,\ R_0=\Q \]
with the differential given by the standard embedding $\Z\into\Q$.
We denote by $u$ the standard generator of $R_1$.
Then we can compute  $\Tor^\Lambda_\bullet(\Q/\Z,B)$
as the homology of the tensor product of complexes $R_\bullet\otimes\BarC(B)$,
with differential given by the standard formula
\[d\colon\,\hs u\otimes x+a\otimes y\,\hs\longmapsto\,\hs 1\otimes x-u\otimes dx+a\otimes dy\]
where $x\in (\BarC\, B)_{m-1}$, $y\in (\BarC\, B)_{m}$, and  $a\in\Q$.

We have an exact triangle
\begin{equation}\label{e:triangle}
\begin{aligned}
\xymatrix{
& & R_\bullet\otimes\BarC(B) \ar_{-1}^\pi[dl] & \\
& \BarC(B) \ar[rr] & & \Q\otimes\BarC(B)\ar^\iota[ul]
}
\end{aligned}
\end{equation}
where the morphism of complexes $\iota$ is induced by the morphism  $(0\to \Q)\into (\Z\to \Q)=R_\bullet$,
and where $\pi$ is induced by the degree $-1$ map $R_\bullet\to\Z$ sending $R_0=\Q$ to 0 and
sending $u\in R_1$ to $1\in\Z$.
Here by Lemma \ref{l:Tor-0} the homology $H_m(\Q\otimes\BarC(B))$ vanishes for $m\ge 1$
and
$H_0(\Q\otimes\BarC(B))=\Q\otimes B_\Gamma$.

The exact triangle \eqref{e:triangle} gives rise to a long exact sequence of homology
\begin{multline*}
\to\Tor^\Lambda_{m}(\Q,B)\to\Tor^\Lambda_m(\Q/\Z,B)\labeltooo{H_m(\pi)}
H_{m-1}(\Gamma,B)\to\Tor^\Lambda_{m-1}(\Q,B)\to\dots\\
\dots\to\Tor_1^\Lambda(\Q,B)\to\Tor_1^\Lambda(\Q/\Z,B)\to B_\G\to\Q\otimes B_\G
\end{multline*}
which establishes the isomorphisms
 $H_m(\pi):\Tor^\Lambda_m(\Q/\Z,B)\isoto H_{m-1}(\Gamma,B)$ for $m>1$
and the isomorphism $H_1(\pi):\Tor_1^\Lambda(\Q/\Z,B)\isoto B_\Gt$\hs.
This completes the proofs of Proposition \ref{p:Hinich-iso}
and Theorem \ref{t:Hinich}.
\end{proof}

\begin{lemma}
\label{lem:viator}
The map $H_m(\pi)$ sends the element of $\Tor^\Lambda_m(\Q/\Z,B)$
represented by an $m$-cycle $u\otimes x+a\otimes y$
where \,$x\in(\BarC\hs B)_{m-1}$\hs, \,$y\in(\BarC\hs B)_m$\hs, $a\in \Q=R_0$\hs,  \hs to the element
of $H_{m-1}(\Gamma,B)$ represented by $x$.
\end{lemma}

\begin{proof}
Obvious because $\pi(u\otimes x+a\otimes y)=x$.
\end{proof}

\section{Connecting homomorphisms}
\label{ss:Hinich-2}

We compare the connecting homomorphisms $\delta_i, i\geq 1$ for the
functors $H_\bullet(\Gamma,-)$ in the
exact sequence \eqref{e:LG} of Theorem \ref{t:Hinich}, with the connecting homomorphisms for the functor
$\Tor_\bullet^\Lambda(\Q/\Z,-)$ in  \eqref{e:LG-Tor}. We also compute the
connecting homomorphism $\delta_0$ in the exact sequence~\eqref{e:LG}.

\begin{proposition}\label{p:delta}
For $m\ge 1$, the following diagram anti-commutes:
\begin{align*}
\xymatrix@C=11mm{
\Tor_{m+1}^\Lambda(\Q/\Z, B'')\ar[r]^-{\delta_{m+1}^{\Tor}}\ar[d]_{\psi''_{m+1}}   &\Tor_{m}^\Lambda(\Q/\Z, B')\ar[d]^{\psi'_{m}} \\
H_{m}(\G, B'')\ar[r]^-{\delta_{m}}                                       &H_{m-1}(\G, B')
}
\end{align*}
that is,
 \[(\psi'_{m}\circ \delta_{m+1}^{\Tor})(t)=-(\delta_{m}\circ \psi''_{m+1})(t)\quad\ \text{for all}\ \, t\in \Tor_{m+1}^\Lambda(\Q/\Z, B'').\]
In this diagram, the bottom horizontal arrow is the connecting map coming from  the short exact sequence \eqref{e:SG},
and the top horizontal arrow is the connecting map from the exact sequence \eqref{e:LG-Tor}.
Moreover, for $m\ge 1$ the vertical arrows
\[\psi_{m+1}''\colon \Tor_{m+1}^\Lambda(\Q/\Z, B'')\to H_{m}(\G, B''),\quad\
   \psi_{m+1}'\colon \Tor_{m+1}^\Lambda(\Q/\Z, B')\to H_{m}(\G, B')\]
are the corresponding homomorphisms $\phi_{m+1}$ for $B''$ and $B'$
from Proposition \ref{p:Hinich-iso}, respectively,
and $\psi'_1$ is the composite map
\begin{equation}\label{l:psi-phi}
\psi'_1\colon \Tor_1^\Lam (\Q/\Z,B')\labelto{\phi'_1} (B')_\Gt\into B'_\G=H_0(\G,B').
\end{equation}
\end{proposition}

\begin{proof}
To obtain explicit formulas for the connecting homomorphisms, we use the connecting homomorphisms defined
by the short exact sequence of complexes
\[
0\to R_\bullet\otimes\BarC'\to R_\bullet\otimes\BarC\to R_\bullet\otimes\BarC''\to 0,
\]
where $\BarC':=\BarC(B')$, $\BarC:=\BarC(B)$, and $\BarC'':=\BarC(B'')$.

Let  $u\otimes x''+\frac{1}{n}\otimes y''$, $n\in\Z$, be an $(m+1)$-cycle
in $R_\bullet\otimes\BarC''$.
Here $x''\in \BarC''_m$ and $y''\in\BarC''_{m+1}$\hs.
 We have
\[ 0=d\big(u\otimes x''+\frac{1}{n}\otimes y''\big)=1\otimes x''-u\otimes dx''+\frac{1}{n}\otimes dy''=
-u\otimes dx''+\frac{1}{n}\otimes(nx''+dy'').
\]
It follows that  $dx''=0\in \BarC''_{m-1}$, that is, $x''$ is an $m$-cycle.

We write $[z]$ for the homology class of a cycle $z$.
We wish to calculate a representative for
$\delta_{m+1}^{\Tor}[u\otimes x''+\frac1n\otimes y'']$,
where, to simplify the notation,
\[\text{we write}\quad\delta_{m+1}^{\Tor}\big[u\otimes x''+\frac1n\otimes y''\big]
\quad\text{for}\quad
\delta_{m+1}^{\Tor}\big(\hs\big[u\otimes x''+\frac1n\otimes y''\big]\hs\big).\]
We lift  $x'',y''\in\BarC''$ to $x,y\in\BarC$,
so that the $(m+1)$-cycle $u\otimes x''+\frac{1}{n}\otimes y''$
in $R_\bullet\otimes\BarC''$ lifts
to an $(m+1)$-chain $u\otimes x+\frac{1}{n}\otimes y$ in $R_\bullet\otimes\BarC$.
We calculate the differential $-u\otimes dx+\frac{1}{n}\otimes(nx+dy)$ of this chain,
and routinely deduce that this element is an image
of some $m$-cycle  $u\otimes x'+\frac{1}{n'}\otimes y'$ in $R_\bullet\otimes\BarC'$.
Then
\[ \Big[u\otimes x'+\frac{1}{n'}\otimes y\Big]
       =\delta_{m+1}^{\Tor}\Big[u\otimes x''+\frac{1}{n}\otimes y''\Big]
       =\Big[-u\otimes dx+\frac{1}{n}\otimes(nx+dy)\Big],\]
whence $x'=-dx$.
The $m$-cycle $x''\in\BarC''$
represents a homology class $[x'']$ in $H_m(\Gamma,B'')$, and the image of $[x'']$
under the connecting homomorphism
\[\delta_m\colon H_{m}(\G, B'')\to H_{m-1}(\G, B')\]
 is  $[dx]$. Since $dx=-x'$,
we see that the diagram in the  proposition  indeed anti-commutes.

Since $H_1(\G,B)$ is killed by $\#\G$
(see \cite[Chapter IV,  \S6, Corollary 1 of Proposition 8]{CasFro86}),
the image of the connecting homomorphism
\[ \delta_1\colon H_1(\G,B'')\to H_{0}(\G,B')\]
consists of torsion elements
(note that the group $H_0(\G,B')=B'_\G$ might not be torsion).
Thus the homomorphism $\psi'_1$ indeed factors as in \eqref{l:psi-phi}.
This completes the proof of Proposition \ref{p:delta}.
\end{proof}

We describe the connecting homomorphism  $\delta_0$ in the sequence \eqref{e:LG} of Theorem \ref{t:Hinich}.
Let $x''\in B''$ be such that the image $(x'')_\G$ of $x''$
in $(B'')_\G$ is contained in  $(B'')_\Gt$\hs.
This means that there exist $n\in\Z_{>0}$ and elements $y''_{\g}\in B''$ for all $\g\in\G$
such that
\[ nx''=\sum_{\g\in\G}\big(\upgam y''_{\g}-y''_{\g}\big).\]
We lift $x''$ to some $x\in B$\hs, we lift each $y''_{\g}$ to some $y_{\g}\in B$\hs,
and we consider the element
\begin{equation*}
z\coloneqq nx-\sum_{\g\in\G}\big(\upgam y_{\g}-y_{\g}\big)\in B.
\end{equation*}
Then $\vk(z)=0\in B''$\hs, whence $z=\iota(z')$ for some  $z'\in B'$.
We consider the image $(z')_\Gtf$ of $z'\in B'$ in $(B')_\Gtf$ where $(B')_\Gtf\coloneqq B_\G/B_\Gt$\hs,  and we put
\begin{equation*}
\delta_0\big(\hs(x'')_\G\big)= \frac1n\otimes (z')_\Gtf \,
    \in\, \Q/\Z\otimes_\Z (B')_\Gtf=\Q/\Z\otimes_\Z (B')_\G
\end{equation*}
where we write $\frac1n$ for the image in $\Q/\Z$ of $\frac1n\in \Q$.

\begin{proposition} \label{p:delta0}
The following diagram commutes:
\begin{equation*}
\xymatrix@C=11mm{
\Tor_1^\Lambda(\Q/\Z, B'')\ar[r]^-{\delta_1^{\Tor}}\ar[d]_{\psi''_1}  &\Q/\Z\otimes_\Lambda B'\ar@{=}[d] \\
(B'')_\Gt\ar[r]^-{\delta_0}                                   &(\Q/\Z\otimes_\Z B')_\G
}
\end{equation*}
In this diagram,
the top horizontal arrow is the connecting map from the exact sequence \eqref{e:LG-Tor},
the map $\psi''_1$ is as $\phi_1$ in \ref{p:Hinich-iso}, and the map $\delta_0$ is defined above.
\end{proposition}

\begin{proof}

Choose $x''\in B''$ representing a class in $(B'')_\Gt$.
Then we have $nx''=\sum _{\gamma\in\Gamma}(\upgam y''_\gamma-y''_\gamma)$
for some integer $n\neq 0$ and some elements $y''_\g\in B''$ with $\g$ running over $\G$.
Consider the $1$-chain
\[
z'':=u\otimes x''-\frac{1}{n}\otimes\sum \gamma^{-1}\otimes
1\otimes_\Lambda y''_\gamma
\]
in $(R_\bullet\otimes \BarC'')_1=
R_1\otimes\BarC''_0\oplus R_0\otimes\BarC''_1$.
We have
\[dz''=1\otimes x''-u\otimes dx''-\frac1n \otimes\sum _{\gamma\in\Gamma}(\upgam y''_\gamma-y''_\gamma)=0\]
so that $z''$ is a $1$-cycle.
By Lemma \ref{lem:viator}, the image in $H_0(\G,B'')=B''_\G$ of the homology class
$[z'']\in\Tor_1^\Lambda(\Q/\Z,B'')$
under $H_1(\pi)$ is the class of $x''$.

To apply $\delta_1^{\Tor}$ to the class of $z''$, we lift $z''$ to
$(R_\bullet\otimes\BarC)_1$, we lift $x'', y''_\gamma\in B''$  to some $x, y_\gamma\in B$
and obtain a 1-chain
\[
z:=u\otimes x-\frac{1}{n}\otimes\sum \gamma^{-1}\otimes 1\otimes_\Lambda y_\gamma
\]
in $R_\bullet\otimes\BarC$.
Its differential $dz=1\otimes x-\frac{1}{n}\otimes\sum(\upgam y_\gamma-y_\gamma)$
is the image of an element $x'$ of $(R_\bullet\otimes\BarC')_0=\Q\otimes B'$,
which represents a class $[x'] \in(\Q/\Z\otimes B')_\G$.
Then $\delta_1^{\Tor}\big((\psi_1'')^{-1}([x''])\big)=[x']$ and
$[x']=\delta_0[x'']$ by the description of $\delta_0$ presented
above. This proves the claim.
\end{proof}

\section{Change of groups}

\begin{proposition}\label{p:LG-LD}
For a finite group $\G$, a  short exact sequence of $\G$-modules
\begin{equation*}
0\to B'\overset \iota\lra B\overset \varkappa\lra B''\to 0
\end{equation*}
as in Theorem \ref{t:Hinich}, and a subgroup $\D$ of $\G$,
we have  morphisms of the homology long exact sequences
 of Theorem \ref{t:Hinich} for $\G$ and for $\D$
\[\Res\colon L_\G\to L_\D\quad\ \text{and}\quad\ \Cor\colon L_\D\to L_\G\hs.\]
\end{proposition}

We specify the restriction and corestriction  morphisms $\Res$ and $\Cor$.

For $i\ge 1$, the homomorphisms
\[ \res_i\colon H_i(\G,B)\to H_i(\D,B)\]
are the usual restriction maps of \S\ref{sub:rescor} and Appendix \ref{app:cheat-sheet}.
The same refers to the homomorphisms  $\res_i'\colon H_i(\G,B')\to H_i(\D,B')$  and  $\res_i''\colon H_i(\G,B'')\to H_i(\D,B'')$.

The homomorphisms
\[\res_0\colon  B_\Gt\to B_\Dt\quad\ \text{and}\quad\ \res_{-1}\colon \Q/\Z\otimes B_\G\to \Q/\Z\otimes B_\D\]
are induced by the transfer map $N_{\D\lmod\G}\colon B_\G\to B_\D$ of \S\ref{sub:explicit0-1}.
The same refers to the homomorphisms $\res_0'$, $\res_{-1}'$, $\res_0''$, $\res_{-1}''$.

For $i\ge 1$, the homomorphisms
\[ \Cor_i\colon H_i(\D,B)\to H_i(\G,B)\]
are the usual corestriction maps of \S\ref{sub:rescor} and Appendix \ref{app:cheat-sheet}.
The same refers to the homomorphisms  $\Cor_i'\colon H_i(\D,B')\to H_i(\G,B')$  and  $\Cor_i''\colon H_i(\D,B'')\to H_i(\G,B'')$.

The homomorphisms
\[\Cor_0\colon  B_\Dt\to B_\Gt\quad\ \text{and}\quad\ \Cor_{-1}\colon \Q/\Z\otimes B_\D\to \Q/\Z\otimes B_\G\]
are induced by the natural projection $\proj\colon B_\D\to B_\G$\hs.
The same refers to the homomorphisms $\Cor_0'$, $\Cor_{-1}'$, $\Cor_0''$, $\Cor_{-1}''$.

\begin{proof}[Proof of Proposition \ref{p:LG-LD}]
It suffices to check the commutativity of the diagrams
\begin{equation}\label{e:Change-i}
\begin{aligned}
\xymatrix@C=10mm{
H_i(\G,B'')\ar[r]^-{\delta_i}\ar[d]_-{\Res_i} & H_{i-1}(\G,B')\ar[d]^-{\Res_{i-1}}\\
H_i(\D,B'')\ar[r]^-{\delta_i}                 & H_{i-1}(\D,B')
}
\qquad\quad
\xymatrix@C=10mm{
H_i(\D,B'')\ar[r]^-{\delta_i}\ar[d]_{\Cor_i} & H_{i-1}(\D,B')\ar[d]^{\Cor_{i-1}}\\
H_i(\G,B'')\ar[r]^-{\delta_i}                & H_{i-1}(\G,B')
}
\end{aligned}
\end{equation}
for $i\ge 1$, and the commutativity of the diagrams
\begin{equation}\label{e:Change-0}
\begin{aligned}
\xymatrix@C=10mm{
(B'')_\Gt\ar[r]^-{\delta_0}\ar[d]_{N_{\D\lmod\G}} & (\Q/\Z\otimes B')_\G\ar[d]^{N_{\D\lmod\G}}\\
(B'')_\Dt\ar[r]^-{\delta_0}                 & (\Q/\Z\otimes B')_\D
}
\qquad\
\xymatrix@C=10mm{
(B'')_\Dt\ar[r]^-{\delta_0}\ar[d]_{\proj} & (\Q/\Z\otimes B')_\D\ar[d]^{\proj}\\
(B'')_\Gt\ar[r]^-{\delta_0}                & (\Q/\Z\otimes B')_\G
}
\end{aligned}
\end{equation}

The commutativity of the diagrams \eqref{e:Change-i} is well known.
The commutativity of the diagram at right in \eqref{e:Change-0} is obvious.
The commutativity of the diagram at left in \eqref{e:Change-0}
can be proved by a standard calculation using \cite[Proof of Lemma 9.91(ii)]{RotmanHA}.
\end{proof}

\chapter{$H^1$ for a crossed module}
\label{app:crossed}

In this appendix, following a request of an anonymous referee,
we recall the definition of the first group cohomology
with coefficients in an equivariant crossed module.
For details see \cite[\S3.3]{Brv98}.

A (left) {\em  crossed module} is a group homomorphism   $\alpha\colon H\to G$ together  with a
left action $G\times H\to H$ of $G$ on $H$ denoted $(g,h)\mapsto\hs ^g h$ such that
\begin{align*}
&h\hs h' h^{-1}=\hs^{\alpha(h)}\hm h',\\
&\alpha(\hs^g\hm h)=g\cdot \alpha(h)\cdot g^{-1}.
\end{align*}
for all $h,h'\in H$, $g\in G$.
We regard a crossed module $H\to G$ as a complex
(of not necessarily abelian groups)  with $H$ in degree $-1$ and $G$ in degree 0.
We say that a profinite group $\G$  acts on a crossed module $H\to G$ if $\G$  acts on $H$ and $G$ so that
\[\alpha(\hs^\sigma\hm h)=\hs^\sigma\hm (\alpha(h)),
    \quad\ ^\sigma\hm (\hs^g\hm  h)=\hs^{^\sigma\!\hm  g}(\hs^\sigma\hmm h)\]
for all $h\in H$, $g\in G$, $\sigma\in\G$.
Then we say that $H\to G$ is a {\em $\G$-equivariant crossed module}.

Let $H\to G$ be a $\G$-equivariant crossed module.
We define the set of 0-(hyper)\-cochains
\[ C^0=C^0(\G, H\to G) =\Maps(\G,H)\times G\]
where $\Maps(\G,H)$ denotes the set of continuous (that is, locally constant) maps
from the profinite group $\G$ to the group $H$.
We define a group structure on $C^0$ as follows.
Let $(\vphi_1,g_1), (\vphi_2,g_2)\in C^0$.
We set
\[(\vphi_1,g_1)\cdot (\vphi_2,g_2)=(\vphi', g_1g_2)\quad\ \text{where}\ \, \vphi'(\sigma)=\hs^{g_1}\hm\vphi_2(\sigma)\cdot\vphi_1(\sigma).\]

Let  $H\to G$ be a $\G$-equivariant crossed module. We define the 1-(hyper)\-co\-homology set
$H^1(\G, H\to G)$ in terms of cocycles.

 We define the set of 1-(hyper)cocycles $Z^1=Z^1(\G,H\to G)$ to be the set of pairs
 $(u,\psi)$,  $u\in \Maps(\G\times \G, H)$ , $\psi\in \Maps(\G,G)$, such that
\begin{align*}
&\psi(\sigma\tau)=\alpha\big(u(\sigma,\tau)\big)\cdot\psi(\sigma)\cdot\hs^\sigma\hm\psi(\tau),\\
&u(\sigma,\tau\ups)\cdot\hs^{\psi(\sigma)\hs\sigma}\hm u(\tau,\ups)=u(\sigma\tau,\ups)\cdot u(\sigma,\tau),
\end{align*}
for all $\sigma,\tau,\ups\in\G$.

We define a right action $*$ of the group $C^0$ on the set of 1-cocycles $Z^1$ as follows.
For   $(u,\psi)\in Z^1$ and $(a,g)\in C^0$,
we set
\[ (u,\psi)*(a,g) = (u',\psi')\]
where
\begin{align*}
& \psi'(\sigma)=g^{-1}\cdot\alpha(a(\sigma))\cdot\psi(\sigma)\cdot \hs^\sigma\! g,\\
& u'(\sigma,\tau)=\hs^{g^{-1}}\!\big(a(\sigma\tau)\cdot u(\sigma,\tau)\cdot\hs^{\psi(\sigma)\hs\sigma}\! a(\tau)^{-1}\cdot a(\sigma)^{-1}\big)
\end{align*}
One can check that $(u',\psi')\in Z^1$.
Now we define the first cohomology set
\[H^1(\G,H\to G)=Z^1/C_0\hs,\]
to be the set of orbit of the group $C^0$ in the set $Z^1$.

\chapter{Duality}
\label{app:duality}

The lemmas in this appendix are well-known. Since we do not have references, we provide short proofs.

\section{Duality and group action}

\begin{lemma}
\label{l:A^*}
Let $\G$ be a group and $A$ be a finite abelian group with $\G$-action.
Write $A^*=\Hom(A,\Q/\Z)$. Then
\begin{enumerate}
\item[\rm(1)] $(A^*)^\G\cong (A_\G)^*$,
\item[\rm(2)] $(A^\G)^*\cong (A^*)_\G$\hs.
\end{enumerate}
\end{lemma}

\begin{proof}
The object $A^\Gamma$ and the homomorphism $A^\Gamma \to A$ are characterized by the universal property
that any $\Gamma$-homomorphism $X \to A$ from a trivial $\Gamma$-module $X$ factors uniquely through $A^\Gamma$.
The dual property characterizes the homomorphism $A \to A_\Gamma$.
Since $A \mapsto A^*$ is a contravariant self-equivalence on the category of finite abelian groups
(as follows from the classification theorem of such groups, or from Pontryagin duality), the claim follows.
\end{proof}

\section{Duality via tori and lattices}

\begin{lemma}
\label{l:M-via-K}
Let $F$ be a field, and let $T^{-1}\to T^0$ be
a {\emm surjective} homomorphism of $F$-tori
with finite kernel $\KK$ (not necessarily smooth).
Let $X^*(\KK)\coloneqq \Hom(\KK_\Fbar\hs, \GG_{{\rm m},\Fbar})$ denote the character group of $\KK$.
Set $M=\coker\big[X_*(T^{-1})\to X_*(T^0)\big]$.
Then there is a functorial $\G(F^s\hm/F)$-equivariant isomorphism
of finite abelian groups
\[ M\cong \Hom(X^*(\KK), \Q/\Z).\]
\end{lemma}

\begin{proof}
The short exact sequence
\[ 1\to \KK\to T^{-1}\to T^0\to 1\]
induces a short exact sequence of character groups
\[0\to X^*(T^0)\to X^*(T^{-1})\to X^*(\KK)\to 0.\]
Applying the functor $\Hom(\,\cdot\,,\Z)$ to this short exact sequence,
we obtain an exact sequence
\begin{multline}\label{e:Ext1}
\Hom(X^*(T^{-1}),\Z) \to \Hom(X^*(T^0),\Z)\\
       \to \Ext^1_\Z(X^*(\KK),\Z) \to\Ext^1_\Z(X^*(T^{-1}),\Z)
\end{multline}
where $\Ext^1_\Z$ denotes the functor $\Ext^1$ on the category of $\Z$-modules (abelian groups).
For a finitely generated abelian group $A$, there is a canonical isomorphism
\begin{equation}\label{e:Ext-Hom}
\Ext^1_\Z(A,\Z)\cong \Hom(A_\Tors,\Q/\Z)
\end{equation}
(see, for instance, \cite[Lemma 2.1]{Borovoi23-JLT}\hs).
Thus from \eqref{e:Ext1} we obtain an exact sequence
\[X_*(T^{-1}) \to X_*(T^0) \to \Hom(X^*(\KK),\Q/\Z) \to 0,\]
whence we obtain a canonical isomorphism
\[ M=\coker\big[X_*(T^{-1}) \to X_*(T^0)\big]\isoto \Hom(X^*(\KK),\Q/\Z). \qedhere\]
\end{proof}

\begin{corollary}\label{c:ss-pi1}
Let $G$ be a {\em semisimple} group over a field $F$, and write $\KK=\ker[\rho\colon G_\ssc\to G]$.
Then there is a functorial $\G(F^s\hm/F)$-equivariant isomorphism
of finite abelian groups
\[ \pi_1(G)\cong\Hom\big(X^*(\KK), \Q/\Z\big).\]
\end{corollary}

\begin{proof}
With the notation of \S\ref{ss:fund-gp}
we have $\KK=\ker[T_\ssc \to T]$ and $\pi_1(G)=\coker[X_*(T_\ssc)\to X_*(T)]$.
Now the corollary follows from the lemma.
\end{proof}

\chapter{Fiber product}
\label{app:fiber}

Consider a diagram of sets and maps
\begin{equation}\label{e:fiber-diag}
\xymatrix@1{ S_1\ar[r]^-{f_1}  & B &S_2\ar[l]_{f_2} }
\end{equation}
and its fiber product
\[ S_1\underset{B,f_1,f_2}\times S_2=\{(s_1,s_2)\in S_1\times S_2\ |\  f_1(s_1)=f_2(s_2)\}. \]
Let $U$ be a set, and consider the map
\[f_1'\colon U\times S_1\to B,\quad (u,s_1)\mapsto f_1(s_1).\]

\begin{lem}[well-known]
\label{l:fiber-factor}
The canonical bijection
\[ U\times (S_1\times S_2)\isoto (U\times S_1)\times S_2\hs,\quad \big(u,(s_1,s_2)\big)\mapsto \big((u,s_1),s_2\big)\]
induces a bijection
\begin{equation}\label{e:product-fiber}
U\times (S_1\underset{B,f_1,f_2}\times S_2)\isoto  (U\times S_1)\underset{B,f_1',f_2}\times S_2\hs.
\end{equation}
\end{lem}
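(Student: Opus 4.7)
The plan is to verify that the canonical bijection
\[ \alpha\colon U \times (S_1 \times S_2) \isoto (U \times S_1) \times S_2,\qquad \big(u,(s_1,s_2)\big)\mapsto \big((u,s_1),s_2\big) \]
carries the subset $U\times(S_1\underset{B,f_1,f_2}\times S_2)$ bijectively onto the subset $(U\times S_1)\underset{B,f_1',f_2}\times S_2$. Since $\alpha$ is already a bijection of the ambient Cartesian products, it is enough to show that the defining condition of the fiber product on the left corresponds, under $\alpha$, to the defining condition on the right.

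First I would unpack both sides by definition. An element of $U\times(S_1\underset{B,f_1,f_2}\times S_2)$ is a triple $(u,s_1,s_2)$ with $f_1(s_1)=f_2(s_2)$, while an element of $(U\times S_1)\underset{B,f_1',f_2}\times S_2$ is a triple $\big((u,s_1),s_2\big)$ with $f_1'(u,s_1)=f_2(s_2)$. The key observation — and really the only content of the lemma — is that by the very definition of $f_1'$ we have $f_1'(u,s_1)=f_1(s_1)$, so the two conditions $f_1(s_1)=f_2(s_2)$ and $f_1'(u,s_1)=f_2(s_2)$ coincide, independent of $u\in U$.

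From this the bijection \eqref{e:product-fiber} is immediate: $\alpha$ restricts to a well-defined map from the left-hand fiber product to the right-hand one, and the inverse bijection $\alpha^{-1}$ restricts to the opposite direction. There is no real obstacle here; the only thing to be careful about is recording the fact that $f_1'$ factors through the projection $U\times S_1\to S_1$, which is exactly what makes the $U$-factor ``pass freely'' through the fiber product construction. Alternatively, one could give a one-line proof via the universal property of fiber products: both sides represent the functor on sets sending $T$ to the set of triples $(g\colon T\to U,\, h_1\colon T\to S_1,\, h_2\colon T\to S_2)$ with $f_1\circ h_1=f_2\circ h_2$, and hence are canonically isomorphic.
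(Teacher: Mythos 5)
Your proof is correct and takes essentially the same approach as the paper: both identify each side of \eqref{e:product-fiber} with the subset $\{(u,s_1,s_2)\in U\times S_1\times S_2 \mid f_1(s_1)=f_2(s_2)\}$, using that $f_1'(u,s_1)=f_1(s_1)$ makes the two fiber-product conditions coincide. The universal-property alternative you mention is a nice bonus but not needed.
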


\begin{proof}
Each of both sides of \eqref{e:product-fiber} equals \
\begin{multline*}
\big\{(u,s_1,s_2)\in U\times S_1\times S_2\ |\ f_1(s_1)=f_2(s_2)\big\}\\
\subseteq\hs U\times S_1\times S_2 = U\times (S_1\times S_2) = (U\times S_1)\times S_2.
\end{multline*}
\end{proof}

\begin{rmk}\label{r:fiber-groups}
If $U$ is a group and \eqref{e:fiber-diag} is a diagram of groups and homomorphisms,
then \eqref{e:product-fiber} is a group isomorphism.
\end{rmk}

\chapter{Cheat sheet on Tate cohomology}
\label{app:cheat-sheet}

Recall from \eqref{eq:biinf} in \S\ref{ss:St-res} the standard doubly-infinite resolution
\[
\xymatrix@C=5.8mm@R=2.5mm{
\dots\ar[r] &P_3\ar[r]^{\partial_3}&P_2 \ar[r]^{\partial_2} &P_1 \ar[r]^{\partial_1} &P_0 \ar[r]^{\partial_0} &P_{-1}\ar[r]^{\partial_{-1}} &P_{-2}\ar[r]^{\partial_{-2}} & P_{-3}\ar[r] &\dots\\
     &\Z[\Gamma^4]\ar@{=}[u]  &\Z[\Gamma^3]\ar@{=}[u] &\Z[\Gamma^2]\ar@{=}[u] &\Z[\Gamma]\ar@{=}[u] &\Z[\Gamma]\ar@{=}[u] &\Z[\Gamma^2]\ar@{=}[u] &\Z[\Gamma^3]\ar@{=}[u]
}
\]
For a $\Gamma$-module $A$ we obtain the homogeneous cochain complex for $A$
\[
\xymatrix@C=3.6mm@R=2.5mm{
\dots C^{-2}(\Gamma,A)\ar[r]^-{d^{-1}}  &C^{-1}(\Gamma,A)\ar[r]^-{d^{0}}  &C^{0}(\Gamma,A)\ar[r]^-{d^{1}}  &C^{1}(\Gamma,A)\ar[r]^-{d^2}&C^{2}(\Gamma,A)\dots \\
 \Maps_\Gamma(\Gamma^2,A)\ar@{=}[u] &\Maps_\Gamma(\Gamma,A)\ar@{=}[u] &\Maps_\Gamma(\Gamma,A)\ar@{=}[u] &\Maps_\Gamma(\Gamma^2,A)\ar@{=}[u]&\Maps_\Gamma(\Gamma^3,A)\ar@{=}[u]
}
\]
with differential $d^n f = f \circ \partial_n$, explicitly given for $n > 0$ by
\[ (d^n f)(g_0,\dots,g_n) = \sum_{i=0}^n (-1)^i f(g_0,\dots,\hat g_i,\dots,g_n), \]
for $n=0$ by
\[ (d^0 f)(g)=\sum_{h \in \Gamma}f(h), \]
and for $n<0$ by
\[ (d^n f)(g_0,\dots,g_{-n-1}) = \sum_{i=0}^{-n} (-1)^i \sum_{h \in \Gamma} f(g_0,\dots,g_{i-1},h,g_i,\dots,g_{-n-1}). \]
In low degrees these formulas read
\begin{align*}
(d^{-2}f)(a,b)&=\sum_{h\in\G}f(h,a,b)-f(a,h,b)+f(a,b,h)\\
(d^{-1}f)(a)&=\sum_{h\in\G} f(h,a) - f(a,h)\\
(d^0f)(a)&=\sum_{h\in\G} f(h)\\
(d^1f)(a,b)&=f(b)-f(a)\\
(d^2f)(a,b,c)&=f(b,c)-f(a,c)+f(a,b)\\
(d^3f)(a,b,c,d)&=f(b,c,d)-f(a,c,d)+f(a,b,d)-f(a,b,c).
\end{align*}

We can also use the inhomogeneous complex
\[
\xymatrix@C=6.8mm@R=2.5mm{
\dots C^{-2}(\Gamma,A)\ar[r]^-{d^{-1}}  &C^{-1}(\Gamma,A)\ar[r]^-{d^0}  &C^{0}(\Gamma,A)\ar[r]^-{d^1}  &C^{1}(\Gamma,A)\ar[r]^-{d^2}  &C^{2}(\Gamma,A)\dots\\
\Maps(\Gamma,A)\ar@{=}[u] &A\ar@{=}[u] &A\ar@{=}[u] &\Maps(\Gamma,A)\ar@{=}[u] &\Maps(\Gamma^2,A)\ar@{=}[u].
}
\]
For a homogeneous cochain $f\in C^n(\Gamma,A)$ with corresponding inhomogeneous cochain $c\in C^n(\Gamma,A)$, we have
\begin{align*}
c(g_1,\dots,g_n)&=f(1,g_1,g_1g_2,\dots,g_1\cdots g_n),\quad n\ge 0\\
c(g_1,\dots,g_{-n-1})&=f(1,g_1,g_1g_2,\dots,g_1\cdots g_{-n-1}),\quad n<0\\
f(g_0,\dots,g_n)&=g_0c(g_0^{-1}g_1,\hs g_1^{-1}g_2,\hs g_2^{-1}g_3,\hs\dots,\hs g_{n-1}^{-1}g_n)\quad\ n\ge0\\
f(g_0,\dots,g_{-n-1})&=g_0c(g_0^{-1}g_1,\hs g_1^{-1}g_2,\hs\dots,\hs g_{-n-2}^{-1}g_{-n-1})\quad\ n<0.
\end{align*}
In the inhomogeneous complex, the differentials become
\begin{eqnarray*}
(d^n c)(g_1,\dots,g_n)&=&g_1c(g_2,\dots,g_n)\\
&-&c(g_1\!\cdot\! g_2,g_3,\dots,g_n)\\
&+&c(g_1,g_2\!\cdot\! g_3,g_4,\dots,g_n)\\
&\dots&\!\!\!\!\dots\dots\dots\dots\dots\dots\dots\dots\\
&+&(-1)^{n-1}c(g_1,g_2,g_3,\dots,g_{n-1}\!\cdot\! g_n)\\
&+&(-1)^nc(g_1,g_2,g_3,\dots,g_{n-1})\\
\end{eqnarray*}
for $n>0$,
\[ d^0c = \sum_{h \in \Gamma} hc \]
for $n=0$, \,and
\begin{eqnarray*}
(d^n c)(g_1,\dots,g_{-n-1})=\sum_{h \in \Gamma}\Big(&&hc(h^{-1},g_1,\dots,g_{-n-1})\\
&-&c(h,h^{-1}\!\cdot\! g_1,g_2,\dots,g_{-n-1})\\
&+&c(g_1,h,h^{-1}\!\cdot\! g_2,g_3,\dots,g_{-n-1})\\
&.&\!\!\!\!\!\!\dots\dots\dots\dots\dots\dots\dots\dots\dots\dots\dots\\
&+&(-1)^{-n-1}c(g_1,g_2,\dots,g_{-n-2},h,h^{-1}\!\cdot\! g_{-n-1})\\
&+&(-1)^{-n}c(g_1,g_2,\dots,g_{-n-1},h)\Big)
\end{eqnarray*}
for $n<0$. See also \cite[Chapter VII,  \S3 and \S4]{SerLF}.

In low degrees these formulas read
\begin{align*}
&(d^{-2}c)(g) =\sum_{h \in \Gamma}hc(h^{-1},g)-c(h,h^{-1}g)+c(g,h)\\
&d^{-1}c=\sum_{h \in \Gamma} hc(h^{-1})-c(h)\\
&d^0c=\sum_{h \in \Gamma} hc\\
&(d^1c)(g)=gc-c\\
&(d^2c)(g,h)=gc(h)-c(gh)+c(g).
\end{align*}

In \S\ref{sub:rescor} we gave explicit formulas for restriction and corestriction in terms of homogeneous cochains.
Here we write the formulas in terms of inhomogeneous cochains.
Let $\Delta \subset \G$ be a subgroup and let $s\colon \Delta \lmod \G \to \G$ be a section of the natural projection.
The corresponding formulas in terms of inhomogeneous cochains are given as follows. For $c \in C^n(\Gamma,A)$,
the value of $\tx{res}^n(c)(h_1,\dots,h_n)$ for $n \geq 0$ is
\[ c(h_1,\dots,h_n)\]
and the value of $\tx{res}^n(c)(h_1,\dots,h_{-n-1})$ for $n < 0$ is
\[ \sum_{x \in (\Delta \lmod \Gamma)^{-n}}\hskip-4mm s(x_0)c\Big(s(x_0)^{-1}h_1s(x_1),\,s(x_1)^{-1}h_2s(x_2),\hs\dots,\hs s(x_{-n-2})^{-1}h_{-n-1}s(x_{-n-1})\Big).\]
For $c \in C^n(\Delta,A)$, the value of $\tx{cor}^n(c)(g_1,\dots,g_n)$ for $n \geq 0$ is
\[ \sum_{x \in \Delta \lmod \Gamma\hskip-2mm}\!\! s(x)^{-1}c\Big(\hmm s(x)g_1s(xg_1)^{-1}\!,s(xg_1)g_2s(xg_1g_2)^{-1}\!,\dots,s(xg_1\cdots g_{n-1})g_n s(xg_1 \cdots g_n)^{-1}\hmm\Big)\hm, \]
while the value of $\tx{cor}^n(c)(g_1,\dots,g_{-n-1})$ for $n<0$ is
\[
\begin{cases}
	c(g_1,\dots,g_{-n-1})&\tx{if}\quad g_1,\dots,g_{-n-1} \in \Delta,\\
	0                            &\tx{else}.
\end{cases}	
\]

Let $\G$ be the  finite  cyclic group  $\Gamma=\Z/N\Z$.
For a $\Gamma$-module $M$
 we compute the periodicity isomorphism
 $$H^{0}(\Gamma,M) \to H^{-2}(\Gamma,M)$$
 in terms of inhomogeneous cocycles.
 We represent the elements of $\G$ as natural numbers $0 \leq a < N$. 	
 Recall that $H^2(\Gamma,\Z)$ is cyclic of order $N$ with generator represented
 by the inhomogeneous $2$-cocycle
\[ z(a,b) = \begin{cases} 0,&a+b < N\\ 1,&a+b \geq N; \end{cases} \]
see \cite[p.~102, above Proposition III.1.9]{MilneCFT}.
Recall further that cup product with the class of $z$
induces an isomorphism $H^n(\Gamma,M) \to H^{n+2}(\Gamma,M)$ for any $n \in \Z$;
see \cite[Chapter IV, \S8, Theorem 5]{CasFro86}.

\begin{lem}
For any $m \in M^\Gamma$, the function
\[ c_m(a) = \begin{cases} m,& a=N-1\\ 0,& a \neq N-1\end{cases} \]
is an  inhomogeneous $(-2)$-cocycle, and the map
\[[m] \mapsto [c_m]\colon\, H^0(\G,M)\to H^{-2}(\G,M)\]
 is the inverse of the isomorphism $H^{-2}(\Gamma,M) \to H^0(\Gamma,M)$
  given by cup product with $[z]$.
\end{lem}

\begin{proof}
The fact that $c_m$ is a cocycle is verified immediately using the formula for $d^{-1}$ above.
Using the formula for the cup product in degrees $(2) \times (2)$ from \cite[Chapter IV, \S7]{CasFro86}
and translating it to inhomogeneous cocycles using the formulas above, we see that
for any inhomogeneous $(-2)$-cocycle $c : \Gamma \to M$ we have
\[ [z] \cup [c] = \left[\sum_{a,b \in \Z/N\Z} z(a,b) \cdot {^{(a+b)}}c(-b)\right]. \]
Applying this to the particular $z$ and $c$ above, we see that the only non-zero summand
in the sum is the one for $b=1$ and $a=N-1$, whose value is $^{(a+b)}m=m$.
\end{proof}

\chapter{A computer computation}
\label{app:computer}

Following a suggestion of an anonymous referee, in the arXiv version
we give  the listing of the GAP program {\sf Computing-Sha.g}
and the corresponding output file.
We used them in the calculation of $\Sha^1(F,G)$ in Section \ref{ss:computer}.
\bigskip

\centerline{\bf The GAP program  \sf Computing-Sha.g}

{\small

\begin{lstlisting}

# Computing Sha in GAP

PPrint:=function(str,mat)  #Printing a matrix
	local j;
	
	Print("\n",str,"\n");
	
	for j in [1..Length(mat)] do
		Print( mat[j], ",\n" );
	od;
	
	Print("\n");
	
end;

#############################################

actS:=function(g,s)
# A certain action of g=3,5,7  on  a vector s of length 6
# We consider the group G=(Z/8Z)^*=[1,3,5,7] and its action on the set S_E=[1..6]
    #with three orbits [1,2], [3,4], [5,6].
# When acting on [1,2] the stabilizer is [1,3],
# when acting on [3,4] the stabilizer is [1,5],
# when acting on [5,6] the stabilizer is [1,7].
# We consider the action of g in G on s in M[S_E] where M=Z
    #(later it will be Z/8Z).
local sr;
	sr:=[0,0, 0,0, 0,0]; # Constructing a vector of length 6

	if g=3 then
		sr[1]:=g*s[1]; sr[2]:=g*s[2];
		sr[3]:=g*s[4]; sr[4]:=g*s[3];
		sr[5]:=g*s[6]; sr[6]:=g*s[5];
	fi;		

	if g=5 then
		sr[1]:=g*s[2]; sr[2]:=g*s[1];
		sr[3]:=g*s[3]; sr[4]:=g*s[4];
		sr[5]:=g*s[6]; sr[6]:=g*s[5];
	fi;

	if g=7 then
		sr[1]:=g*s[2]; sr[2]:=g*s[1];
		sr[3]:=g*s[4]; sr[4]:=g*s[3];
		sr[5]:=g*s[5]; sr[6]:=g*s[6];
	fi;		

return sr;
end;

####################################################
####################################################

#Write to output file:
LogTo("Out-Computing-Sha.txt");

Print("The output file for Computing-Sha\n\n");

#We consider M[S_E]=\{ \sum_{s \in S_E} m_s | s \in S_E, m_s \in M\}
# and M[S_E]_0= \{ \sum_{s \in S_E} m_s \cdot s \in M[S_E]| \sum_s m_s =0 \}


############################ Computing a basis b of M[S_E]_0
# and a set d of generators of coboundaries in M[S_E]_0

b:=[];
d:=[];

for i in [1..5] do

	sd:=[0,0, 0,0, 0,0]; # Constructing a null vector of length 6
	sd[i]:=1;
	sd[6]:=-1; # a basis vector of M[S_E]_0
	
	Add( b, sd );  # b is a basis of M[S_E]_0 of cardinality 5
	
	for g in [3,5,7] do
		Add( d, actS(g,sd)-sd ) ;  # Coboundaries g*sd - sd:
		                           # 15 vectors of length 6
	od;
		
od;

PPrint("b", b);

PPrint("d", d);

############# Computing a set dD  of generators of coboundaries in M[S_E]

dD:=[];

for i in [1..6] do

	sd:=[0,0, 0,0, 0,0]; # Constructing a  null vector of length 6
	sd[i]:=1; # a basis vector of M[S_E]
	
	for g in [3,5,7] do
		Add( dD, actS(g,sd)-sd) ;  # Coboundaries g*sd - sd:
		                           # 18 vectors of length 6
	od;
	
od;

PPrint("dD",dD);

############################# Computing the Tate-Shafarevich kernel Sha

Print("\n");

d8b:=Concatenation( d, 8*b ); #We calculate modulo 8
K_b_d8b:=ComplementIntMat(b,d8b); # See the description of this function on the Net:
                                  # Google gap ComplementIntMat
Print("Complement b,d8b moduli:  ", K_b_d8b.moduli, "\n");
PPrint("Complement b,d8b sub:\n", K_b_d8b.sub );

#################################


K2:=[K_b_d8b.sub[4]/2,K_b_d8b.sub[5]/8];
PPrint("K2: generators of the quotient group <b>/<d,8*b>\n", K2);

KD8:=Concatenation( K2, dD, 8*IdentityMat(6));
KSha0:=NullspaceIntMat(KD8);

KSha:=[];
for ss in KSha0 do
	sh:=[0,0]; # null vector of length 2
	for j in [1..2] do	
		sh[j]:=ss[j];
	od;
	Add(KSha,sh);
od; # We cut the first two columns

KShaEZ:=Filtered(KSha,x->x<>0*x);  # We erase the zero rows
PPrint( "KShaEZ:\n", KShaEZ );	

K2ker:=KShaEZ*K2;
Print( "K2ker: the kernel of the homomorphism \n          ");
Print(" (M[S_E]_0)_\Gamma -> (M[S_E])_\Gamma");
PPrint(" ",  K2ker );
Print("Comparing with K2, we see that Sha^1(F,G) is of order 2\n");
Print("with generator K2ker[2]=", K2ker[2],".\n\n");

####################################
Print("#############################\n\n");

Print("Looking for a nice representative of the nontrivial element of Sha. \n\n");

dD8:=Concatenation( 8*IdentityMat(6), dD );
d8b:=Concatenation( 8*b, d );

v:=[4, 0, -4, -0, 0, 0,];
Print("We check the vector v:=[4,0,-4,0,0,0] in M[S_E]_0, \n");
Print("which is equivalent to K2ker[2]:\n");
Print("if w=[4,0,0,0,0,-4], then v equiv K2ker[2]+5*w-w mod 8.\n");
Res:=SolutionIntMat(d8b,v);
Print("\nIn M[S_E]_0: v=", v,"\nResult: ", Res,"\n");
Print("Since the result is `fail', our vector v is not in <d,8b>, \n");
Print("and hence its class [v] is not 0 in (M[S_E]_0)_Gamma. \n");

ResD:=SolutionIntMat(dD8,v);
Print("\nIn M[S_E]: v=", v,"\nResult:\n", ResD,"\n");
Print("Since the result is NOT`fail', our vector v is in <dD,8*id>, \n");
Print("and hence its class [v] is in Sha. \n");

Print("\nThe results show that, as expected, our vector v is a representative \n");
Print("of the only nontrivial element of \n");
Print("        Sha^1(F,G) = ker[ (M[S_E]_0)_\Gamma --> (M[S_E])_Gamma ].\n");

LogTo();

\end{lstlisting}
}


\bigskip\bigskip

\centerline{\bf The output file for the GAP program {\sf  Computing-Sha.g}}

{\small

\begin{lstlisting}
b
[ 1, 0, 0, 0, 0, -1 ],
[ 0, 1, 0, 0, 0, -1 ],
[ 0, 0, 1, 0, 0, -1 ],
[ 0, 0, 0, 1, 0, -1 ],
[ 0, 0, 0, 0, 1, -1 ],


d
[ 2, 0, 0, 0, -3, 1 ],
[ -1, 5, 0, 0, -5, 1 ],
[ -1, 7, 0, 0, 0, -6 ],
[ 0, 2, 0, 0, -3, 1 ],
[ 5, -1, 0, 0, -5, 1 ],
[ 7, -1, 0, 0, 0, -6 ],
[ 0, 0, -1, 3, -3, 1 ],
[ 0, 0, 4, 0, -5, 1 ],
[ 0, 0, -1, 7, 0, -6 ],
[ 0, 0, 3, -1, -3, 1 ],
[ 0, 0, 0, 4, -5, 1 ],
[ 0, 0, 7, -1, 0, -6 ],
[ 0, 0, 0, 0, -4, 4 ],
[ 0, 0, 0, 0, -6, 6 ],
[ 0, 0, 0, 0, 6, -6 ],


dD
[ 2, 0, 0, 0, 0, 0 ],
[ -1, 5, 0, 0, 0, 0 ],
[ -1, 7, 0, 0, 0, 0 ],
[ 0, 2, 0, 0, 0, 0 ],
[ 5, -1, 0, 0, 0, 0 ],
[ 7, -1, 0, 0, 0, 0 ],
[ 0, 0, -1, 3, 0, 0 ],
[ 0, 0, 4, 0, 0, 0 ],
[ 0, 0, -1, 7, 0, 0 ],
[ 0, 0, 3, -1, 0, 0 ],
[ 0, 0, 0, 4, 0, 0 ],
[ 0, 0, 7, -1, 0, 0 ],
[ 0, 0, 0, 0, -1, 3 ],
[ 0, 0, 0, 0, -1, 5 ],
[ 0, 0, 0, 0, 6, 0 ],
[ 0, 0, 0, 0, 3, -1 ],
[ 0, 0, 0, 0, 5, -1 ],
[ 0, 0, 0, 0, 0, 6 ],


Complement b,d8b moduli:  [ 1, 1, 1, 2, 8 ]

Complement b,d8b sub:

[ 1, 7, -7, 1, 7, -9 ],
[ 0, 4, -3, 1, 3, -5 ],
[ 0, 4, -4, 0, 5, -5 ],
[ 0, 2, -2, 2, 2, -4 ],
[ 0, 8, -8, 0, 8, -8 ],


K2: generators of the quotient group <b>/<d,8*b>

[ 0, 1, -1, 1, 1, -2 ],
[ 0, 1, -1, 0, 1, -1 ],


KShaEZ:

[ 2, 0 ],
[ 0, 4 ],

K2ker: the kernel of the homomorphism
           (M[S_E]_0)_Gamma -> (M[S_E])_Gamma

[ 0, 2, -2, 2, 2, -4 ],
[ 0, 4, -4, 0, 4, -4 ],

Comparing with K2, we see that Sha^1(F,G) is of order 2
with generator K2ker[2]=[ 0, 4, -4, 0, 4, -4 ].

#############################

Looking for a nice representative of the nontrivial element of Sha.

We check the vector v:=[4,0,-4,0,0,0] in M[S_E]_0,
which is equivalent to K2ker[2]:
if w=[4,0,0,0,0,-4], then v equiv K2ker[2]+5*w-w mod 8.

In M[S_E]_0: v=[ 4, 0, -4, 0, 0, 0 ]
Result: fail
Since the result is `fail', our vector v is not in <d,8b>,
and hence its class [v] is not 0 in (M[S_E]_0)_Gamma.

In M[S_E]: v=[ 4, 0, -4, 0, 0, 0 ]
Result:
[ 101454990022566097934, -4439323767294845870, 11542241794966599262,
  7694827863311066175, -28214368832140575975, 188095792214270506500,
  -402268501076428515038, 7102918027671753392, 0, 0, 0, 0,
  -20519540968829509800, -28214368832140575975, 0, 0, 0, 0,
  188095792214270506500, -413810742871395114300, 0, 0, 0, 0 ]
Since the result is NOT`fail', our vector v is in <dD,8*id>,
and hence its class [v] is in Sha.

The results show that, as expected, our vector v is a representative
of the only nontrivial element of
        Sha^1(F,G) = ker[ (M[S_E]_0)_Gamma --> (M[S_E])_Gamma ].


\end{lstlisting}
}



\providecommand{\bysame}{\leavevmode\hbox to3em{\hrulefill}\thinspace}
\providecommand{\MR}{\relax\ifhmode\unskip\space\fi MR }
\providecommand{\MRhref}[2]{%
  \href{http://www.ams.org/mathscinet-getitem?mr=#1}{#2}
}
\providecommand{\href}[2]{#2}

\end{document}